\newtheorem{theorem}{Theorem}[section]
\newtheorem{remark}[theorem]{Remark}
\newtheorem{corollary}[theorem]{Corollary}
\newtheorem{lemma}[theorem]{Lemma}
\newtheorem{assumption}[theorem]{Assumption}
\newtheorem{definition}[theorem]{Definition}
\begin{document}

\title[Time-fractional Fokker--Planck equation with general forcing]{Existence, 
uniqueness and regularity of the\\ solution of the 
time-fractional Fokker--Planck equation\\ with general forcing}

\footnote{Kim-Ngan Le: School of Mathematical Sciences, Monash University, VIC 3800, Australia. \\
William McLean: School of Mathematics and Statistics, The University of New South Wales, Sydney 2052, Australia.\\
Martin Stynes: Applied and Computational Mathematics Division, Beijing 
Computational Science Research Center, Beijing 100193, China.}

\author{Kim-Ngan Le}
\address{School of Mathematical Sciences, Monash University, VIC 3800, Australia. ORCID 0000-0002-7628-9379. The research of this author is supported in part by  the Australian Government through the Australian Research Councils Discovery Projects funding scheme (project number DP170100605).}
\email{ngan.le@monash.edu}

\author{William McLean }
\address{School of Mathematics and Statistics. 
The University of New South Wales, Sydney 2052, Australia.   ORCID 0000-0002-7133-2884.}
\email{w.mclean@unsw.edu.au}

\author{Martin Stynes}
\address{Applied and Computational Mathematics Division, Beijing 
Computational Science Research Center, Beijing 100193, China. ORCID 0000-0003-2085-7354.   The research of this author is supported in part by the National Natural Science Foundation of China under grants 91430216 and NSAF-U1530401. Corresponding author; telephone +86-188-0011-8417.}
\email{m.stynes@csrc.ac.cn}

\begin{abstract}
A time-fractional Fokker--Planck initial-boundary value problem is considered, 
with  differential operator 
$u_t-\nabla\cdot(\partial_t^{1-\alpha}\kappa_\alpha\nabla u 
-\textbf{F}\partial_t^{1-\alpha}u)$, where $0<\alpha <1$. The forcing function 
$\textbf{F} = \textbf{F}(t,x)$, which is more difficult to analyse than the case 
$\textbf{F}=\textbf{F}(x)$ investigated previously by other authors. The 
spatial domain $\Omega \subset\mathbb{R}^d$, where  $d\ge 1$, has a smooth
boundary. Existence, uniqueness and regularity of a mild solution $u$ is proved 
under the hypothesis that the initial data $u_0$ lies in $L^2(\Omega)$. For 
$1/2<\alpha<1$ and $u_0\in H^2(\Omega)\cap H_0^1(\Omega)$, it is shown that $u$ 
becomes a classical solution of the problem. Estimates of time derivatives of 
the classical solution are derived---these are known to be needed in numerical 
analyses of this problem. 
\end{abstract}

\noindent\emph{Keywords:}
Fokker-Planck equation, Riemann--Liouville derivative, time-fractional, 
regularity of solution.

\noindent\emph{MSC 2010 Classification:} 35R11.
			
	
\maketitle

\section{Introduction}\label{sec:intro}
In this paper, we study the existence, uniqueness and regularity of 
solutions to the following inhomogeneous, time-fractional Fokker--Planck 
initial-boundary value problem:
\begin{subequations}\label{prob}
\begin{align}
u_t(t,x)-\nabla\cdot(\partial_t^{1-\alpha}\kappa_\alpha\nabla u
    -\textbf{F}\partial_t^{1-\alpha}u)(t,x)&=g(t,x) \ 
\text {for }(t,x)\in (0,T)\times\Omega,  \label{proba} \\
u(0,x)&=u_0(x)\ \text{ for } x\in\Omega,  \label{probb} \\
u(t,x)&=0 \ \text{ for } x\in\partial\Omega \text{ and } 0<t<T, \label{probc}
\end{align}
\end{subequations}
where $\kappa_\alpha>0$ is constant and 
 $\Omega$ is an open bounded domain with $C^2$ boundary in~$\mathbb{R}^d$ for some~$d\geq 1$.
In~\eqref{proba}, one has $0<\alpha<1$ and $\partial_t^{1-\alpha}$ is the 
standard Riemann--Liouville fractional derivative operator defined by
$\partial_t^{1-\alpha}u=(J^\alpha u)_t$, where $J^\beta$ denotes the
Riemann--Liouville fractional integral operator of order~$\beta$, viz.,
\[
J^{\beta} u=\int_0^t \omega_{\beta}(t-s)u(s)ds
    \ \text{ where }\ 
\omega_\beta(t):=\frac{t^{\beta-1}}{\Gamma(\beta)} \text{ for }\beta >0.
\]
Regularity hypotheses on $\textbf{F}, g$~and $u_0$ will be imposed later.

The problem~\eqref{prob} was considered in~\cite{HLeS17,LeMM16,PS17}. 
We describe it as ``general forcing" since $\textbf{F} = \textbf{F}(t,x)$;  
this is a more difficult problem than the special case where $\textbf{F} = 
\textbf{F}(x)$, which can be reduced to a problem already studied by several 
authors (see, e.g.,~\cite{EIDELMAN2004,LUCHKO2009,LUCHKO2010,MU2017,WANG2012}). 
More precisely, when the force~$\textbf{F}$ may depend on $t$ as well as $x$, 
equation~\eqref{prob} cannot be rewritten in the form of the fractional evolution equation
\begin{equation}\label{eq:fr sub}
J^{1-\alpha}(u_t) + A u 
= h(t,u,\nabla u, g,\textbf{F}),
\end{equation}
in which the first term is a Caputo fractional derivative, the 
operator~$A=-\kappa_\alpha \Delta $, and the function $h$ does not depend 
explicitly on~$\partial_t^{1-\alpha}u$.

The regularity of the solution to the Cauchy problem for~\eqref{eq:fr sub} was 
studied in~\cite{EIDELMAN2004}; there a fundamental solution of that problem was 
constructed and investigated for a more general evolution equation where the 
operator $A$ in~\eqref{eq:fr sub} is a uniformly elliptic operator with variable 
coefficients that acts on the spatial variables. The Cauchy problem was also 
considered in~\cite{MU2017}  where~$h= h(t,u,g,\textbf{F})$ lies in a space of 
weighted H\"older continuous functions, and in~\cite{WANG2012} for the case 
where $A$~is almost sectorial. Existence and uniqueness of a solution to the 
initial-boundary value problem where \eqref{proba} is replaced by~\eqref{eq:fr 
sub} is shown in~\cite{LUCHKO2009,LUCHKO2010}. 

To the best of our knowledge, the well-posedness and regularity properties 
of solutions to~\eqref{prob} are open questions at present, apart from a 
recent preprint~\cite{MMAK2018} which treats a wider class of problems that 
includes \eqref{prob} as a special case.  The analysis in \cite{MMAK2018} 
proceeds along broadly similar lines to here---relying on Galerkin 
approximation, a fractional Gronwall inequality and compactness arguments---but 
employs a different sequence of \emph{a priori} estimates and does not make use 
of the weighted $L^2$-norm of Definition~\ref{def:weightnorm} or the 
Aubin--Lions--Simon lemma 
(Lemma~\ref{lem: Aubin}).  An interesting consequence of the approach taken here 
is that the constants in our estimates remain bounded as~$\alpha\to1$, which one 
expects since in this limit \eqref{prob} becomes the classical Fokker--Planck 
equation.  However, the estimates in sections \ref{sec:classical}~and 
\ref{sec:regclassical} are valid only for~$1/2<\alpha<1$, with constants that 
blow up as~$\alpha\to1/2$ (cf.~the comment following Assumption~\ref{ass:12}).  
By contrast, the results in~\cite{MMAK2018} hold for the full range of values 
$0<\alpha<1$, but with constants that blow up as~$\alpha\to1$.  Also, the 
analysis is significantly longer than the one presented here.

The main contributions of our work are:
\begin{itemize}
\item A proof in Theorem~\ref{the:exit0} of existence and uniqueness
of the mild solution of~\eqref{prob} for the case $\alpha\in (0,1)$ 
and $u_0\in L^2(\Omega)$;
\item By imposing a further condition on $u_0$ and restricting $\alpha$ to 
lie in $(1/2, 1)$, the mild solution becomes the classical solution 
of~\eqref{prob} described in Theorem~\ref{the:exit};
\item Estimates of time derivatives of the classical solution in 
Theorem~\ref{th: regu}.
\end{itemize}

The paper is organized as follows. Section~\ref{sec:definitions} introduces our 
basic notation and the definitions of mild and classical solutions 
of~\eqref{prob}. Various technical properties of fractional integral operators 
that will be used in our analysis are provided in 
Section~\ref{sec:preliminaries}. In Section~\ref{sec:Galerkin}, we introduce the 
Galerkin approximation of the solution of~\eqref{prob} and prove existence and 
uniqueness of approximate solutions.  Properties of the mild and classical 
solutions are derived in Sections~\ref{sec:mild} and~\ref{sec:classical}, 
respectively. Finally, in Section~\ref{sec:regclassical}, we provide estimates 
of the time derivatives of the classical solution in $L^2(\Omega)$~and 
$H^2(\Omega)$, needed for the error analysis of numerical methods for 
solving~\eqref{prob}; see, e.g.,~\cite{HLeS17,LeMM16,PS17}.

\section{Notation and definitions}\label{sec:definitions}
Throughout the paper, we often suppress the spatial variables and 
write $v$ or $v(t)$ instead of~$v(t,\cdot)$ for various functions~$v$.  We 
also use the notation $v'$ for the time derivative.
Let $\|\cdot\|$ denote the $L^2(\Omega)$ 
norm defined by $\|v\|^2=\langle v,v\rangle$, where $\langle \cdot, 
\cdot\rangle$ is the $L^2(\Omega)$ inner product. Let   $\|\cdot\|_{H^r(\Omega)}$ and 
$|\cdot|_{H^r(\Omega)}$  be the standard Sobolev norm and seminorm on the Hilbert space of 
functions whose $r$th-order derivatives lie in $L^2(\Omega)$. 
We borrow some standard notation from parabolic partial 
differential equations, e.g., $C([0,T]; L^2(\Omega))$.

Assume throughout the paper that the forcing function $\textbf{F} = (F_1, \dots, F_d)^T\in 
W^{1,\infty}((0,T)\times\Omega)$ and that  its divergence $\nabla\cdot\textbf{F}$ is continuous on 
$[0,T]\times\overline{\Omega}$. Then~\textbf{F} is continuous on 
$[0,T]\times\overline{\Omega}$ and we set
\[
\|\textbf{F}\|_\infty := \max_{1\leq i\leq d} 	\max_{(t,x)\in [0,T]\times\overline{\Omega}}     |F_i(t,x)|
\quad\text{and}\quad
\|\textbf{F}\|_{1,\infty} := \|\textbf{F}\|_\infty     +\max_{(x,t)\in [0,T]\times\overline{\Omega}}
	|\nabla\cdot\textbf{F}(t,x)|.
\]
Stronger assumptions on the regularity of \textbf{F} will be made in some sections.

We use~$C$ to denote a constant that  depends on the data  $\Omega, \kappa_\alpha, \textbf{F}$ 
and~$T$ of the problem~\eqref{prob} but is independent of any dimension of 
finite-dimensional spaces to be used in our Galerkin approximations. Here the 
unsubscripted 
constants $C$ are generic and can take different values in different places 
throughout the paper.

We now recall the definitions of some Banach spaces from \cite[p.301]{Evans10}:
\begin{definition}
Let $X$ be a real Banach space with norm $\|\cdot\|_X$. The space $C([0,T]; X)$ comprises all continuous functions $v:[0,T]\to X$ with
\[
\|v\|_{C([0,T]; X)} := \max_{0\le t\le T}\|v(t)\|_X.
\]
Let $p\in [1,\infty]$.
The space $L^p(0,T; X)$ comprises all measurable functions $v:[0,T]\to X$ for which 
\[
\|v\|_{L^p(0,T; X)} :=\begin{cases}
	\bigl(\int_0^T\|v(t)\|_X^p\,dt\bigr)^{1/p}<\infty
	&\text{when $1\le p<\infty$,} \\
	\operatorname{ess\, sup}_{0\le t\le T}\,\|v(t)\|_X< \infty 
    &\text{when $p = \infty$.}
    \end{cases}
\]
The space $W^{1,p}(0,T; X))$ comprises all measurable functions $v:[0,T]\to X$ for which 
\[
\|v\|_{W^{1,p}(0,T; X)} := \|v\|_{L^p(0,T; X)} + \|v'\|_{L^p(0,T; X)} \ \text{ is finite}.
\]
\end{definition}

Recall that $0<\alpha<1$.

\begin{definition}\label{def:weightnorm}
Given a  Banach space~$X$ with a norm (or seminorm) $\|\cdot\|_X$, define
$L^2_\alpha (0,T;X)$ to be the space of functions~$v:[0,T]\to X$ for which the 
following norm (or seminorm) is finite: 
\[
\|v\|_{L^2_\alpha (0,T; X)}  
	:= \max_{0\le t\le T} \left[ J^\alpha(\|v\|_X^2)(t) \right]^{1/2}
	= \max_{0\le t\le T} \left[  \frac1{\Gamma(\alpha)} \int_{s=0}^t (t-s)^{\alpha-1} \|v(s)\|_X^2  \, ds \right]^{1/2}.
\]
\end{definition}

For any Banach space $X$, clearly $L^2_\alpha (0,T;X) \subset L^2 (0,T;X)$ for $0<\alpha<1$ 
and $\|\cdot\|_{L^2_1 (0,T; X)} = \|\cdot\|_{L^2(0,T; X)}$ if we formally put $\alpha=1$ in Definition~\ref{def:weightnorm}.
For brevity, when $X=L^2(\Omega)$ we write
\[
\|v\|_{L_\alpha^2}=\|v\|_{L_\alpha^2(0,T;L^2(\Omega))}
\quad\text{and}\quad
\|v\|_{L^2}=\|v\|_{L^2(0,T;L^2(\Omega))}.
\]

The Mittag-Leffler function $E_{\alpha}(z)$ that is used in the  fractional Gronwall inequality of Lemma~\ref{lem:Gronwall} is defined by
\begin{equation*} 
E_{\alpha}(z):=\sum\limits_{k=0}^\infty \frac{z^k}{\Gamma(k\alpha+1)},
\end{equation*}
for $z\in \mathbb{R}$. Its properties  can be found in, e.g., \cite{Diet10}.

We now introduce the definitions of mild solutions and classical solutions to problem~\eqref{prob}. Set
\[
G(t):=u_0+\int_0^tg(s)\,ds \ \text{ for } 0\le t\le T.
\]

\begin{definition}[Mild solutions]\label{def:mild}
A mild solution of problem~\eqref{prob} is a function 
$u\in L^2(0,T;L^2(\Omega))$ such that
$J^\alpha u\in L^2\bigl(0,T;H^2(\Omega)\cap H^1_0(\Omega)\bigr)$ and $u$ 
satisfies 
\begin{equation}\label{prob0}
u-\kappa_\alpha \Delta(J^{\alpha} u)+\nabla\cdot(\mathbf{F}J^{\alpha}u)
-\nabla\cdot\biggl(\int_0^t \mathbf{F}'(s)J^{\alpha}u(s)\,ds\biggr)=G(t) \ \text{ a.e. on } (0,T)\times \Omega.
\end{equation}

\end{definition}

\begin{definition}[Classical solutions]\label{def:class}
A classical solution of problem~\eqref{prob} is a function 
$u$ belonging to the space $C([0,T];L^2(\Omega))\cap 
L^\infty(0,T;H^1_0(\Omega))\cap L^2(0,T;H^2(\Omega))$ such that
\[
 u'\in L^2(0,T;L^2(\Omega))\quad\text{and}\quad
\partial_t^{1-\alpha} u\in L^2(0,T;H^2(\Omega)),
\]
with $u$ satisfying~\eqref{proba} a.e. on $(0,T)\times \Omega$, and 
\eqref{probb} a.e. on~$\Omega$.
\end{definition}

\section{Technical preliminaries}\label{sec:preliminaries}
This section provides some  properties of fractional integrals that will be needed in our analysis. 

\begin{lemma}\label{lem:Gronwall} 
 \cite[Corollary 2]{YGD07}
Let $\beta>0$.  Assume that $a$~and $b$ are non-negative and non-decreasing 
functions on the interval~$[0,T]$, with $a\in L^1(0,T)$~and $b\in C[0,T]$. 
If $y\in L^1(0,T)$ satisfies
\[
0\le y(t)\le a(t)+b(t)\int_0^t\omega_\beta(t-s)y(s)\,ds
	\quad\text{for $0\le t\le T$,}
\]
then
\[
y(t)\le a(t)E_\beta\bigl(b(t)t^\beta\bigr)
\quad\text{for $0\le t\le T$.}
\] 
\end{lemma}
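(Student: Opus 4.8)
The plan is to establish the bound by a Picard-type iteration of the integral inequality, exploiting the semigroup identity for the Riemann--Liouville kernels, $\omega_\beta\ast\omega_\gamma=\omega_{\beta+\gamma}$ (an immediate consequence of the Beta-function formula). Introduce the positivity-preserving linear operator
\[
(Bz)(t):=b(t)\int_0^t\omega_\beta(t-s)\,z(s)\,ds,
\]
so that the hypothesis reads $0\le y\le a+By$ on $[0,T]$. Because $y\in L^1(0,T)$, $b$ is bounded, and $\omega_{k\beta}\in L^1(0,T)$ for every $k\ge1$, all the iterates $B^ka$ and $B^ny$ are well defined, finite a.e.\ and integrable on $[0,T]$; hence substituting the inequality into itself — legitimate since $B$ is linear and positivity-preserving — gives, for every $n\ge1$,
\[
y(t)\le\sum_{k=0}^{n-1}(B^ka)(t)+(B^ny)(t)\qquad\text{for }0\le t\le T.
\]

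The crux is the pointwise estimate
\[
(B^kz)(t)\le b(t)^k\int_0^t\omega_{k\beta}(t-s)\,z(s)\,ds\qquad(k\ge1),
\]
valid for any non-negative $z\in L^1(0,T)$. I would prove it by induction on $k$, the case $k=1$ being the definition of $B$. For the step from $k$ to $k+1$ one writes $(B^{k+1}z)(t)=b(t)\int_0^t\omega_\beta(t-s)(B^kz)(s)\,ds$, inserts the inductive hypothesis, uses that $b$ is non-decreasing to replace $b(s)^k$ by $b(t)^k$ for $s\le t$, applies Fubini--Tonelli to interchange the two resulting integrations, and concludes with $\omega_\beta\ast\omega_{k\beta}=\omega_{(k+1)\beta}$. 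Taking $z=a$ and using that $a$ is non-decreasing, so that $a(s)\le a(t)$ for $s\le t$, yields
\[
(B^ka)(t)\le a(t)\,b(t)^k\int_0^t\omega_{k\beta}(t-s)\,ds=a(t)\,\frac{\bigl(b(t)t^\beta\bigr)^k}{\Gamma(k\beta+1)},
\]
which also holds trivially for $k=0$. Summing over $k$ shows $\sum_{k=0}^{n-1}(B^ka)(t)\to a(t)\,E_\beta\!\bigl(b(t)t^\beta\bigr)$ as $n\to\infty$.

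It remains to dispose of the remainder term. Applying the key estimate with $z=y$, bounding $b(t)\le b(T)$, and noting that once $n$ is large enough that $n\beta\ge1$ one has $\omega_{n\beta}(t-s)\le T^{n\beta-1}/\Gamma(n\beta)$ for $0\le s\le t\le T$, we obtain
\[
0\le(B^ny)(t)\le\frac{\bigl(b(T)T^\beta\bigr)^n}{T\,\Gamma(n\beta)}\,\|y\|_{L^1(0,T)}\longrightarrow0\qquad(n\to\infty),
\]
since $\Gamma(n\beta)$ grows faster than any geometric sequence. Letting $n\to\infty$ in the iterated inequality gives $y(t)\le a(t)\,E_\beta\!\bigl(b(t)t^\beta\bigr)$, as asserted. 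The step I expect to need the most care is the inductive estimate for $B^kz$: one must justify the interchange of the nested integrals — Fubini--Tonelli applies since the integrands are non-negative and the kernels are integrable — and the argument genuinely relies on the \emph{monotonicity} of $a$ and $b$ at exactly the two places flagged above, namely pulling $b(t)^k$ out of the nested convolutions and replacing $a(s)$ by $a(t)$. Without those hypotheses one would only recover a bound in terms of $\sup_{[0,t]}$ of the data, which is weaker than the pointwise conclusion stated.
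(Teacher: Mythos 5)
Your proof is correct, and it is essentially the standard successive-substitution argument of Ye, Gao and Ding; the paper itself gives no proof of this lemma, simply citing \cite[Corollary 2]{YGD07}, whose proof proceeds exactly as you describe (iterate the inequality, bound $B^k a$ via the semigroup identity $\omega_\beta\ast\omega_{k\beta}=\omega_{(k+1)\beta}$ together with the monotonicity of $a$ and $b$, and kill the remainder using the growth of $\Gamma(n\beta)$). No gaps.
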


The following lemmas will be used several times in in our analysis. 

\begin{lemma}\cite[Lemma 2.2]{HLeS17} \label{lem:pd}
If $\alpha\in (1/2,1)$~and  $v(x,\cdot)\in L^2(0,T)$ for each 
$x\in\Omega$, then for $t\in [0,T]$,
\[
J^\alpha \langle J^\alpha v,v\rangle(t)
\geq \frac 1 2\|J^\alpha v(t)\|^2
\quad\text{and}\quad
\int_0^t\langle J^\alpha v,v\rangle(s)\,ds
\geq \frac 1 2J^{1-\alpha}(\|J^\alpha v\|^2)(t).
\]
\end{lemma}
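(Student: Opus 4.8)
The plan is to derive both inequalities from a single positivity property of the fractional integral operator $J^\alpha$, namely that its quadratic form is non-negative: for $\alpha\in(1/2,1)$ and $w\in L^2(0,T)$ one has $\int_0^T w(t)\,J^\alpha w(t)\,dt\ge 0$, and more locally that the kernel $\omega_\alpha$ is positive definite in the sense required. The standard route is via the Fourier/Laplace representation of $\omega_\alpha$: since $\widehat{\omega_\alpha}(\xi)=(i\xi)^{-\alpha}$ has non-negative real part precisely when $|\alpha|\le 1$, convolution with $\omega_\alpha$ is a positive operator, and for $1/2<\alpha<1$ one in fact gets the sharper ``square-root splitting'' $J^\alpha = J^{\alpha/2}\circ J^{\alpha/2}$ with $J^{\alpha/2}$ bounded on $L^2$. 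First I would fix $x\in\Omega$, write $v=v(x,\cdot)$, and set $w=J^\alpha v$, so that $v = \partial_t^{1-\alpha}w = (J^\alpha w)'$ in the appropriate weak sense; the pairing $\langle J^\alpha v,v\rangle$ at the spatial level is then an integral over $\Omega$ of the scalar quantity $J^\alpha v(x,t)\,v(x,t)$, and by Fubini it suffices to prove both scalar inequalities for each fixed $x$ and then integrate over $\Omega$.

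For the \textbf{first inequality}, I would apply the operator $J^\alpha$ to the scalar product $J^\alpha v(t)\,v(t)$ and aim to show $J^\alpha\bigl(J^\alpha v\cdot v\bigr)(t)\ge \tfrac12 (J^\alpha v(t))^2$. Writing $w=J^\alpha v$ so that $v=w'$ (with $w(0)=0$), the left side becomes $J^\alpha(w\,w')(t) = J^\alpha\bigl(\tfrac12 (w^2)'\bigr)(t)$. Integrating by parts in the convolution $\int_0^t \omega_\alpha(t-s)\tfrac12 (w^2)'(s)\,ds$ and using $w(0)=0$, one gets $\tfrac12\bigl[\omega_\alpha(0^+)\text{-term}\bigr]$ plus $\tfrac12\int_0^t \omega_\alpha'(t-s)w(s)^2\,ds$; since $\alpha<1$ the kernel $\omega_\alpha$ is singular at $0$, so this formal integration by parts must be handled carefully, e.g.\ by first writing $J^\alpha(w\,w') = \bigl(J^\alpha(w^2/2)\bigr)' - \omega_\alpha(t)w(0)^2/2 = \partial_t^{1-\alpha}(w^2/2)$ and invoking the known coercivity-type identity $\partial_t^{1-\alpha}(w^2)\ge 2 w\,\partial_t^{1-\alpha}w$, or more directly the Alikhanov-type pointwise estimate. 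Actually the cleanest path is: $\tfrac12 w(t)^2 = \tfrac12\bigl(J^{1-\alpha}\circ J^\alpha\bigr)\bigl((w^2)'\bigr)\cdot\text{(...)}$; rather, I would cite/reprove the elementary fact that for $0<\alpha<1$, $2w(t)\,\partial_t^{1-\alpha}w(t)\ge \partial_t^{1-\alpha}\bigl(w^2\bigr)(t)$ does \emph{not} hold pointwise, but its $J^\alpha$-average does—precisely what is claimed. So I would instead prove it by the symmetric bilinear trick: expand $J^\alpha(J^\alpha v\cdot v)(t)$ as a double integral $\iint \omega_\alpha(t-s)\omega_\alpha(s-\tau)v(\tau)v(s)\,d\tau\,ds$, symmetrize in the two copies of $v$, and compare with $(J^\alpha v(t))^2 = \iint\omega_\alpha(t-s)\omega_\alpha(t-\tau)v(s)v(\tau)\,ds\,d\tau$; the difference is a quadratic form whose kernel $\int_{\max(s,\tau)}^t[\cdots]$ can be shown to be non-negative using the semigroup property $J^\alpha\ast$(something)$=$(something) and positivity of $\omega_\alpha$. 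The \textbf{second inequality} follows similarly, integrating the scalar inequality $\int_0^t J^\alpha v(s)\,v(s)\,ds\ge\tfrac12 J^{1-\alpha}\bigl((J^\alpha v)^2\bigr)(t)$: again set $w=J^\alpha v$, note $\int_0^t w(s)w'(s)\,ds=\tfrac12 w(t)^2$ trivially, and observe $J^{1-\alpha}(w^2)(t)\le \text{something}$; in fact the desired bound is $\tfrac12 w(t)^2\ge\tfrac12 J^{1-\alpha}(w^2)(t)$ would be false, so one instead uses that $\int_0^t\langle J^\alpha v,v\rangle\,ds = \int_0^t \langle w,w'\rangle\,ds$ and applies $J^{1-\alpha}$ relationships carefully; the honest statement is obtained by applying $J^{1-\alpha}$ to the first inequality and using $J^{1-\alpha}J^\alpha = J^1 = \int_0^t$.

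The \textbf{main obstacle} I anticipate is justifying the formal integration-by-parts / operator manipulations at the singular endpoint $s=0$ of the convolution kernel $\omega_\alpha$, and correctly tracking which combination of $J^\alpha$, $J^{1-\alpha}$, and $\partial_t^{1-\alpha}$ produces a genuinely non-negative quadratic form—this is exactly where the hypothesis $\alpha>1/2$ is used (so that $J^{\alpha/2}$ is $L^2$-bounded and the splitting $J^\alpha=J^{\alpha/2}J^{\alpha/2}$ is available, giving $\langle J^\alpha v,v\rangle = \langle J^{\alpha/2}v, J^{\alpha/2}v\rangle\ge 0$ after accounting for the adjoint $ (J^{\alpha/2})^* $ being a reflected fractional integral). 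Since this lemma is quoted verbatim from \cite[Lemma 2.2]{HLeS17}, in practice I would simply cite that reference; but the self-contained argument sketched above—scalar reduction by Fubini over $\Omega$, substitution $w=J^\alpha v$, and the positive-definiteness of the fractional-integral quadratic form for $\alpha\in(1/2,1)$—is the proof I would write if forced to give one.
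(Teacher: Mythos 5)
The paper offers no proof of this lemma---it is quoted directly from \cite[Lemma 2.2]{HLeS17}---so your proposal has to stand on its own, and as written it has genuine gaps. The most serious is the substitution error at the heart of your argument: if $w=J^\alpha v$ then $v$ is \emph{not} $w'$ and not $\partial_t^{1-\alpha}w=(J^\alpha w)'$; rather $v=(J^{1-\alpha}w)'=:\partial_t^\alpha w$, the Riemann--Liouville derivative of order $\alpha$ of~$w$ (which equals the Caputo derivative $J^{1-\alpha}(w')$ when $w$ is absolutely continuous, since $w(0)=0$). Consequently the reduction of $J^\alpha\langle J^\alpha v,v\rangle$ to $J^\alpha(ww')=J^\alpha\bigl(\tfrac12(w^2)'\bigr)$ is false, and everything built on it collapses. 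Your fallback, the ``symmetric bilinear trick'', writes $J^\alpha\bigl((J^\alpha v)v\bigr)(t)-\tfrac12(J^\alpha v)(t)^2$ as a quadratic form with kernel $\omega_\alpha(t-s)\bigl[\omega_\alpha(s-\tau)-\omega_\alpha(t-\tau)\bigr]$ on $\{\tau<s\}$; that kernel is pointwise non-negative, but pointwise non-negativity of a kernel does not make its quadratic form positive semi-definite, and you give no argument for the latter---that positivity \emph{is} the lemma, so nothing has been proved. Your account of where $\alpha>1/2$ enters is also wrong: $\langle J^\alpha v,v\rangle_{L^2(0,t)}=\langle J^{\alpha/2}v,(J^{\alpha/2})^*v\rangle$ with $(J^{\alpha/2})^*$ the \emph{right-sided} fractional integral, not $\|J^{\alpha/2}v\|^2$, and in any case $\int_0^t\langle J^\alpha v,v\rangle\,ds\ge0$ holds for every $\alpha\in(0,1)$ (Lemma~\ref{lem:pd2}). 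The hypothesis $\alpha>1/2$ is needed because $\omega_\alpha\in L^2(0,t)$ exactly then, so $J^\alpha v(t)$ is well defined pointwise for $v\in L^2$ and both sides of the first inequality depend continuously on $v\in L^2(0,t)$.

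Two parts of your sketch are sound: the reduction to a scalar inequality for each fixed $x$ by Fubini, and the derivation of the second inequality from the first by applying the order-preserving operator $J^{1-\alpha}$ and using $J^{1-\alpha}J^\alpha=J^1$. To repair the first inequality, note that the pointwise Alikhanov-type estimate you dismissed is in fact true and is the key tool: for absolutely continuous $w$ with $w(0)=0$,
\begin{equation*}
2w(t)\,\partial_t^\alpha w(t)-\partial_t^\alpha(w^2)(t)
=\omega_{1-\alpha}(t)\,w(t)^2
+\frac{\alpha}{\Gamma(1-\alpha)}\int_0^t(t-s)^{-\alpha-1}\bigl(w(t)-w(s)\bigr)^2\,ds
\;\ge\;0,
\end{equation*}
obtained by writing the left-hand side as $-\int_0^t\omega_{1-\alpha}(t-s)\,\tfrac{d}{ds}\bigl(w(t)-w(s)\bigr)^2\,ds$ and integrating by parts. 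Taking $w=J^\alpha v$ for smooth $v$, so that $v=\partial_t^\alpha w$, applying $J^\alpha$ and using $J^\alpha\partial_t^\alpha(w^2)=w(t)^2$ gives $J^\alpha\bigl((J^\alpha v)v\bigr)(t)\ge\tfrac12(J^\alpha v)(t)^2$; a density argument in $L^2(0,t)$ (legitimate precisely because $\alpha>1/2$) yields the general case, and integration over $\Omega$ together with the $J^{1-\alpha}$ step finishes the proof.
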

%
\begin{lemma}\cite[Lemma 3.1 (ii)]{MS2014}\label{lem:pd2}
If $\alpha\in (0,1)$~and  $v(x,\cdot)\in L^2(0,T)$ for each $x\in\Omega$, 
then for $t\in [0,T]$,
\begin{equation*}
\int_0^t\langle J^\alpha v,v\rangle(s)\,ds
\geq \cos(\alpha\pi/2) \int_0^t\|J^{\alpha/2} v\|^2(s)\,ds.
\end{equation*}
\end{lemma}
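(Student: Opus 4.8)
The plan is to reduce the $L^2(\Omega)$-valued inequality to a purely scalar statement in the time variable and then to prove that scalar statement by Fourier analysis, using the semigroup identity $J^\alpha=J^{\alpha/2}J^{\alpha/2}$ together with the explicit Fourier symbol of the Riemann--Liouville kernel $\omega_\alpha$. Since $\langle J^\alpha v,v\rangle(s)=\int_\Omega (J^\alpha v)(s,x)\,v(s,x)\,dx$, Fubini's theorem reduces the claim to showing that for each $x\in\Omega$ and every $t\in[0,T]$,
\[
\int_0^t (J^\alpha v)(s,x)\,v(s,x)\,ds\;\ge\;\cos(\alpha\pi/2)\int_0^t\bigl|(J^{\alpha/2}v)(s,x)\bigr|^2\,ds ;
\]
integrating this over $\Omega$ then gives the lemma. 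The hypothesis $v(x,\cdot)\in L^2(0,T)$ makes $J^\alpha v(\cdot,x)$ and $J^{\alpha/2}v(\cdot,x)$ well defined pointwise in~$x$, and in the situations where the lemma is applied one has enough integrability in~$x$ to justify the interchange of integrals.

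For the scalar inequality, fix $x$ and $t$, and let $\tilde v\in L^2(\mathbb{R})$ be the extension of $s\mapsto v(s,x)$ by zero outside $[0,t]$, so that $\tilde v$ has compact support in $[0,\infty)$. Because $\omega_\beta$ is supported on $[0,\infty)$, the convolution $\omega_\beta*\tilde v$ coincides with $J^\beta v(\cdot,x)$ on $[0,t]$ and is supported on $[0,\infty)$, whence
\[
\int_0^t (J^\alpha v)(s,x)\,v(s,x)\,ds=\int_{\mathbb{R}}(\omega_\alpha*\tilde v)(s)\,\tilde v(s)\,ds
\quad\text{and}\quad
\int_0^t\bigl|(J^{\alpha/2}v)(s,x)\bigr|^2\,ds\le\|\omega_{\alpha/2}*\tilde v\|_{L^2(\mathbb{R})}^2 .
\]
With the convention $\widehat f(\xi)=\int_{\mathbb{R}}e^{-i\xi s}f(s)\,ds$, the Fourier symbol of $\omega_\beta$ is $(i\xi)^{-\beta}=|\xi|^{-\beta}e^{-i(\beta\pi/2)\operatorname{sgn}(\xi)}$, so $\operatorname{Re}\widehat{\omega_\alpha}(\xi)=\cos(\alpha\pi/2)\,|\xi|^{-\alpha}$ and $|\widehat{\omega_{\alpha/2}}(\xi)|^2=|\xi|^{-\alpha}$. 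Applying Plancherel's theorem, using that $\tilde v$ is real so that $|\widehat{\tilde v}|^2$ is even and the odd part $\operatorname{Im}\widehat{\omega_\alpha}$ does not contribute, and then Plancherel once more for $\omega_{\alpha/2}*\tilde v$, gives
\[
\int_{\mathbb{R}}(\omega_\alpha*\tilde v)\,\tilde v\,ds
=\frac{\cos(\alpha\pi/2)}{2\pi}\int_{\mathbb{R}}|\xi|^{-\alpha}\,|\widehat{\tilde v}(\xi)|^2\,d\xi
=\cos(\alpha\pi/2)\,\|\omega_{\alpha/2}*\tilde v\|_{L^2(\mathbb{R})}^2 ,
\]
and combining this with the two relations above yields the scalar inequality, since $\cos(\alpha\pi/2)>0$ for $0<\alpha<1$.

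The main obstacle is purely technical: $\omega_\alpha\notin L^1(\mathbb{R})\cup L^2(\mathbb{R})$, and $\omega_\alpha*\tilde v$ need not lie in $L^2(\mathbb{R})$ when $\alpha\ge 1/2$, so the Fourier identities above cannot be invoked literally. I would make them rigorous by a regularisation: replace $\omega_\beta$ by $\omega_\beta^\varepsilon(s):=e^{-\varepsilon s}\omega_\beta(s)$, which lies in $L^1(\mathbb{R})$ and has transform $(\varepsilon+i\xi)^{-\beta}$ with $\operatorname{Re}(\varepsilon+i\xi)^{-\alpha}>0$ and $|(\varepsilon+i\xi)^{-\beta}|\le|\xi|^{-\beta}$; carry out the Plancherel computations for the $\omega_\beta^\varepsilon$; and then let $\varepsilon\downarrow0$, using dominated convergence on $\int_0^t$ (where $\omega_\beta^\varepsilon*\tilde v$ is controlled by $(\omega_\beta\chi_{[0,t]})*|\tilde v|\in L^2(0,t)$), monotone convergence for $(\varepsilon^2+\xi^2)^{-\alpha/2}\uparrow|\xi|^{-\alpha}$, and Fatou's lemma to pass from $\|\omega_{\alpha/2}^\varepsilon*\tilde v\|_{L^2(\mathbb{R})}^2$ down to $\int_0^t|J^{\alpha/2}v(\cdot,x)|^2$; the integrability of $|\xi|^{-\alpha}|\widehat{\tilde v}(\xi)|^2$ near $\xi=0$ (where $\alpha<1$) and at infinity (where $\widehat{\tilde v}\in L^2$) is what keeps all the limits finite. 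Once this limiting argument is in place the rest is routine. This is the standard positivity/coercivity property of the Riemann--Liouville integral; it can be found in \cite[Lemma~3.1(ii)]{MS2014}.
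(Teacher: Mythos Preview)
The paper does not prove this lemma at all; it merely quotes it from \cite[Lemma~3.1(ii)]{MS2014}. Your Fourier-analytic argument is the standard route to this coercivity estimate and is correct: the reduction to the scalar inequality via Fubini, the zero-extension $\tilde v$, the identity $\operatorname{Re}\widehat{\omega_\alpha}(\xi)=\cos(\alpha\pi/2)\,|\widehat{\omega_{\alpha/2}}(\xi)|^2$, and the Plancherel computation are all sound, and your $e^{-\varepsilon s}$ regularisation (i.e.\ passing through the Laplace symbol $(\varepsilon+i\xi)^{-\beta}$ and letting $\varepsilon\downarrow0$) is exactly how one makes the formal symbol calculation rigorous when $\omega_\alpha\notin L^1\cup L^2$. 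The only cosmetic point is that the inequality $\int_0^t|J^{\alpha/2}v|^2\,ds\le\|\omega_{\alpha/2}*\tilde v\|_{L^2(\mathbb{R})}^2$ is in fact the only place where an inequality (rather than an equality) enters, so the argument in effect shows the sharper identity $\int_0^t(J^\alpha v)\,v\,ds=\cos(\alpha\pi/2)\,\|\omega_{\alpha/2}*\tilde v\|_{L^2(\mathbb{R})}^2$ before discarding the tail $s>t$; this is worth stating, since it explains why the constant $\cos(\alpha\pi/2)$ is optimal.
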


\begin{lemma}\cite[Lemma 2.1]{HLeS17}\label{lem:minsk}
Let $\beta\in(0,1)$. If $\phi(\cdot,t)\in  L^2(\Omega)$ for $t\in[0,T]$, then
\[
\|J^\beta \phi(t)\|^2\leq\omega_{\beta+1}(t)\,J^\beta(\|\phi\|^2)(t)
	\quad\text{for $0\le t \le T$.}\label{lem:stability1}
\]
\end{lemma}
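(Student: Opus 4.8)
The final statement is Lemma~\ref{lem:minsk}, which bounds $\|J^\beta\phi(t)\|^2$ in terms of $\omega_{\beta+1}(t)\,J^\beta(\|\phi\|^2)(t)$.

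\medskip

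The plan is to reduce the inequality to a pointwise-in-$x$ application of the Cauchy--Schwarz inequality followed by integration over~$\Omega$. First I would write out the definition of the fractional integral componentwise in the spatial variable: for fixed $x\in\Omega$ we have
\[
(J^\beta\phi)(t,x)=\frac1{\Gamma(\beta)}\int_0^t (t-s)^{\beta-1}\phi(s,x)\,ds
=\int_0^t \omega_\beta(t-s)\,\phi(s,x)\,ds.
\]
The key trick is to split the kernel weight as $\omega_\beta(t-s)=\omega_\beta(t-s)^{1/2}\cdot\omega_\beta(t-s)^{1/2}$ and apply the Cauchy--Schwarz inequality on $L^2(0,t)$ with respect to the measure $\omega_\beta(t-s)\,ds$; equivalently, one applies the weighted Cauchy--Schwarz inequality directly. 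This gives, for each fixed $x$,
\[
|(J^\beta\phi)(t,x)|^2
\le\left(\int_0^t\omega_\beta(t-s)\,ds\right)\left(\int_0^t\omega_\beta(t-s)\,|\phi(s,x)|^2\,ds\right).
\]

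\medskip

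The first factor is computed explicitly: $\int_0^t\omega_\beta(t-s)\,ds=\int_0^t\omega_\beta(\sigma)\,d\sigma=\int_0^t\frac{\sigma^{\beta-1}}{\Gamma(\beta)}\,d\sigma=\frac{t^\beta}{\beta\,\Gamma(\beta)}=\frac{t^\beta}{\Gamma(\beta+1)}=\omega_{\beta+1}(t)$, using $\beta\,\Gamma(\beta)=\Gamma(\beta+1)$. The second factor is exactly $J^\beta(|\phi(\cdot,x)|^2)(t)$ by definition. Hence
\[
|(J^\beta\phi)(t,x)|^2\le\omega_{\beta+1}(t)\,J^\beta\bigl(|\phi(\cdot,x)|^2\bigr)(t)
\quad\text{for each }x\in\Omega.
\]
Integrating this inequality over $x\in\Omega$ and using that $J^\beta$ (an integral in~$t$ with a nonnegative kernel) commutes with the spatial integral by Tonelli's theorem yields
\[
\|J^\beta\phi(t)\|^2=\int_\Omega|(J^\beta\phi)(t,x)|^2\,dx
\le\omega_{\beta+1}(t)\int_\Omega J^\beta\bigl(|\phi(\cdot,x)|^2\bigr)(t)\,dx
=\omega_{\beta+1}(t)\,J^\beta(\|\phi\|^2)(t),
\]
which is the claimed bound.

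\medskip

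There is essentially no serious obstacle here; the only points requiring a little care are the measurability/integrability bookkeeping needed to justify Tonelli's theorem and the finiteness of the quantities involved (so that the manipulations are not vacuous), which follow from the hypothesis $\phi(\cdot,t)\in L^2(\Omega)$ for $t\in[0,T]$ together with the local integrability of the kernel $\omega_\beta$ for $\beta\in(0,1)$. One should also note that the weighted Cauchy--Schwarz step is valid precisely because $\omega_\beta(t-s)\ge0$, so the kernel genuinely defines a (finite) measure on $(0,t)$; this is why the statement is restricted to $\beta>0$ (indeed $\beta\in(0,1)$).
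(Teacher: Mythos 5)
Your proposal is correct and follows essentially the same route as the paper: a weighted Cauchy--Schwarz inequality in the time variable (with respect to the measure $\omega_\beta(t-s)\,ds$), the explicit evaluation $\int_0^t\omega_\beta(t-s)\,ds=\omega_{\beta+1}(t)$, and an interchange of the $x$- and $s$-integrations. The paper performs Cauchy--Schwarz inside the integral over $\Omega$ rather than pointwise in $x$ first, but this is only a cosmetic reordering of the same argument.
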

\begin{proof}
As the proof is short, we give it here for completeness.
The Cauchy--Schwarz inequality yields
\begin{align*}
\|J^\beta \phi(t)\|^2&=\int_\Omega \bigg[\int_0^t \omega_\beta(t-s)
	\phi(x,s)ds\,\bigg]^2\,dx
	\leq\int_\Omega
	\bigg[\int_0^t\omega_\beta(t-s)\,ds\bigg] 
	\bigg[\int_0^t\omega_\beta(t-s)\phi^2(x,s)\,ds\bigg]\,dx\\
	&=\omega_{\beta+1}(t)\int_0^t\omega_\beta(t-s)
		\int_\Omega \phi^2(x,s)\,dx\,ds 
	=\omega_{\beta+1}(t)\,J^\beta(\|\phi\|^2)(t).
\end{align*}
\end{proof}
\begin{lemma}\label{lem:L1}
For any $t>0$ and $\beta>0$,
\[
\|J^\beta\phi(t)\|\leq \frac{t^\beta}{\Gamma(\beta +1)}
	\|\phi\|_{L^\infty(0,t;L^2)},
\quad\text{for all }\phi\in L^\infty(0,t;L^2).
\]
If $\beta>1/2$, then
\[
\|J^\beta\phi(t)\|\leq \frac{t^{\beta-1/2}}{\Gamma(\beta)\sqrt{2\beta-1}}
	\|\phi\|_{L^2(0,t;L^2)}, \quad\text{for all }\phi\in L^2(0,t;L^2).
\]
\end{lemma}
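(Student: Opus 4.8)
The plan is to bound the vector-valued convolution $J^\beta\phi(t)=\int_0^t\omega_\beta(t-s)\phi(s)\,ds$ directly, using the generalized Minkowski (triangle) inequality for $L^2(\Omega)$-valued integrals, which gives
\[
\|J^\beta\phi(t)\|\le\int_0^t\omega_\beta(t-s)\,\|\phi(s)\|\,ds .
\]
For the first estimate I would then simply pull $\|\phi\|_{L^\infty(0,t;L^2)}$ out of the remaining integral and evaluate $\int_0^t\omega_\beta(t-s)\,ds=\omega_{\beta+1}(t)=t^\beta/\Gamma(\beta+1)$, which requires only $\beta>0$.

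For the second estimate, starting from the same bound $\|J^\beta\phi(t)\|\le\int_0^t\omega_\beta(t-s)\|\phi(s)\|\,ds$, I would apply the Cauchy--Schwarz inequality in the time variable so as to split off the factor $\bigl(\int_0^t\|\phi(s)\|^2\,ds\bigr)^{1/2}=\|\phi\|_{L^2(0,t;L^2)}$, leaving $\bigl(\int_0^t\omega_\beta(t-s)^2\,ds\bigr)^{1/2}$. Since $\omega_\beta(t-s)^2=(t-s)^{2\beta-2}/\Gamma(\beta)^2$, a change of variable gives $\int_0^t\omega_\beta(t-s)^2\,ds=t^{2\beta-1}/\bigl((2\beta-1)\Gamma(\beta)^2\bigr)$, the exponent being integrable precisely because $\beta>1/2$; taking the square root yields the stated constant $t^{\beta-1/2}/\bigl(\Gamma(\beta)\sqrt{2\beta-1}\bigr)$.

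There is essentially no obstacle here. The only point needing a word of justification is the first inequality, i.e. that $\bigl\|\int_0^t\omega_\beta(t-s)\phi(s)\,ds\bigr\|\le\int_0^t\omega_\beta(t-s)\|\phi(s)\|\,ds$ for the $L^2(\Omega)$ norm; this follows from Minkowski's integral inequality, or, if one prefers an elementary argument, by writing the square of the left-hand side as a double integral over $\Omega$ and applying Cauchy--Schwarz pointwise in $x$, exactly as in the proof of Lemma~\ref{lem:minsk}. Everything else is the elementary evaluation of a Beta-type integral, so I would present both bounds in a few lines.
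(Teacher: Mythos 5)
Your proposal is correct and follows essentially the same route as the paper's proof: Minkowski's integral inequality to reduce to the scalar convolution $\int_0^t\omega_\beta(t-s)\|\phi(s)\|\,ds$, then either pulling out the $L^\infty$ norm or applying Cauchy--Schwarz (H\"older) in time and evaluating $\int_0^t\omega_\beta^2(t-s)\,ds$. The computations and constants match exactly, so there is nothing to add.
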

\begin{proof}
Minkowski's integral inequality gives
\begin{align}\label{eq:L1}
\|J^\beta\phi(t)\|&=\left[\int_\Omega
	\left(\int_0^t\omega_\beta (t-s)\phi(s)\, ds\right)^2\,dx \right]^{1/2} 
	\leq  \int_0^t \omega_\beta (t-s)\|\phi(s)\|\, ds\\
&\leq \|\phi\|_{L^\infty(0,t;L^2)} \int_0^t \omega_\beta (t-s)\, ds 
= \frac{t^\beta}{\Gamma(\beta+1)}\|\phi\|_{L^\infty(0,t;L^2)}.\notag
\end{align}
To prove the second inequality, apply  H\"older's inequality to~\eqref{eq:L1} to 
obtain 
\[
\|J^\beta\phi(t)\|
\leq 
\int_0^t \omega_\beta (t-s)\|\phi(s)\|\, ds
\leq 
\left(\int_0^t \omega_\beta^2 (t-s)\, ds\right)^{1/2}\|\phi\|_{L^2(0,t;L^2)}
=
\frac{t^{\beta-1/2}}{\sqrt{2\beta-1}\,\Gamma(\beta)}\|\phi\|_{L^2(0,t;L^2)}
\]
for any $\beta>1/2$, which completes the proof of this lemma.
\end{proof}

\begin{lemma}\cite[Theorem~A.1]{McLean2012}\label{lem:rho}
For $t>0$,
\[
\int_0^t\langle\partial_s^{1-\alpha}v,v\rangle\,ds
	\ge\rho_\alpha t^{\alpha-1}\int_0^t\|v(s)\|^2\,ds
\quad\text{where}\quad
\rho_\alpha=\pi^{1-\alpha}\,\frac{(1-\alpha)^{1-\alpha}}{(2-\alpha)^{2-\alpha}}
\,\sin(\tfrac12\pi\alpha).
\]
\end{lemma}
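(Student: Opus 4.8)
The plan is to prove the positivity estimate
\[
\int_0^t\langle\partial_s^{1-\alpha}v,v\rangle\,ds
\ge\rho_\alpha t^{\alpha-1}\int_0^t\|v(s)\|^2\,ds
\]
by reducing the vector-valued statement to a scalar one and then estimating a scalar quadratic form via its Fourier/Laplace representation. Since the integrand $\langle\partial_s^{1-\alpha}v(\cdot,x),v(\cdot,x)\rangle$ is obtained by integrating over $x\in\Omega$, it suffices to prove the inequality for scalar functions $w\in L^2(0,t)$, namely $\int_0^t (\partial_s^{1-\alpha}w)(s)\,w(s)\,ds\ge\rho_\alpha t^{\alpha-1}\int_0^t w(s)^2\,ds$, and then integrate in $x$. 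So the first step is this reduction.

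Next I would recall the standard spectral identity for the Riemann--Liouville derivative on a finite interval. Extending $w$ by zero outside $[0,t]$ and using the Fourier transform, one has the classical fact that $\int_0^t(\partial_s^{1-\alpha}w)\,w\,ds$ equals a constant times $\int_{\mathbb{R}}|\xi|^{1-\alpha}\cos\bigl(\tfrac{\pi}{2}(1-\alpha)\bigr)|\widehat{w}(\xi)|^2\,d\xi$ (the real part of $(i\xi)^{1-\alpha}$), which is manifestly non-negative with the right trigonometric prefactor $\sin(\tfrac12\pi\alpha)=\cos(\tfrac{\pi}{2}(1-\alpha))$. This gives positivity cleanly, but the real work — and the main obstacle — is to get the \emph{explicit constant} $\rho_\alpha$ with the factor $t^{\alpha-1}$ rather than merely a $t$-dependent bound of the wrong homogeneity. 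The natural route, following the cited reference \cite{McLean2012}, is a scaling argument combined with a sharp lower bound on a kernel. One rescales to the unit interval, so that the problem becomes: find the best constant $c_\alpha$ with $\int_0^1(\partial_s^{1-\alpha}\tilde w)\,\tilde w\,ds\ge c_\alpha\int_0^1\tilde w^2\,ds$, and then homogeneity in $t$ supplies the $t^{\alpha-1}$ factor. Identifying $c_\alpha=\rho_\alpha$ requires writing the bilinear form with its kernel, symmetrising, and bounding the resulting symmetric kernel below by a rank-one (or constant) kernel; the specific algebraic shape $(1-\alpha)^{1-\alpha}/(2-\alpha)^{2-\alpha}$ strongly suggests that the extremiser of the relevant minimisation is a monomial $s^{\gamma}$ and that one optimises over the exponent $\gamma$, with the optimal $\gamma$ producing exactly those powers.

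Concretely, the key steps in order are: (i) reduce to scalar $w$ by integrating over $\Omega$; (ii) write $\int_0^t(\partial_s^{1-\alpha}w)\,w\,ds=\int_0^t w(s)\,\tfrac{d}{ds}\!\int_0^s\omega_\alpha(s-r)w(r)\,dr\,ds$ and symmetrise the double integral, exhibiting it as $\tfrac12\iint K_\alpha(s,r)w(s)w(r)\,dr\,ds$ for an explicit symmetric kernel $K_\alpha$ on $(0,t)^2$; (iii) by the scaling $s\mapsto ts$, reduce to $t=1$ and read off the $t^{\alpha-1}$ homogeneity; (iv) bound the unit-interval quadratic form below by $\rho_\alpha\int_0^1 w^2$ — equivalently, show the smallest ``eigenvalue'' of $K_\alpha$ is at least $2\rho_\alpha$ — by testing against, and reducing to, the monomial family $w(s)=s^{\gamma-1}$ and optimising over $\gamma$, which is where the closed form $\rho_\alpha=\pi^{1-\alpha}(1-\alpha)^{1-\alpha}(2-\alpha)^{-(2-\alpha)}\sin(\tfrac12\pi\alpha)$ emerges; (v) integrate back over $x\in\Omega$ to conclude. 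I expect step (iv) — pinning down the sharp constant and verifying it is actually attained (or at least not beaten) — to be the crux; everything else is bookkeeping, and in the write-up one may legitimately defer the details to \cite[Theorem~A.1]{McLean2012} as the statement already does.
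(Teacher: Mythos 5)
The paper itself offers no proof of this lemma: it is quoted verbatim from \cite[Theorem~A.1]{McLean2012}, so strictly speaking there is nothing internal to compare against. Judged on its own terms, your outline gets the first half right --- the reduction to scalar $w\in L^2(0,t)$ by integrating over $\Omega$, the extension by zero, and the Plancherel identity giving $\int_0^t(\partial_s^{1-\alpha}w)w\,ds=\tfrac{1}{2\pi}\sin(\tfrac12\pi\alpha)\int_{\mathbb{R}}|\xi|^{1-\alpha}|\widehat{w}(\xi)|^2\,d\xi$ --- and you correctly identify that the real difficulty is extracting the explicit constant $\rho_\alpha t^{\alpha-1}$.

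The gap is in your step (iv). Testing a quadratic form against a family of monomials $s^{\gamma-1}$ and optimising over $\gamma$ can only produce an \emph{upper} bound on the best constant in the inequality; it cannot establish the lower bound $\ge\rho_\alpha\int_0^1 w^2$ that is needed for \emph{all} $w$, and the kernel-symmetrisation route you sketch gives no mechanism for bounding the bottom of the spectrum from below. The constant $(1-\alpha)^{1-\alpha}/(2-\alpha)^{2-\alpha}$ does not come from a monomial extremiser: in \cite{McLean2012} it arises from a frequency-cutoff argument applied directly to the Plancherel representation. Since $w$ is supported in $[0,t]$, the Cauchy--Schwarz inequality gives $|\widehat{w}(\xi)|^2\le t\|w\|_{L^2(0,t)}^2$ for every $\xi$, hence $\int_{|\xi|<R}|\widehat{w}|^2\,d\xi\le 2Rt\|w\|_{L^2(0,t)}^2$; discarding the low frequencies and using $|\xi|^{1-\alpha}\ge R^{1-\alpha}$ on $|\xi|\ge R$ yields a lower bound proportional to $R^{1-\alpha}\bigl(2\pi-2Rt\bigr)\|w\|_{L^2(0,t)}^2$. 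Maximising $x^{1-\alpha}(1-x)$ at $x=(1-\alpha)/(2-\alpha)$, i.e.\ choosing $R=\pi(1-\alpha)/\bigl(t(2-\alpha)\bigr)$, produces exactly the factors $\pi^{1-\alpha}$, $(1-\alpha)^{1-\alpha}/(2-\alpha)^{2-\alpha}$ and $t^{\alpha-1}$ in $\rho_\alpha$. No sharpness claim is required --- the lemma only asserts a lower bound --- so the question of whether $\rho_\alpha$ is attained, which you flag as the crux, never actually arises. To make your proposal into a proof you would replace step (iv) by this cutoff optimisation; as written, the argument does not close.
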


The following estimate involving the force~$\textbf{F}$ is used several times 
in our analysis.

\begin{lemma}\label{lem:Fphi} 
If $\phi:[0,T]\to H^1(\Omega)$, then
\[
\bigl\|\nabla\cdot\bigl(\textbf{F}(t)\phi(t)\bigr)\bigr\|
\le\|\textbf{F}\|_{1,\infty}\|\phi(t)\|_{H^1(\Omega)}
\quad\text{for $0\le t\le T$.}
\]
\end{lemma}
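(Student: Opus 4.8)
The plan is elementary: expand the divergence by the Leibniz rule and estimate the two resulting terms by sup-norms of the coefficients. Fix $t\in[0,T]$ and abbreviate $\mathbf F=\mathbf F(t)$, $\phi=\phi(t)$. Since the components of $\mathbf F$ lie in $W^{1,\infty}(\Omega)$ and $\phi\in H^1(\Omega)$, the vector field $\mathbf F\phi$ belongs to $H^1(\Omega)^d$ and its weak divergence satisfies the product rule
\[
\nabla\cdot(\mathbf F\phi)=(\nabla\cdot\mathbf F)\,\phi+\mathbf F\cdot\nabla\phi,
\]
the right-hand side lying in $L^2(\Omega)$ thanks to the boundedness of $\nabla\cdot\mathbf F$ and of each $F_i$. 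This Sobolev product rule is the only step that is not completely automatic; it follows from the standard rule for differentiating the product of a $W^{1,\infty}$ function with an $H^1$ function.

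With the identity in hand, I would take $L^2(\Omega)$ norms, apply the triangle inequality, and bound the two summands pointwise in $x$: one has $|(\nabla\cdot\mathbf F)\phi|\le\|\nabla\cdot\mathbf F\|_\infty\,|\phi|$, while the Cauchy--Schwarz inequality for the scalar product together with the definition of $\|\mathbf F\|_\infty$ gives $|\mathbf F\cdot\nabla\phi|\le\|\mathbf F\|_\infty\,|\nabla\phi|$. Integrating over $\Omega$ yields
\[
\bigl\|\nabla\cdot(\mathbf F\phi)\bigr\|
\le\|\nabla\cdot\mathbf F\|_\infty\,\|\phi\|+\|\mathbf F\|_\infty\,|\phi|_{H^1(\Omega)}.
\]
To recover exactly the stated constant I would apply the Cauchy--Schwarz inequality once more, this time to the two summands, obtaining
\[
\bigl\|\nabla\cdot(\mathbf F\phi)\bigr\|
\le\bigl(\|\nabla\cdot\mathbf F\|_\infty^2+\|\mathbf F\|_\infty^2\bigr)^{1/2}\bigl(\|\phi\|^2+|\phi|_{H^1(\Omega)}^2\bigr)^{1/2}
\le\|\mathbf F\|_{1,\infty}\,\|\phi\|_{H^1(\Omega)},
\]
using $(a^2+b^2)^{1/2}\le a+b$ for $a,b\ge0$, the definition of $\|\mathbf F\|_{1,\infty}$, and the definition of the $H^1(\Omega)$ norm.

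I do not foresee any genuine obstacle: the whole argument is a couple of lines once the Leibniz rule is invoked. The only points needing a little care are the justification of that Leibniz rule in the $W^{1,\infty}(\Omega)\times H^1(\Omega)$ setting and the (at worst dimension-dependent) constant that enters when the scalar product $\mathbf F\cdot\nabla\phi$ is estimated via the componentwise bound $\|\mathbf F\|_\infty$; such a factor is harmless and can be absorbed into the generic constants used elsewhere in the paper.
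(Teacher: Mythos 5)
Your proposal is correct and follows essentially the same route as the paper: expand $\nabla\cdot(\mathbf F\phi)=(\nabla\cdot\mathbf F)\phi+\mathbf F\cdot\nabla\phi$, bound each term by sup-norms of $\mathbf F$, and combine via Cauchy--Schwarz to reach $\|\mathbf F\|_{1,\infty}\|\phi(t)\|_{H^1(\Omega)}$. The minor $\sqrt d$ issue you flag in estimating $|\mathbf F\cdot\nabla\phi|$ via the componentwise maximum is present in the paper's argument as well and is, as you say, harmless.
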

\begin{proof}
The vector field identity $\nabla\cdot(\textbf{F}\phi)
=(\nabla\cdot\textbf{F})\phi+\textbf{F}\cdot(\nabla\phi)$ implies that
\begin{align*}
\bigl\|\nabla\cdot\bigl(\textbf{F}(t)\phi(t)\bigr)\bigr\|^2
    &\le\Bigl(\|\nabla\cdot\textbf{F}(t)\|_{L^\infty(\Omega)}^2
    +\|\textbf{F}(t)\|_{L^\infty(\Omega,\mathbb{R}^d)}^2\Bigr)
    \Bigl(\|\phi(t)\|^2+\|\nabla\phi(t)\|^2\Bigr)\\
    &\le\Bigl(\|\nabla\cdot\textbf{F}(t)\|_{L^\infty(\Omega)}
    +\|\textbf{F}(t)\|_{L^\infty(\Omega,\mathbb{R}^d)}\Bigr)^2
    \|\phi(t)\|_{H^1(\Omega)}^2 \\
    &\le\Bigl(\|\textbf{F}\|_{1,\infty} \|\phi(t)\|_{H^1(\Omega)}\Bigr)^2,
\end{align*}
which gives the desired estimate.
\end{proof}

We now recall a fundamental compactness result that will be used several times in the proofs of our main results.

\begin{lemma}[Aubin--Lions--Simon]\label{lem: Aubin}
Let $B_0\subset B_1\subset B_2$ be three Banach spaces. Assume that the 
embedding of $B_1$ in $B_2$ is continuous and that the embedding of $B_0$ in 
$B_1$ is compact. Let $p$~and $r$ satisfy $1\leq p,r\leq +\infty$. For $T>0$, 
define the Banach space
\[
E_{p,r}:=\bigl\{v\in L^p((0,T);B_0): \partial_t v\in L^r((0,T);B_2) \bigr\}
\]
with norm
\[
\|v\|_{E_{p,r}}:=\|v\|_{L^p((0,T);B_0)}+\|v'\|_{L^r((0,T);B_2)}.
\]
Then,
\begin{itemize}
\item the embedding $E_{p,r}\subset L^p((0,T),B_1)$ is compact 
when~$p<+\infty$, and
\item the embedding $E_{p,r}\subset C([0,T],B_1)$ is compact when
$p=+\infty$~and $r>1$. 
\end{itemize}
\end{lemma}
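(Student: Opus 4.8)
The plan is to deduce the result from the Fréchet--Kolmogorov criterion for relative compactness in the vector-valued spaces $L^p((0,T);B_1)$ and $C([0,T];B_1)$, combined with an Ehrling-type interpolation inequality that converts information in $B_2$ into information in $B_1$ at the cost of a small multiple of the $B_0$-norm. Throughout, let $\mathcal{F}$ denote a bounded subset of $E_{p,r}$; since $E_{p,r}$ is a Banach space it suffices to show that $\mathcal{F}$ is relatively compact in $L^p((0,T);B_1)$ when $p<\infty$, and in $C([0,T];B_1)$ when $p=\infty$ and $r>1$.

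First I would establish the interpolation inequality: for every $\eta>0$ there is a constant $C_\eta$ with
\[
\|w\|_{B_1}\le\eta\,\|w\|_{B_0}+C_\eta\,\|w\|_{B_2}
\qquad\text{for all }w\in B_0.
\]
This follows from the compactness of the embedding $B_0\hookrightarrow B_1$ by a contradiction argument: if it failed for some $\eta_0$, one obtains a sequence $(w_n)$ in $B_0$ with $\|w_n\|_{B_0}=1$ and $\|w_n\|_{B_1}>\eta_0+n\|w_n\|_{B_2}$; compactness of $B_0\hookrightarrow B_1$ gives a subsequence converging in $B_1$, hence (by continuity of $B_1\hookrightarrow B_2$) in $B_2$, to a limit $w$ with $\|w\|_{B_2}=0$ but $\|w\|_{B_1}\ge\eta_0$, a contradiction.

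Next I would verify the two hypotheses of the compactness criterion for $\mathcal{F}$, with $B_1$ as the target space. The ``average/pointwise'' condition is immediate from the compact embedding: for $0\le t_1<t_2\le T$ the averages $\int_{t_1}^{t_2}v(t)\,dt$ (or, when $p=\infty$, the values $v(t)$, which are meaningful since $v'\in L^r((0,T);B_2)\subset L^1((0,T);B_2)$) remain in a bounded subset of $B_0$ by Hölder's inequality, and a bounded subset of $B_0$ is relatively compact in $B_1$. For the equicontinuity condition, $v'\in L^r((0,T);B_2)$ enters via $v(t+h)-v(t)=\int_t^{t+h}v'(s)\,ds$, which gives $\|v(t+h)-v(t)\|_{B_2}\le\int_t^{t+h}\|v'(s)\|_{B_2}\,ds$; taking the appropriate time-norm and using Young's convolution inequality (resp. Hölder, which needs $r>1$, when $p=\infty$) yields a bound of the form $C\,h^{\theta}\,\|v'\|_{L^r((0,T);B_2)}$ with $\theta>0$, uniformly over $\mathcal{F}$. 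Applying the interpolation inequality to $w=v(t+h)-v(t)$, then integrating (or taking suprema) in $t$ and using the uniform $L^p((0,T);B_0)$ bound on $\mathcal{F}$, gives
\[
\sup_{v\in\mathcal{F}}\|v(\cdot+h)-v\|_{L^p((0,T-h);B_1)}
\le 2\eta\,\sup_{v\in\mathcal{F}}\|v\|_{L^p((0,T);B_0)}+C_\eta\,C\,h^{\theta},
\]
which is made arbitrarily small by choosing $\eta$ small and then $h$ small; the analogous estimate with suprema handles the $C([0,T];B_1)$ case. The Fréchet--Kolmogorov criterion then yields the two claimed compact embeddings.

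The main obstacle is bookkeeping rather than conceptual: one must track how the Young/Hölder exponent $\theta$ depends on the relation between $p$ and $r$ (it is exactly here that the hypothesis $r>1$ is needed for the $C([0,T];B_1)$ statement, so that $\theta=1-1/r>0$), and one must invoke the vector-valued Fréchet--Kolmogorov theorem in a form valid on a bounded interval, including the measure-theoretic point that every element of $E_{p,r}$ has a representative that is continuous into $B_2$, so that its values and translates are well defined. An alternative, which we adopt, is simply to quote the result in the form established by Simon.
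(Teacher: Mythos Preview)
The paper does not prove this lemma at all: its ``proof'' is a one-line citation to \cite[Theorem~II.5.16]{BoyerFabrie2013}. Your proposal is consistent with this---you sketch the standard Ehrling-plus-Fr\'echet--Kolmogorov argument and then close by saying you would simply quote Simon's result---so the final outcome matches the paper's approach of citing a reference, and your sketch of the underlying argument is correct as additional background.
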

\begin{proof}
See, e.g.,~\cite[Theorem II.5.16]{BoyerFabrie2013}.
\end{proof}

\section{Galerkin approximation of the solution}\label{sec:Galerkin}
In this section we  prove existence and uniqueness of a finite-dimensional Galerkin approximation of the solution of~\eqref{prob}. This is a standard classical tool for deriving existence and regularity results for parabolic initial-boundary value problems; see, e.g., \cite[Section 7.1.2]{Evans10}.

Let $\{w_k\}_{k=1}^\infty$ be a complete set of eigenfunctions for the 
operator~$-\Delta$ in~$H_0^1(\Omega)$, with $\{w_k\}$ an orthonormal basis 
of~$L^2(\Omega)$ and an orthogonal basis of~$H_0^1(\Omega)$; 
see~\cite[Section 6.5.1]{Evans10}. For each positive integer~$m$, set 
$W_m=\operatorname{span}\{w_1,w_2,\ldots,w_m\}$ and consider $u_m:[0,T]\to  
W_m$ given by
\[
u_m(t):= \sum_{k=1}^m d_m^k(t) w_k(x).
\]
Let $\Pi_m$ be the orthogonal projector from~$L^2(\Omega)$ onto~$W_m$ defined by: for each $v\in L^2(\Omega)$, one has
\[
\Pi_m v\in W_m\ \text{ and }\ \langle\Pi_m v,w\rangle = \langle v,w\rangle\quad\text{for all $w\in W_m$.}
\]
The projections of the source term and initial data are denoted by
\[
g_m(t) := \Pi_mg(t)\ \text{ and} \  u_{0m} := \Pi_mu_0.
\]
We aim to choose the functions $d_m^k$ so that 
for $k=1$, $2$, \dots, $m$ and $t\in (0,T]$ one has
\begin{subequations} \label{dmk2}
\begin{align}
u_m' -\kappa_\alpha\partial_t^{1-\alpha}\Delta u_m
	+\Pi_m\bigl(\nabla\cdot(\textbf{F}(t)\partial_t^{1-\alpha}u_m)\bigr)
    &=g_m(t) \label{dmka2}
\intertext{and}
d_m^k(0) &= \langle u_0, w_k\rangle.  \label{dmkb2}
\end{align}
\end{subequations}

Existence and uniqueness of a solution to~\eqref{dmk2} are guaranteed by the following lemma.

\begin{lemma}\cite[Theorem 3.1]{LeMM16}\label{lem:existdmk}
Let $F\in W^{1,\infty}(0,T; L^\infty(\Omega))$~and $g\in L^1(0, T; L^2(\Omega))$. 
Then for each positive integer $m$, the system of equations \eqref{dmk2} has a solution $\{d_m^k\}_{k=1}^m$ with $u_m : [0,T] \to H^2(\Omega) \cap H_0^1(\Omega)$ absolutely continuous. This solution is unique among the space of absolutely continuous functions mapping $[0,T]$ to $H_0^1(\Omega)$.
\end{lemma}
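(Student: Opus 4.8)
The plan is to convert the integro-differential system \eqref{dmk2} into a fixed-point problem for a Volterra operator and then apply a standard contraction argument on short intervals, extending to all of $[0,T]$. First I would expand everything in the basis $\{w_k\}$: writing $u_m(t)=\sum_{k=1}^m d_m^k(t)w_k$, using $-\Delta w_k=\lambda_k w_k$ and pairing \eqref{dmka2} with $w_j$, the system becomes a vector equation
\begin{equation*}
(d_m)'(t) + \kappa_\alpha \Lambda_m\,\partial_t^{1-\alpha}d_m(t) + M_m(t)\,\partial_t^{1-\alpha}d_m(t) = \hat g_m(t),
\qquad d_m(0)=\hat u_{0m},
\end{equation*}
where $\Lambda_m=\mathrm{diag}(\lambda_1,\dots,\lambda_m)$, the matrix $M_m(t)$ has entries $\langle \nabla\cdot(\mathbf{F}(t)w_k),w_j\rangle$, and $\hat g_m$, $\hat u_{0m}$ are the coefficient vectors of $g_m$, $u_{0m}$. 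Since $\partial_t^{1-\alpha}=(J^\alpha\,\cdot\,)'$ and the term involving $\mathbf F$ is the troublesome one, I would integrate once in time: applying $\int_0^t(\cdot)\,ds$ and using $\int_0^t\partial_s^{1-\alpha}v\,ds=J^\alpha v(t)$ together with the product-rule identity $\int_0^t M_m(s)\,\partial_s^{1-\alpha}d_m(s)\,ds = M_m(t)(J^\alpha d_m)(t)-\int_0^t M_m'(s)(J^\alpha d_m)(s)\,ds$, one arrives at
\begin{equation*}
d_m(t) + \kappa_\alpha\Lambda_m (J^\alpha d_m)(t) + M_m(t)(J^\alpha d_m)(t) - \int_0^t M_m'(s)(J^\alpha d_m)(s)\,ds = \hat u_{0m} + \int_0^t \hat g_m(s)\,ds.
\end{equation*}
This is a linear Volterra integral equation of the second kind for $d_m$, with a weakly singular kernel coming from $\omega_\alpha$ inside $J^\alpha$.

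Next I would set up the fixed-point map $\mathcal T$ on $C([0,\tau];\mathbb R^m)$ (or on $L^\infty$), sending $d$ to the right-hand side minus the three $J^\alpha$-terms evaluated at $d$; a fixed point of $\mathcal T$ solves the integrated equation. Using the elementary bound $\|J^\alpha v(t)\|\le \frac{t^\alpha}{\Gamma(\alpha+1)}\|v\|_{L^\infty(0,t)}$ (the $X=\mathbb R^m$ case of Lemma~\ref{lem:L1}), together with boundedness of $\|\Lambda_m\|$, $\sup_t\|M_m(t)\|$ and $\sup_t\|M_m'(t)\|$ — all finite for fixed $m$ because $\mathbf F\in W^{1,\infty}$ and $\nabla\cdot\mathbf F$ is continuous, so $M_m(t)$ and $M_m'(t)$ have entries bounded by $\|\mathbf F\|_{1,\infty}\|w_k\|_{H^1}$ and by $\|\mathbf F'\|_{1,\infty}\|w_k\|_{H^1}$ respectively — one gets $\|\mathcal T d_1-\mathcal T d_2\|_{C[0,\tau]}\le C_m\,\tau^\alpha\,\|d_1-d_2\|_{C[0,\tau]}$, so $\mathcal T$ is a contraction once $\tau$ is small enough depending only on $m$. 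The Banach fixed-point theorem then gives a unique $d_m\in C([0,\tau];\mathbb R^m)$; since $\tau$ depends only on $m$ (not on the data or the starting time), I would reach the full interval $[0,T]$ by covering it with finitely many such subintervals, restarting with the appropriate ``initial'' values carried over and noting that the Volterra structure makes the solution on a later subinterval depend only on its values on earlier ones. Differentiating the integrated equation recovers \eqref{dmka2} and shows $d_m\in W^{1,1}$, hence $u_m$ is absolutely continuous into $\mathrm{span}\{w_k\}\subset H^2(\Omega)\cap H_0^1(\Omega)$; condition \eqref{dmkb2} holds by construction of $\hat u_{0m}$.

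For uniqueness among absolutely continuous functions $[0,T]\to H_0^1(\Omega)$: if $v$ is such a solution, projecting onto $W_m$ and testing against $w_1,\dots,w_m$ shows $\Pi_m v$ solves \eqref{dmk2}; but any absolutely continuous $W_m$-valued solution of \eqref{dmk2} satisfies the integrated Volterra equation above, and a Gronwall-type argument — subtract two solutions, take norms, apply the $J^\alpha$ bound, and invoke the fractional Gronwall inequality (Lemma~\ref{lem:Gronwall} with $\beta=\alpha$) — forces them to coincide; hence $\Pi_m v=u_m$. One must still argue $v$ lies in $W_m$, i.e.\ $(I-\Pi_m)v=0$: since $v$ is $H_0^1$-valued and solves the equation, testing the original (un-integrated) form against $w_j$ for $j>m$ and using that $u_m$ already accounts for the $W_m$-projection shows the high-frequency part satisfies a homogeneous version of the same Volterra equation with zero data, so it vanishes.

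The main obstacle I anticipate is bookkeeping rather than hard analysis: handling the $\mathbf F$-term cleanly requires the integration-by-parts step that turns $\partial_t^{1-\alpha}$ acting on $M_m(t)d_m(t)$ into $M_m(t)J^\alpha d_m(t)$ plus a harmless lower-order integral, and one must be careful that $M_m$ is only Lipschitz in $t$ (so $M_m'$ is merely $L^\infty$), which is why the argument is run at the level of $L^\infty$/absolutely continuous functions rather than $C^1$. Everything else — the weak singularity, the short-time contraction, the continuation, the uniqueness — is routine Volterra theory once the equation is in integrated form.
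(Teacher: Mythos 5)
Your reduction is the same one the paper uses: integrating \eqref{dmka2} in time and integrating the $\mathbf F$-term by parts produces exactly the Volterra equation \eqref{umVIE} (your coefficient-vector formulation with $\Lambda_m$, $M_m$, $M_m'$ is just that equation written in the eigenbasis), and solving it by a short-time contraction plus continuation is the standard route that the paper outsources to \cite{LeMM16}. The uniqueness argument via subtraction and the fractional Gronwall inequality also matches. (The digression about $(I-\Pi_m)v$ is unnecessary: a solution of \eqref{dmk2} is by definition a $W_m$-valued function $\sum_k d_m^k w_k$, so the uniqueness class is already finite-dimensional.)

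The genuine gap is the sentence ``Differentiating the integrated equation recovers \eqref{dmka2} and shows $d_m\in W^{1,1}$.'' This is circular. The contraction gives you only $d_m\in C([0,T];\mathbb R^m)$, and to differentiate the integrated equation you must first know that $J^\alpha d_m$ (equivalently the term $\int_0^t K_m(t,s)u_m(s)\,ds$) is absolutely continuous; but $J^\alpha$ of a merely continuous function need not be absolutely continuous --- equivalently, the Riemann--Liouville derivative $\partial_t^{1-\alpha}f=(J^\alpha f)'$ of a continuous $f$ need not exist in general. Establishing this absolute continuity is precisely the point the paper identifies as the gap in \cite[Theorem 3.1]{LeMM16} and fills using \cite[Theorem 2.5]{Diet10}, after which \cite[Lemma 2.12]{Diet10} justifies the differentiation and the a.e.\ existence of $\partial_t^{1-\alpha}u_m$. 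To repair your argument you would either need to invoke such a result for the integral term, or run the fixed-point argument in the space of absolutely continuous (or $W^{1,1}$) functions from the outset, using that $J^\alpha$ maps that space into itself with $(J^\alpha f)'=f(0)\omega_\alpha+J^\alpha(f')$, and that $G_m$ is absolutely continuous because $g\in L^1\bigl(0,T;L^2(\Omega)\bigr)$.
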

\begin{proof}
Our argument is based mainly on the proof of~\cite[Theorem 3.1]{LeMM16}, 
but we fill a gap in that argument by verifying that $u_m$ is absolutely 
continuous. Define the linear operator~$B_m(t): W_m \to W_m$ by 
\[
\langle B_m(t)v, w \rangle:=-\kappa_\alpha \langle \Delta v, w\rangle 
+\bigl\langle\Pi_m\bigl(\nabla\cdot(\textbf{F}(t,\cdot)v)\bigr), 
    w\bigr\rangle 
\quad\text{for all $v$, $w\in W_m$,}
\]
and rewrite \eqref{dmka2} as
\[
u_m'(t)+B_m(t)\partial_t^{1-\alpha}u_m(t)=g_m(t).
\]
Formally integrating this equation in time we obtain the Volterra integral 
equation~\cite[p.1768]{LeMM16}:
\begin{equation}\label{umVIE}
u_m(t) + \int_{s=0}^t K_m(t,s)u_m(s)\,ds = G_m(t) \text{ for } 0\le t\le T,
\end{equation}
where 
\[
K_m(t,s)=B_m(t)\omega_\alpha(t-s)
    -\int_s^t B_m'(\tau)\omega_\alpha(\tau-s)\,d\tau 
\quad\text{and}\quad
G_m(t):=u_{0m}+\int_0^t g_m(s)\,ds.
\]
It is shown in \cite{LeMM16} that \eqref{umVIE} has a unique 
solution~$u_m\in C\bigl([0,T]; H_0^1(\Omega)\bigr)$.

Now, $g\in L^1\bigl(0, T; L^2(\Omega)\bigr)$ implies that 
$g_m\in L^1\bigl(0, T; L^2(\Omega)\bigr)$, and it follows that 
$G_m:[0,T]\to L^2(\Omega)$ is absolutely continuous. Furthermore, Theorem~2.5 
of~\cite{Diet10}  implies  (using the continuity of $u_m$) that $t \mapsto 
\int_{s=0}^t K_m(t,s)u_m(s)\,ds$ is absolutely continuous. Hence, \eqref{umVIE} 
shows that $u_m: [0,T]\to L^2(\Omega)$ is absolutely continuous.

We are now able to differentiate \eqref{umVIE} (to differentiate the integral 
term, imitate the calculation in the proof of \cite[Lemma 2.12]{Diet10}), 
obtaining
\[
u_m'(t)+\int_{s=0}^t B_m(t)\omega_\alpha(t-s)u_m'(s)\,ds=g_m(t)
    \quad\text{for almost all $t\in [0,T]$.}
\]
The absolute continuity of $u_m(t)$ implies that $\partial_t^{1-\alpha}u_m(t)$ 
exists for almost all $t\in [0,T]$ by~\cite[Lemma 2.12]{Diet10}. Hence from the above equation, 
$u_m$ satisfies~\eqref{dmka2}. From~\eqref{umVIE}, one 
sees immediately that $u_m$ satisfies~\eqref{dmkb2}, so we have demonstrated the 
existence of a solution to~\eqref{dmk2}.

To see that this solution of~\eqref{dmk2} is unique among the space of 
absolutely continuous functions, one can use the proof 
of~\cite[Theorem 3.1]{LeMM16} since the absolute continuity of the solution is 
now known \emph{a priori}.
\end{proof}

\section{Existence and uniqueness of the mild solution}\label{sec:mild}

In this section, we assume that $\alpha\in(0,1)$, $\textbf{F}\in W^{1,\infty}((0,T)\times\Omega)$ and that the initial data 
$u_0\in L^2(\Omega)$.

\subsection{\emph{A priori} estimates}

In order to prove \emph{a priori} estimates, we consider the integrated form of 
equation~\eqref{dmka2}:
\begin{align}
u_m(t) -\kappa_\alpha J^{\alpha}\Delta u_m (t)+\int_0^t\Pi_m\bigl(
	\nabla\cdot(\textbf{F}(s)\partial_t^{1-\alpha}u_m(s))\bigr)\,ds
    &=G_m(t), \label{int dmka2}
\end{align}
where $G_m(t)=\Pi_mG(t)$ as in~\eqref{umVIE}.

Let 
$C_{\mathrm{P}}$ denote the Poincar\'e constant for~$\Omega$, viz.,
$\|v\|^2\le C_{\mathrm{P}}\|\nabla v\|^2$ for $v\in H^1_0(\Omega)$.

\begin{lemma}\label{lem: priori weak1}
Let $m$ be a positive integer. Let $u_m(t)$ be the absolutely continuous 
solution of~\eqref{dmka2} that is guaranteed by Lemma~\ref{lem:existdmk}. Then for any $t\in [0,T]$ one has
\begin{align}
\cos(\alpha\pi/2)\int_0^t\|J^{\alpha/2}u_m(s)\|^2\,ds
 + \kappa_\alpha \int_0^t \|J^\alpha u_m (s)\|^2_{H^1(\Omega)}ds 
 &\leq C_1\int_0^t\|G_m(s)\|^2\,ds  \label{eq: pri weak0}
 \intertext{and}
 \int_0^t \|u_m(s)\|^2ds &\leq C_3\int_0^t\|G_m(s)\|^2\,ds,  \label{eq: pri weak00}
\end{align}
where 
\begin{align*}
C_1 &:= \frac{1+C_{\mathrm{P}}}{2}\left[1
+\frac{C_2\omega_{1+\alpha/2}^2(t)}{\cos(\alpha\pi/2)}\,
       E_{\alpha/2}\left(\frac{C_2\omega_{1+\alpha/2}(t)}{\cos(\alpha\pi/2) }t^\alpha\right)  \right],
\\
C_2 &:=2\biggl(1+\frac{\|\mathbf{F}\|_{\infty}^2}{\kappa_\alpha}\biggr) 
    +\frac{T^2\|\mathbf{F}'\|_{\infty}^2}{\kappa_\alpha}\,, \\
C_3 &:=2+\frac{C_1}{\kappa_\alpha}\left(4\|\mathbf{F}\|_{1,\infty}^2+2T^2\|\mathbf{F}'\|_{1,\infty}^2\right) .
\end{align*}
\end{lemma}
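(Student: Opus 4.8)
The plan is to obtain the energy estimate \eqref{eq: pri weak0} by testing the integrated Galerkin equation \eqref{int dmka2} against~$\partial_t^{1-\alpha}u_m = (J^\alpha u_m)'$ in a way that exploits the coercivity lemmas. Concretely, I would apply $J^\alpha$ to \eqref{dmka2} (equivalently, differentiate \eqref{int dmka2}) and pair with $\partial_t^{1-\alpha}u_m$ in $L^2(\Omega)$, then integrate in time over $(0,t)$. The first term $\int_0^t\langle u_m,\partial_t^{1-\alpha}u_m\rangle = \int_0^t\langle J^\alpha(\partial_t^{1-\alpha}u_m),\partial_t^{1-\alpha}u_m\rangle$ is bounded below using Lemma~\ref{lem:pd2} by $\cos(\alpha\pi/2)\int_0^t\|J^{\alpha/2}(\partial_t^{1-\alpha}u_m)\|^2$; since $J^{\alpha/2}\partial_t^{1-\alpha}u_m$ relates back to $J^{\alpha/2}u_m$ via the semigroup property of the operators, this produces the first term on the left of \eqref{eq: pri weak0}. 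The Laplacian term $-\kappa_\alpha\int_0^t\langle\Delta(J^\alpha u_m),\partial_t^{1-\alpha}u_m\rangle = \kappa_\alpha\int_0^t\langle\nabla(J^\alpha u_m),\nabla(\partial_t^{1-\alpha}u_m)\rangle$, after integrating by parts in space (legitimate since $u_m\in H^2\cap H^1_0$) and recognising $\nabla\partial_t^{1-\alpha}u_m = \partial_t^{1-\alpha}\nabla u_m$, is again of the form covered by Lemma~\ref{lem:pd} (or its variant), yielding the $\kappa_\alpha\int_0^t\|J^\alpha u_m\|^2_{H^1}$ term via $\|J^\alpha u_m\|^2_{H^1} = \|J^\alpha u_m\|^2+\|\nabla J^\alpha u_m\|^2$ together with the Poincaré inequality (this is where the factor $1+C_{\mathrm P}$ enters).

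Next I would estimate the forcing and right-hand-side contributions. The term $\langle G_m,\partial_t^{1-\alpha}u_m\rangle$ integrated over $(0,t)$ is handled by writing $\partial_t^{1-\alpha}u_m$ back in terms of $J^{\alpha/2}$, applying Cauchy--Schwarz and Young's inequality so that the $\|J^{\alpha/2}u_m\|^2$ part is absorbed into the left side at the cost of a $C\int_0^t\|G_m\|^2$ contribution. The drift term $\int_0^t\langle\Pi_m(\nabla\cdot(\textbf{F}\partial_t^{1-\alpha}u_m)),\partial_t^{1-\alpha}u_m\rangle$ is the delicate one: using $\|\Pi_m\|\le 1$ and Lemma~\ref{lem:Fphi} (or just $\|\textbf F\|_\infty$ after integrating the $\nabla\cdot$ by parts onto the test function) gives a bound by $\|\textbf F\|_\infty\|\partial_t^{1-\alpha}u_m\|\,\|\nabla\partial_t^{1-\alpha}u_m\|$-type quantities; after converting to $J^\alpha u_m$ via the identity $\partial_t^{1-\alpha}u_m = (J^\alpha u_m)'$ and using Lemma~\ref{lem:minsk} to control $\|J^{\alpha/2}(\cdot)\|^2\le\omega_{1+\alpha/2}(t)J^{\alpha/2}(\|\cdot\|^2)$, one arrives at an inequality of the form $y(t)\le a(t) + b(t)\int_0^t\omega_{\alpha/2}(t-s)y(s)\,ds$ where $y(t)$ is (a multiple of) the left-hand side of \eqref{eq: pri weak0}, $a(t) = C\int_0^t\|G_m\|^2$, and $b(t)$ is a constant multiple of $C_2\omega_{1+\alpha/2}(t)/\cos(\alpha\pi/2)$. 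Feeding this into the fractional Gronwall inequality Lemma~\ref{lem:Gronwall} with $\beta = \alpha/2$ produces exactly the stated $C_1$, including the Mittag--Leffler factor $E_{\alpha/2}$.

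For the second bound \eqref{eq: pri weak00}, I would return to \eqref{int dmka2}, rearrange to $u_m(t) = G_m(t) + \kappa_\alpha\Delta J^\alpha u_m(t) - \int_0^t\Pi_m(\nabla\cdot(\textbf F(s)\partial_t^{1-\alpha}u_m(s)))\,ds$, and take $L^2(\Omega)$ norms. Rather than differentiating, I would integrate the $\nabla\cdot(\textbf F\,\partial_t^{1-\alpha}u_m)$ term by parts in time first — writing $\partial_t^{1-\alpha}u_m = (J^\alpha u_m)'$ and integrating by parts moves a derivative off onto $\textbf F$, producing a boundary term $\nabla\cdot(\textbf F(t)J^\alpha u_m(t))$ plus $\int_0^t\nabla\cdot(\textbf F'(s)J^\alpha u_m(s))\,ds$, both controlled pointwise by $\|\textbf F\|_{1,\infty}\|J^\alpha u_m\|_{H^1}$ using Lemma~\ref{lem:Fphi}. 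Squaring, using $(a+b+c)^2\le$ (a suitable constant)$\cdot(a^2+b^2+c^2)$, integrating over $(0,t)$, and bounding $\int_0^t\|J^\alpha u_m\|^2_{H^1}$ by $(C_1/\kappa_\alpha)\int_0^t\|G_m\|^2$ from \eqref{eq: pri weak0} yields \eqref{eq: pri weak00} with the claimed $C_3$.

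The main obstacle I anticipate is the drift term in the energy estimate: because $\textbf F$ depends on $t$, one cannot simply absorb $\nabla\cdot(\textbf F\,\partial_t^{1-\alpha}u_m)$ using a clean sign or skew-symmetry argument as in the autonomous case, and the derivative $\partial_t^{1-\alpha}u_m$ does not immediately have an $L^2(0,T;H^1)$ bound. The trick that makes it work is to keep everything in terms of $J^\alpha u_m$ (whose $H^1$-regularity in time is controlled by the coercive terms), pay the price of a factor $\omega_{1+\alpha/2}(t)$ from Lemma~\ref{lem:minsk} when passing between $\|J^{\alpha/2}\phi\|^2$ and $J^{\alpha/2}(\|\phi\|^2)$, and then close the loop with the fractional Gronwall inequality; getting the bookkeeping of constants to match $C_1$ and $C_2$ exactly is the fussiest part but is routine once the structure is set up.
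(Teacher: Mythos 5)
Your overall machinery (integrate the drift term by parts in time so the derivative lands on $\mathbf{F}$, use Lemma~\ref{lem:Fphi} and Lemma~\ref{lem:minsk}, close with the fractional Gronwall inequality at order~$\alpha/2$) matches the paper, but your choice of test function breaks the argument at its first step. You pair the integrated equation with~$\partial_t^{1-\alpha}u_m$ and invoke the identity $u_m=J^\alpha(\partial_t^{1-\alpha}u_m)$; this identity is false --- the correct one is $u_m=J^{1-\alpha}(\partial_t^{1-\alpha}u_m)$, since $J^\alpha\partial_t^{1-\alpha}u_m=\tfrac{d}{dt}J^{2\alpha}u_m\ne u_m$ for $\alpha<1$. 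With the corrected identity, Lemma~\ref{lem:pd2} applied to $\langle J^{1-\alpha}w,w\rangle$ with $w=\partial_t^{1-\alpha}u_m$ yields $\cos((1-\alpha)\pi/2)\int_0^t\|\partial_s^{(1-\alpha)/2}u_m\|^2\,ds$, a positive-order fractional derivative of~$u_m$, not the term $\cos(\alpha\pi/2)\int_0^t\|J^{\alpha/2}u_m\|^2\,ds$ appearing in~\eqref{eq: pri weak0}. Worse, the test function itself is inadmissible for the range of the lemma: since $\partial_t^{1-\alpha}u_m(t)\approx u_{0m}\,\omega_\alpha(t)$ near $t=0$, it fails to be square-integrable in time when $\alpha\le1/2$, so the pairings $\int_0^t\langle G_m,\partial_s^{1-\alpha}u_m\rangle\,ds$ etc.\ cannot be closed by Cauchy--Schwarz for all $\alpha\in(0,1)$ and $u_0\in L^2(\Omega)$ --- which is precisely why the paper reserves $\partial_t^{1-\alpha}$-type test functions for Sections~\ref{sec:classical}--\ref{sec:regclassical} under Assumption~\ref{ass:12}. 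The paper instead tests \eqref{int dmka2} against~$J^\alpha u_m(t)$, so the first term is literally $\langle J^\alpha u_m,u_m\rangle$, Lemma~\ref{lem:pd2} applies verbatim after time integration, and the Laplacian term gives $\kappa_\alpha\|J^\alpha\nabla u_m(t)\|^2$ pointwise.

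Your route to the second bound~\eqref{eq: pri weak00} also has a gap: rearranging \eqref{int dmka2} and taking $L^2(\Omega)$ norms produces the term $\kappa_\alpha\|\Delta J^\alpha u_m(t)\|$, which is \emph{not} controlled by~\eqref{eq: pri weak0} (that estimate only gives $H^1(\Omega)$ control of $J^\alpha u_m$). The $L^2(0,T;L^2)$ bound on $\Delta J^\alpha u_m$ is the content of~\eqref{eq: pri weak000} in Lemma~\ref{lem: priori weak2}, which is proved \emph{after} and \emph{using} the present lemma, so your argument is circular as written. The paper avoids the Laplacian in $L^2$ norm by pairing with~$u_m(t)$: the resulting term $\kappa_\alpha\int_0^t\langle J^\alpha\nabla u_m,\nabla u_m\rangle\,ds$ is nonnegative by Lemma~\ref{lem:pd2} and can simply be discarded, after which Young's inequality, \eqref{eq: pri weak6} and \eqref{eq: pri weak0} give the stated~$C_3$.
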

\begin{proof}
Taking the inner product of both sides of~\eqref{int dmka2} 
with~$J^{\alpha}u_m(t)\in W_m$ then integrating by parts with respect 
to~$x$, we obtain
\begin{align}\label{eq: pri weak1}
\langle J^{\alpha} u_m (t),  u_m(t) \rangle 
    + \kappa_{\alpha}\|J^{\alpha}\nabla u_m(t)\|^2 
    &=\left\langle \int_0^t\textbf{F}(s)\partial_t^{1-\alpha}u_m(s)ds, 
        J^{\alpha}\nabla u_m(t) \right\rangle  + \langle G_m(t),J^{\alpha}u_m(t)\rangle\nonumber\\
&\leq\frac{\kappa_\alpha}{2}\|J^{\alpha}\nabla u_m(t) \|^2 
+\frac{1}{2\kappa_{\alpha}}\left\|
    \int_0^t\textbf{F}(s)\partial_t^{1-\alpha} u_m(s)\,ds\right\|^2\nonumber\\
&\qquad +\frac14 \|G_m(t)\|^2 + \|J^\alpha u_m(t) \|^2. 
\end{align}
Integrating by parts with respect to the time variable, and using Minkowski's integral inequality and H\"older's inequality, we have
\begin{align}\label{eq: pri weak2}
\biggl\|\int_0^t\textbf{F}(s)\partial_t^{1-\alpha}u_m(s)\,ds\biggr\|^2
&=\biggl\|\textbf{F}(t) J^{\alpha}u_m(t)
    - \int_0^t\textbf{F}'(s) J^{\alpha}u_m(s)\,ds\biggr\|^2\nonumber\\
&\leq 2\|\textbf{F}\|_{\infty}^2\| J^{\alpha}u_m(t)\|^2
+
2\|\textbf{F}'\|_{\infty}^2 \left(\int_0^t \|J^{\alpha}u_m(s)\|\,ds\right)^2\nonumber\\
&\leq 
2\|\textbf{F}\|_{\infty}^2\| J^{\alpha}u_m(t)\|^2
+
2t\|\textbf{F}'\|_{\infty}^2\int_0^t \|J^{\alpha}u_m(s)\|^2\,ds.
\end{align}
It follows from~\eqref{eq: pri weak1} and~\eqref{eq: pri weak2} that
\begin{align*}
\langle J^{\alpha} u_m (t),  u_m(t)  \rangle
+ 
\frac{\kappa_{\alpha}}{2}\|J^{\alpha}\nabla u_m(t)\|^2 
&\leq \frac14 \|G_m(t)\|^2
+\left(1+\frac{\|\textbf{F}\|_{\infty}^2}{\kappa_\alpha}\right)
    \| J^{\alpha}u_m(t)\|^2\\
&\qquad+\frac{t\|\textbf{F}'\|_{\infty}^2}{\kappa_\alpha}
    \int_0^t \|J^{\alpha}u_m(s)\|^2\,ds.
\end{align*}
Integrating in time and invoking Lemma~\ref{lem:pd2}, we deduce that 
\begin{align}\label{eq: pri weak3}
\cos(\alpha\pi/2) & \int_0^t \|J^{\alpha/2}u_m(s)\|^2\,ds
+\kappa_{\alpha}\int_0^t \|J^{\alpha}\nabla u_m(s)\|^2\,ds
\leq \frac12 \int_0^t  \|G_m(s)\|^2 \,ds\nonumber\\
    &\qquad{}+2\left(1+\frac{\|\textbf{F}\|_{\infty}^2}{\kappa_\alpha}\right) 
    \int_0^t \| J^{\alpha}u_m(s)\|^2\,ds
+\frac{2\|\textbf{F}'\|_{\infty}^2}{\kappa_\alpha}
    \int_0^t s\int_0^s \|J^{\alpha}u_m(\tau)\|^2d\tau \,ds\nonumber\\
&\leq\frac12 \int_0^t  \|G_m(s)\|^2 \,ds
    + C_2 \int_0^t \| J^{\alpha}u_m(s)\|^2\,ds.
\end{align}
But Lemma~\ref{lem:minsk} gives us
\begin{equation}\label{eq: pd21}
\| J^{\alpha}u_m(s)\|^2 = \| J^{\alpha/2}(J^{\alpha/2}u_m)(s)\|^2
	\leq \omega_{1+\alpha/2}(s)J^{\alpha/2}(\| J^{\alpha/2}u_m\|^2)(s).
\end{equation}
Thus, setting $\psi_m(t):=J^{1}(\|J^{\alpha/2} u_m\|^2)(t)$, we deduce 
from~\eqref{eq: pri weak3} that
\begin{align*}
\psi_m(t) 
&\leq \frac{1}{2\cos(\alpha\pi/2) } \int_0^t  \|G_m(s)\|^2 \,ds 
	+ \frac{C_2\omega_{1+\alpha/2}(t)}{\cos(\alpha\pi/2) } 
	J^{1+\alpha/2}(\| J^{\alpha/2}u_m\|^2)(t)\\
&=
\frac{1}{2\cos(\alpha\pi/2) } \int_0^t  \|G_m(s)\|^2 \,ds 
	+ \frac{C_2\omega_{1+\alpha/2}(t)}{\cos(\alpha\pi/2) } J^{\alpha/2}\psi_m(t).
\end{align*}
Applying Lemma~\ref{lem:Gronwall}, one obtains
\begin{equation}\label{eq: pri weak4}
\psi_m(t) 
\leq E_{\alpha/2}\left(
	\frac{C_2\omega_{1+\alpha/2}(t)}{\cos(\alpha\pi/2)}\,t^\alpha\right)
	\,\frac{1}{2\cos(\alpha\pi/2) }  \int_0^t \|G_m(s)\|^2 \,ds 
    \quad\text{for $0\leq t\leq T$.}
\end{equation}
This inequality and~\eqref{eq: pd21} together yield
\begin{align*}
\int_0^t\|J^\alpha u_m(s)\|^2\,ds
&\leq \omega_{1+\alpha/2}(t)J^{\alpha/2}\psi_m(t)\\
    &\le    
    \frac{\omega_{1+\alpha/2}(t)}{2\cos(\alpha\pi/2)}\int_0^t\omega_{\alpha/2}(t-s)
     E_{\alpha/2}\left(\frac{C_2\omega_{1+\alpha/2}(s)}{\cos(\alpha\pi/2) }s^\alpha\right)
     	\int_0^s\|G_m(z)\|^2\,dz\,ds\nonumber\\
    &\le\frac{\omega_{1+\alpha/2}^2(t)}{2\cos(\alpha\pi/2)}
       E_{\alpha/2}\left(
	\frac{C_2\omega_{1+\alpha/2}(t)}{\cos(\alpha\pi/2)}\,t^\alpha\right)
	\int_0^t\|G_m(s)\|^2\,ds.
\end{align*}
Now \eqref{eq: pri weak0} follows immediately on recalling~\eqref{eq: pri weak3}--\eqref{eq: pri weak4} and the Poincar\'e inequality.

In a similar fashion, we take the inner product of both sides 
of~\eqref{int dmka2} with $u_m(t)\in W_m$ and then integrate by parts with respect to $x$, to obtain
\begin{multline}\label{eq: pri weak5}
\|u_m(t)\|^2+\kappa_{\alpha}\langle J^{\alpha}\nabla u_m(t),
    \nabla u_m(t)\rangle
= -\left\langle 
\int_0^t\nabla\cdot\bigl(\textbf{F}(s)\partial_t^{1-\alpha}u_m(s)\bigr)\,ds,  
    u_m(t)\right\rangle+\langle G_m,u_m\rangle\\
\leq \|G_m(t)\|^2 + \frac12 \|u_m(t)\|^2
+\left\|\int_0^t \nabla\cdot\bigl( 
    \textbf{F}(s)\partial_t^{1-\alpha}u_m(s)\bigr)\,ds\right\|^2.
\end{multline}
Using Lemma~\ref{lem:Fphi} and the same arguments as in the proof of~\eqref{eq: pri weak2}, we also have
\begin{equation}\label{eq: pri weak6}
\left\|\int_0^t \nabla\cdot\bigl( 
    \mathbf{F}(s)\partial_t^{1-\alpha}u_m(s)\bigr)\,ds\right\|^2
\leq2\|\mathbf{F}\|_{1,\infty}^2\| J^\alpha u_m(t)\|_{H^1(\Omega)}^2
+2t\|\mathbf{F}'\|_{1,\infty}^2
    \int_0^t \|J^\alpha u_m(s)\|_{H^1(\Omega)}^2\,ds. 
\end{equation}
This estimate and \eqref{eq: pri weak5} together imply
\begin{align*}
\frac12 \|u_m(t)\|^2 + \kappa_{\alpha}\langle J^{\alpha}\nabla u_m (t), \nabla u_m(t)\rangle 
    &\leq \|G_m(t)\|^2  + 2\|\textbf{F}\|_{1,\infty}^2\| J^{\alpha}u_m(t)\|_{H^1(\Omega)}^2  \\
    &\qquad +2t\|\textbf{F}'\|_{1,\infty}^2     \int_0^t \|J^{\alpha}u_m(s)\|_{H^1(\Omega)}^2\,ds.
\end{align*}
Integrating in time, we get
\[
\int_0^t \|u_m(s)\|^2\,ds\leq2\int_0^t \|G_m(s)\|^2\,ds
+\bigl(4\|\textbf{F}\|_{1,\infty}^2+ 
    2t^2\|\textbf{F}'\|_{1,\infty}^2\bigr)
    \int_0^t\|J^\alpha u_m(s)\|_{H^1(\Omega)}^2\,ds.
\]
Now apply the inequality~\eqref{eq: pri weak0} to complete the proof.
\end{proof}
\begin{lemma}\label{lem: priori weak2}
Let $m$ be a positive integer, and let $u_m(t)$ be the absolutely continuous 
solution of~\eqref{dmka2} that is guaranteed by Lemma~\ref{lem:existdmk}. Then,
for any $t\in [0,T]$,
\begin{align}
\cos(\alpha\pi/2)\int_0^t\|J^{\alpha/2} \nabla u_m(s)\|^2\,ds
 + \kappa_\alpha \int_0^t \|J^\alpha \Delta u_m (s)\|^2\,ds 
    &\leq C_4\int_0^t\|G_m(s)\|^2\,ds  \label{eq: pri weak000}  \\
\intertext{and}
\|J^1 u_m(t)\|_{H^1(\Omega)}^2 &\leq C_5\int_0^t\|G_m(s)\|^2\,ds,  \label{eq: pri weak000b}
\end{align}
where 
\[
C_4:=\frac{2}{\kappa_\alpha} 
 +\frac{2C_1}{\kappa_\alpha^2}\bigl(2\|\mathbf{F}\|_{1,\infty}^2 
 + T^2\|\mathbf{F}'\|_{1,\infty}^2\bigr)
\quad\text{and}\quad
C_5:=\frac{C_4T^{1-\alpha}(1+C_{\mathrm{P}})}%
{(1-\alpha)\cos(\alpha\pi/2)\Gamma(1-\alpha/2)^2}\,.
\]
\end{lemma}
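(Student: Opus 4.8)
The plan is to mimic the structure of the proof of Lemma~\ref{lem: priori weak1}, but now testing the integrated equation~\eqref{int dmka2} against $-J^\alpha\Delta u_m(t)$ rather than $J^\alpha u_m(t)$, so as to pick up the $H^2$-type quantity $\|J^\alpha\Delta u_m\|^2$. First I would take the inner product of both sides of~\eqref{int dmka2} with $-J^\alpha\Delta u_m(t)\in W_m$ (this element of $W_m$ is admissible since each $w_k$ is an eigenfunction of $-\Delta$). Integrating by parts in $x$, the term $\langle u_m,-J^\alpha\Delta u_m\rangle = \langle\nabla u_m,J^\alpha\nabla u_m\rangle$ produces a coercive contribution via Lemma~\ref{lem:pd2} (applied with $v=\nabla u_m$) after integration in time, while $-\kappa_\alpha\langle J^\alpha\Delta u_m,-J^\alpha\Delta u_m\rangle = \kappa_\alpha\|J^\alpha\Delta u_m(t)\|^2$ gives the second term on the left of~\eqref{eq: pri weak000}. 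The right-hand side contributions are $\langle G_m(t),-J^\alpha\Delta u_m(t)\rangle$ and the forcing term $\langle\int_0^t\nabla\cdot(\mathbf F(s)\partial_t^{1-\alpha}u_m(s))\,ds,\,-J^\alpha\Delta u_m(t)\rangle$; both are handled by Cauchy--Schwarz and Young's inequality, absorbing $\tfrac{\kappa_\alpha}{2}\|J^\alpha\Delta u_m(t)\|^2$ into the left.

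For the forcing term I would reuse the integration-by-parts identity in time from~\eqref{eq: pri weak2}, namely $\int_0^t\nabla\cdot(\mathbf F(s)\partial_t^{1-\alpha}u_m(s))\,ds = \nabla\cdot(\mathbf F(t)J^\alpha u_m(t)) - \int_0^t\nabla\cdot(\mathbf F'(s)J^\alpha u_m(s))\,ds$, together with Lemma~\ref{lem:Fphi} and Minkowski's/H\"older's inequality exactly as in~\eqref{eq: pri weak6}, to bound its squared norm by $2\|\mathbf F\|_{1,\infty}^2\|J^\alpha u_m(t)\|_{H^1(\Omega)}^2 + 2t\|\mathbf F'\|_{1,\infty}^2\int_0^t\|J^\alpha u_m(s)\|_{H^1(\Omega)}^2\,ds$. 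Integrating the resulting pointwise-in-$t$ inequality over $[0,t]$ and then invoking the already-proved estimate~\eqref{eq: pri weak0} of Lemma~\ref{lem: priori weak1} to control $\int_0^t\|J^\alpha u_m(s)\|_{H^1(\Omega)}^2\,ds$ by $(C_1/\kappa_\alpha)\int_0^t\|G_m(s)\|^2\,ds$, one collects all constants into $C_4$ and obtains~\eqref{eq: pri weak000}. The key point making this work is that no Gronwall argument is needed here: the troublesome nonlinear feedback was already tamed in Lemma~\ref{lem: priori weak1}, so the present estimate is ``downstream'' of it.

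For the second inequality~\eqref{eq: pri weak000b}, I would estimate $\|J^1 u_m(t)\|_{H^1(\Omega)}^2 = \|J^1 u_m(t)\|^2 + \|\nabla J^1 u_m(t)\|^2 = \|J^1 u_m(t)\|^2 + \|J^1\nabla u_m(t)\|^2$ by applying Lemma~\ref{lem:minsk} with $\beta=1-\alpha$ to the pair $J^1 = J^{1-\alpha}J^\alpha$: that is, write $J^1\nabla u_m(t) = J^{1-\alpha}(J^\alpha\nabla u_m)(t)$ and use $\|J^{1-\alpha}\psi(t)\|^2\le\omega_{2-\alpha}(t)J^{1-\alpha}(\|\psi\|^2)(t)$, so that $\|J^1\nabla u_m(t)\|^2\le\omega_{2-\alpha}(t)\,J^{1-\alpha}(\|J^\alpha\nabla u_m\|^2)(t)$, and since $J^{1-\alpha}$ of a nonnegative function is bounded by $\omega_{2-\alpha}(t)\max_{[0,t]}$ is too crude, instead bound $J^{1-\alpha}(\|J^\alpha\nabla u_m\|^2)(t)\le\omega_{1-\alpha}(t)\int_0^t\|J^\alpha\nabla u_m(s)\|^2\,ds$ is again crude; the cleaner route is $J^{1-\alpha}(\phi)(t)\le\frac{T^{1-\alpha}/\Gamma(1-\alpha)}{1-\alpha}$... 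I would in fact combine $\|J^1\nabla u_m(t)\|^2\le\omega_{2-\alpha}(t)J^{1-\alpha}(\|J^{\alpha}\nabla u_m\|^2)(t)$ with the coercivity estimate $\int_0^t\|J^{\alpha/2}\nabla u_m(s)\|^2\,ds$ from~\eqref{eq: pri weak000} via Lemma~\ref{lem:pd} (which gives $\int_0^t\langle J^\alpha\nabla u_m,\nabla u_m\rangle\ge\tfrac12 J^{1-\alpha}(\|J^\alpha\nabla u_m\|^2)(t)$)—this is precisely the mechanism that introduces the factor $1/((1-\alpha)\cos(\alpha\pi/2)\Gamma(1-\alpha/2)^2)$ and the $T^{1-\alpha}(1+C_{\mathrm P})$ in the definition of $C_5$, after bounding $\|J^1 u_m\|^2\le C_{\mathrm P}\|\nabla J^1 u_m\|^2$ by Poincar\'e. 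The main obstacle I anticipate is bookkeeping: matching the precise constants $C_4$ and $C_5$ as stated requires care in choosing the Young's-inequality weights and in tracking the $\omega$-factors $\omega_{2-\alpha}(t)=t^{1-\alpha}/\Gamma(2-\alpha)$ and the half-order coercivity constant, but no new analytic idea beyond Lemmas~\ref{lem:pd},~\ref{lem:pd2},~\ref{lem:minsk},~\ref{lem:Fphi} and the inequality~\eqref{eq: pri weak0} is needed.
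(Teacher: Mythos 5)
Your treatment of \eqref{eq: pri weak000} is essentially the paper's proof: test \eqref{int dmka2} with $-J^\alpha\Delta u_m(t)$, integrate by parts in $x$, reuse \eqref{eq: pri weak6}, integrate in time, apply Lemma~\ref{lem:pd2} to $\nabla u_m$, and close with \eqref{eq: pri weak0}. That part is fine.

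For \eqref{eq: pri weak000b}, however, the chain you finally settle on has a genuine gap. You propose $\|J^1\nabla u_m(t)\|^2\le\omega_{2-\alpha}(t)\,J^{1-\alpha}\bigl(\|J^\alpha\nabla u_m\|^2\bigr)(t)$ and then want to control $J^{1-\alpha}\bigl(\|J^\alpha\nabla u_m\|^2\bigr)(t)$ by the quantity $\int_0^t\|J^{\alpha/2}\nabla u_m\|^2\,ds$ appearing in \eqref{eq: pri weak000}, ``via Lemma~\ref{lem:pd}''. This does not follow: Lemma~\ref{lem:pd} and Lemma~\ref{lem:pd2} each give a \emph{lower} bound for the same quantity $\int_0^t\langle J^\alpha\nabla u_m,\nabla u_m\rangle\,ds$, and two lower bounds for a common quantity cannot be compared with one another. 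The step can be repaired by returning to the intermediate inequality in the proof of \eqref{eq: pri weak000} (before the coercivity lemma is applied) and bounding $\int_0^t\langle J^\alpha\nabla u_m,\nabla u_m\rangle\,ds$ from above by $\tfrac12 C_4\int_0^t\|G_m\|^2\,ds$, then invoking Lemma~\ref{lem:pd}; but even then you inherit the hypothesis $\alpha\in(1/2,1)$ of Lemma~\ref{lem:pd}, whereas Lemma~\ref{lem: priori weak2} is stated and used (in the proof of Theorem~\ref{the:exit0}) for all $\alpha\in(0,1)$. You would also land on a constant different from the stated $C_5$, whose factors $(1-\alpha)^{-1}\Gamma(1-\alpha/2)^{-2}$ and $\cos(\alpha\pi/2)^{-1}$ do not arise from your mechanism.

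The paper avoids all of this by splitting at the half order: write $J^1u_m=J^{1-\alpha/2}\bigl(J^{\alpha/2}u_m\bigr)$, use Minkowski's integral inequality (the first line of \eqref{eq:L1}) to get $\|J^1u_m(t)\|_{H^1(\Omega)}\le(\omega_{1-\alpha/2}*z)(t)$ with $z(s)=\|J^{\alpha/2}u_m(s)\|_{H^1(\Omega)}$, and then apply Young's convolution inequality, $\|\omega_{1-\alpha/2}*z\|_{L^\infty(0,t)}^2\le\|\omega_{1-\alpha/2}\|_{L^2(0,t)}^2\|z\|_{L^2(0,t)}^2$. The factor $\|\omega_{1-\alpha/2}\|_{L^2(0,t)}^2=t^{1-\alpha}/\bigl((1-\alpha)\Gamma(1-\alpha/2)^2\bigr)$ is exactly where the stated constant comes from, and $\|z\|_{L^2(0,t)}^2$ is controlled by $(1+C_{\mathrm{P}})\cos(\alpha\pi/2)^{-1}C_4\int_0^t\|G_m\|^2\,ds$ directly from \eqref{eq: pri weak000} and the Poincar\'e inequality. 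This route works for every $\alpha\in(0,1)$ and yields $C_5$ as written.
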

\begin{proof}
Taking the inner product of both sides of~\eqref{int dmka2} 
with~$-J^{\alpha}\Delta u_m(t)\in W_m$ and then integrating by parts with 
respect to~$x$, we obtain
\begin{align*}
\langle J^{\alpha} \nabla u_m (t)&, \nabla u_m(t)  \rangle
    +\kappa_{\alpha}\|J^{\alpha}\Delta u_m(t)\|^2 \\
&= 
\left\langle\int_0^t\nabla\cdot(\textbf{F}(s)\partial_t^{1-\alpha}u_m(s))\,ds, 
    J^{\alpha}\Delta u_m(t)\right\rangle
-\langle G_m(t),J^{\alpha} \Delta u_m(t)\rangle\\
&\leq 
\frac{\kappa_{\alpha}}{2}\|J^{\alpha}\Delta u_m(t) \|^2 
    +\frac1{\kappa_{\alpha}}\|G_m(t)\|^2+\frac{1}{\kappa_{\alpha}}
    \left\|\int_0^t\nabla\cdot(\textbf{F}(s)\partial_t^{1-\alpha}u_m(s))\,
        ds\right\|^2.
\end{align*}
This inequality and~\eqref{eq: pri weak6} together imply
\begin{align*}
\langle J^{\alpha} \nabla u_m (t), \nabla u_m(t)  \rangle
+ \frac{\kappa_{\alpha}}{2}\|J^{\alpha}\Delta u_m(t)\|^2 
&\leq 
\frac1{\kappa_{\alpha}}\|G_m(t)\|^2
+\frac{2}{\kappa_{\alpha}}\|\textbf{F}\|_{1,\infty}^2
    \| J^{\alpha}u_m(t)\|_{H^1(\Omega)}^2\\
&\qquad{}+\frac{2t}{\kappa_{\alpha}}\,\|\textbf{F}'\|_{1,\infty}^2
    \int_0^t \|J^{\alpha}u_m(s)\|_{H^1(\Omega)}^2\,ds.
\end{align*}
Integrating in time and invoking Lemma~\ref{lem:pd2}, we deduce that 
\begin{multline*}
2 \cos(\alpha\pi/2) J^{1}(\|J^{\alpha/2} \nabla u_m\|^2)(t)
+ \kappa_\alpha \int_0^t \|J^\alpha \Delta u_m (s)\|^2\,ds\\
\leq\frac{2}{\kappa_{\alpha}}\int_0^t \|G_m(s)\|^2\,ds
 + \frac{2}{\kappa_{\alpha}}
    (2\|\textbf{F}\|_{1,\infty}^2 + t^2\|\textbf{F}'\|_{1,\infty}^2)
    \int_0^t\|J^{\alpha}u_m(s)\|_{H^1(\Omega)}^2\,ds,
\end{multline*}
which, after applying inequality~\eqref{eq: pri weak0} of 
Lemma~\ref{lem: priori weak1}, completes the proof of~\eqref{eq: pri weak000}.

Applying~\eqref{eq:L1} with $\phi= J^{\alpha/2} u_m$ and $\beta=1-\alpha/2$ 
gives
\begin{align*}\label{eq:pd22}
 \|J^1 u_m(t) \|_{H^1(\Omega)}
&=\|J^{1-\alpha/2} J^{\alpha/2} u_m(t)\|_{H^1(\Omega)}
\leq 
  J^{1-\alpha/2}(\|J^{\alpha/2} u_m\|_{H^1(\Omega)})(t)
=(\omega_{1-\alpha/2}*z)(t),
\end{align*}
where $z(t)=\|J^{\alpha/2} u_m(t)\|_{H^1(\Omega)}$.
Using Young's convolution inequality we get
\begin{align*}
\|J^1 u_m(t)\|_{H^1(\Omega)}^2&\le\|\omega_{1-\alpha/2}*z\|_{L^\infty(0,t)}^2
\leq\|\omega_{1-\alpha/2}\|_{L^2(0,t)}^2\|z\|_{L^2(0,t)}^2\\
&= \frac{t^{1-\alpha}}{(1-\alpha)\Gamma(1-\alpha/2)^2}\, 
\int_0^t\|J^{\alpha/2} u_m(s)\|_{H^1(\Omega)}^2\,ds.
\end{align*}
The inequality \eqref{eq: pri weak000b} now follows immediately from~\eqref{eq: 
pri weak000}. 
\end{proof}

\subsection{The mild solution}
Our assumption that $\Omega$ has a $C^2$ boundary ensures that if $v\in 
H^1_0(\Omega)$ satisfies $\Delta v\in L^2(\Omega)$, then $v\in H^2(\Omega)$.
Moreover, there is a regularity constant~$C_{\mathrm{R}}$, depending only 
on~$\Omega$, such that
\begin{equation}\label{CR}
\|v\|_{H^2(\Omega)}\le C_{\mathrm{R}}\|\Delta v\|  \ \text{ for }v\in H^1_0(\Omega).
\end{equation}

Our next result requires a strengthening of the regularity hypothesis on \textbf{F}. 

\begin{theorem}\label{the:exit0}
Assume that $u_0\in L^2(\Omega), \textbf{F} \in W^{2,\infty}((0,T)\times\Omega)$ and 
$g\in L^2\bigl(0,T;L^2(\Omega)\bigr)$. Then there exists a unique mild solution 
$u$ of~\eqref{prob} (in the sense of Definition~\ref{def:mild}) such that
\begin{align}\label{eq:ubound0}
\|u\|_{L^2(0,T;L^2)}^2+\|J^\alpha u\|_{L^2(0,T;H^2)}^2\leq 
\bigl(C_4+\kappa_\alpha^{-1}C_4C_{\textrm{R}}\bigr) \|G\|_{L^2}^2.
\end{align}
\end{theorem}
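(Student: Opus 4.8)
The plan is to obtain the mild solution as a limit of the Galerkin approximations $u_m$ constructed in Lemma~\ref{lem:existdmk}, using the uniform \emph{a priori} bounds of Lemmas~\ref{lem: priori weak1}~and~\ref{lem: priori weak2} together with the Aubin--Lions--Simon lemma and a Gronwall-type uniqueness argument. First I would note that $\|G_m\|_{L^2}\le\|G\|_{L^2}$ since $\Pi_m$ is an orthogonal projection, so the right-hand sides in \eqref{eq: pri weak00}~and~\eqref{eq: pri weak000} are bounded by $\|G\|_{L^2}^2$ uniformly in $m$. Consequently $\{u_m\}$ is bounded in $L^2(0,T;L^2(\Omega))$ and $\{J^\alpha u_m\}$ is bounded in $L^2(0,T;H^2(\Omega))$; here I would invoke the elliptic regularity estimate \eqref{CR}, together with the fact that $J^\alpha u_m(t)\in W_m\subset H_0^1(\Omega)$ and the bound on $\|J^\alpha\Delta u_m\|$ from \eqref{eq: pri weak000}, to convert the Laplacian bound into an $H^2$ bound, which already yields the estimate \eqref{eq:ubound0} for the limit once weak lower semicontinuity is applied.

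Next I would extract a subsequence (not relabelled) such that $u_m\rightharpoonup u$ weakly in $L^2(0,T;L^2(\Omega))$ and $J^\alpha u_m\rightharpoonup w$ weakly in $L^2(0,T;H^2(\Omega)\cap H_0^1(\Omega))$; since $J^\alpha$ is a bounded linear operator on $L^2(0,T;L^2(\Omega))$, passing to the limit identifies $w=J^\alpha u$. To pass to the limit in the integrated Galerkin equation \eqref{int dmka2}, I would rewrite its nonlinear term using integration by parts in time exactly as in the derivation of \eqref{eq: pri weak2}, namely
\[
\int_0^t\Pi_m\bigl(\nabla\cdot(\textbf{F}(s)\partial_s^{1-\alpha}u_m(s))\bigr)\,ds
=\Pi_m\Bigl(\nabla\cdot(\textbf{F}(t)J^\alpha u_m(t))-\nabla\cdot\int_0^t\textbf{F}'(s)J^\alpha u_m(s)\,ds\Bigr),
\]
so that every occurrence of $u_m$ inside a spatial derivative appears only through $J^\alpha u_m$. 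Testing \eqref{int dmka2} against a fixed $w_k\in W_m$ ($m\ge k$) and against a smooth scalar test function of $t$, the weak convergence $J^\alpha u_m\rightharpoonup J^\alpha u$ in $L^2(0,T;H^1(\Omega))$ handles the terms $-\kappa_\alpha\Delta(J^\alpha u_m)$ and $\nabla\cdot(\textbf{F}J^\alpha u_m)$ (using Lemma~\ref{lem:Fphi} to see these are bounded linear images), the memory term is treated by Fubini and the same weak convergence, and $\Pi_m w_k\to w_k$ together with $G_m\to G$ in $L^2(0,T;L^2(\Omega))$ closes the argument. Taking the closed linear span over all $k$ shows $u$ satisfies \eqref{prob0} a.e., and $J^\alpha u\in L^2(0,T;H^2(\Omega)\cap H_0^1(\Omega))$ by construction, so $u$ is a mild solution.

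For uniqueness, suppose $u^{(1)},u^{(2)}$ are two mild solutions and set $v=u^{(1)}-u^{(2)}$; then $v\in L^2(0,T;L^2(\Omega))$, $J^\alpha v\in L^2(0,T;H^2(\Omega)\cap H_0^1(\Omega))$, and $v$ satisfies the homogeneous version of \eqref{prob0} with $G\equiv0$. I would test this identity with $v(t)$ itself (legitimate since $v(t)\in L^2(\Omega)$ and the other terms lie in $L^2(\Omega)$ for a.e.\ $t$), integrate over $(0,t)$, and use Lemma~\ref{lem:pd2} to get $\cos(\alpha\pi/2)\int_0^t\|J^{\alpha/2}v\|^2\le$ (memory terms), then bound the memory terms by $\int_0^t\|J^\alpha v\|_{H^1(\Omega)}^2$ via Lemma~\ref{lem:Fphi} and the time-integration-by-parts trick, and finally apply Lemma~\ref{lem:minsk} and the fractional Gronwall inequality (Lemma~\ref{lem:Gronwall}) to conclude $J^{\alpha/2}v\equiv0$, hence $v\equiv0$; this is essentially the $m$-independent reprise of the argument already run for $u_m$ in Lemma~\ref{lem: priori weak1}. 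The main obstacle I anticipate is purely technical rather than conceptual: carefully justifying that the limit $u$ actually satisfies \eqref{prob0} \emph{pointwise a.e.\ in $(0,T)\times\Omega$} rather than merely in a weak/distributional sense — this requires knowing that all four terms of \eqref{prob0} lie in $L^2((0,T)\times\Omega)$, which in turn leans on the $H^2$-regularity of $J^\alpha u$ (so $\Delta J^\alpha u\in L^2$) and on Lemma~\ref{lem:Fphi} (so $\nabla\cdot(\textbf{F}J^\alpha u)\in L^2$), and hence on the strengthened hypothesis $\textbf{F}\in W^{2,\infty}$, which is what makes $\textbf{F}'$ regular enough for the memory term to behave; I would flag that this hypothesis, rather than the weaker $W^{1,\infty}$ of the earlier sections, is exactly what is needed here.
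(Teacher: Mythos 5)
Your proposal is essentially correct and follows the same overall strategy as the paper---Galerkin approximation, the uniform bounds of Lemmas~\ref{lem: priori weak1}~and~\ref{lem: priori weak2}, extraction of weak limits, and passage to the limit in \eqref{int dmka2}---but it diverges from the paper's proof in three ways worth recording. First, you identify the weak limit of $J^\alpha u_m$ directly from $u_m\rightharpoonup u$ in $L^2(0,T;L^2(\Omega))$ and the boundedness of $J^\alpha$ on that space; the paper instead applies Lemma~\ref{lem: Aubin} twice (to $J^1u_m$ and to $J^{1+\alpha}u_m$) to obtain strong convergence in $C([0,T];L^2(\Omega))$ and uses Lemma~\ref{lem:L1} to match the limits. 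Since the equation is linear, your shortcut is legitimate and noticeably leaner. Second, by testing against $\phi(t)w_k(x)$ with $m\ge k$ you make $\Pi_m$ act as the identity on the test function, so you never need the paper's estimate \eqref{lim:weak4} of $\langle h_m,\Pi_m\xi-\xi\rangle$, and a single time integration by parts suffices where the paper uses two (see \eqref{lim weak3}, which is where $\textbf{F}''$ enters the paper's argument); note that $\textbf{F}\in W^{2,\infty}$ is still needed on your route so that $\nabla\cdot\bigl(\int_0^t\textbf{F}'(s)J^\alpha u(s)\,ds\bigr)$ in \eqref{prob0} lies in $L^2$, as you correctly observe. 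Third, your uniqueness argument is a genuine energy/Gronwall estimate for an \emph{arbitrary} mild solution, which is more complete than the paper's one-line appeal to linearity and \eqref{eq:ubound0}, an estimate the paper establishes only for the constructed limit rather than a priori for every mild solution.

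One small but genuine slip: in the uniqueness step you propose to test the homogeneous version of \eqref{prob0} with $v(t)$ itself. That cannot work as stated, because a mild solution $v$ lies only in $L^2(\Omega)$ for a.e.\ $t$, so the diffusion term $-\kappa_\alpha\langle\Delta(J^\alpha v),v\rangle$ can neither be integrated by parts nor be assigned a sign. The rest of your description (the appearance of $\langle J^\alpha v,v\rangle$ and of Lemma~\ref{lem:pd2}) shows you actually intend the pairing used in the derivation of \eqref{eq: pri weak0}: test with $J^\alpha v(t)\in H^2(\Omega)\cap H^1_0(\Omega)$, so that $-\kappa_\alpha\langle\Delta(J^\alpha v),J^\alpha v\rangle=\kappa_\alpha\|\nabla J^\alpha v\|^2\ge0$ and the leading term becomes $\langle v,J^\alpha v\rangle$, to which Lemma~\ref{lem:pd2} applies after integrating in time. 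With that correction the Gronwall argument closes and yields $J^{\alpha/2}v\equiv0$, hence $v\equiv0$.
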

\begin{proof}
In order to prove the existence of a mild solution, we first prove the convergence of the approximate solutions~$u_m$, and then find the limit of equation~\eqref{int dmka2} as~$m$ tends to infinity.

Note first that $\|G_m(s)\|\le\|G(s)\|$ because
\[
|\langle G_m(s),w\rangle|=|\langle G(s),\Pi_mw\rangle|
    \le\|G(s)\|\|\Pi_mw\|\le\|G(s)\|\|w\|
    \quad\text{for all $w\in L_2(\Omega)$.}
\]
Hence Lemma~\ref{lem: priori weak1} shows that the sequence 
$\{\partial_t(J^1u_m)\}_{m=1}^\infty = \{u_m\}_{m=1}^\infty$ is 
bounded in $L^2\bigl(0,T;L^2(\Omega)\bigr)$, and Lemma~\ref{lem: priori weak2} 
shows that the sequence~$\{J^1u_m\}_{m=1}^\infty$ is bounded 
in~$L^\infty\bigl(0,T;H^1_0(\Omega)\bigr)$.  Applying Lemma~\ref{lem: 
Aubin} with $B_0=H^1_0(\Omega)$, $B_1=B_2=L^2(\Omega)$, $p=+\infty$~and $r=2$,
it follows that there exists a subsequence of~$\{J^1u_m\}_{m=1}^\infty$, again 
denoted by~$\{J^1u_m\}_{m=1}^\infty$, and a 
$v\in C\bigl([0,T];L^2(\Omega)\bigr)$, such that 
\begin{equation}\label{con:1 weak}
\text{$J^1u_m\rightarrow v$ strongly in $C\bigl([0,T];L^2(\Omega)\bigr)$.}
\end{equation}
Furthermore, from the above bounds on $\{J^1u_m\}_{m=1}^\infty$ and 
well-known results~\cite[Theorem II.2.7]{BoyerFabrie2013} for weak and 
weak-$\star$ compactness, by choosing sub-subsequences we get
\begin{equation}\label{con:3 weak}
J^1u_m \to v \text{ weak-$\star$ in } L^\infty\bigl(0,T;H^1_0(\Omega)\bigr) \text{ and }
u_m=\partial_t(J^1u_m)\to\partial_t v \text{ weakly in }L^2\bigl(0,T;L^2(\Omega)\bigr).
\end{equation}

By letting $u:=\partial_t v\in L^2\bigl(0,T;L^2(\Omega)\bigr)$, we have 
$v=J^1u$. It remains to prove that $J^\alpha u_m$ converges weakly 
to~$J^\alpha u$ in~$L^2\bigl(0,T;H^2(\Omega)\bigr)$. Applying 
Lemma~\ref{lem:L1} with $\phi=J^1u_m$~and $\beta=\alpha$, for any $t\in[0,T]$ we deduce that
\[
\|J^{1+\alpha} u_m(t) \|_{H^1(\Omega)}
\leq 
\frac{t^\alpha}{\Gamma(\alpha+1)} \|J^1u_m\|_{L^{\infty}(0,t;H^1(\Omega))}.
\]
This inequality, together with Lemma~\ref{lem: priori weak2}, implies that the 
sequence~$\{J^{1+\alpha}u_m\}_{m=1}^\infty$ is bounded in~$L^\infty\bigl(0,T;H^1_0(\Omega)\bigr)$. Also,
Lemma~\ref{lem: priori weak1} shows that the sequence~$\{\partial_t(J^{1+\alpha} u_m)\}_{m=1}^\infty
=\{J^\alpha u_m\}_{m=1}^\infty$ is bounded in~$L^2\bigl(0,T; H^1_0(\Omega)\bigr)$. 
It now follows from Lemma~\ref{lem: Aubin}, again with $B_0=H^1_0(\Omega)$,
$B_1=B_2=L^2(\Omega)$, $p=+\infty$~and $r=2$, that there exists a subsequence 
of  $\{J^{1+\alpha}u_m\}_{m=1}^\infty$ (still denoted 
by~$\{J^{1+\alpha}u_m\}_{m=1}^\infty$) and 
$\bar{u}\in C\bigl([0,T];L^2(\Omega)\bigr)$ such 
that 
\begin{equation}\label{con:2 weak}
\text{$J^{1+\alpha} u_m\to\bar u$ strongly in 
$C\bigl([0,T];L^2(\Omega)\bigr)$.}
\end{equation}
Furthermore, from the upper bound~\eqref{eq: pri weak000} 
of~$\{\partial_t(J^{1+\alpha}u_m)\}_{m=1}^\infty$ 
in~$L^2\bigl(0,T;H^2(\Omega)\bigr)$, by choosing a subsequence one gets
\begin{align}\label{con:4 weak}
\text{$J^\alpha u_m = \partial_t(J^{1+\alpha}u_m)\to\partial_t \bar u$
weakly in $L^2\bigl(0,T;H^2(\Omega)\bigr)$.}
\end{align}
On the other hand, by applying Lemma~\ref{lem:L1} with 
$\phi=J^1(u_m-u)$ and $\beta=\alpha$, we deduce that for any $t\in[0,T]$ one has
\[
\|J^{1+\alpha} (u_m -u)(t)\|_{L^2(\Omega)}\leq 
\frac{t^\alpha}{\Gamma(\alpha+1)} \|J^1(u_m-u)\|_{L^{\infty}(0,t;L^2(\Omega))}.
\]
Hence, \eqref{con:1 weak} implies that 
$\lim_{m\to\infty}\|J^{1+\alpha}(u_m -u)\|_{L^\infty(0,T;L^2(\Omega))}=0$. 
Recalling~\eqref{con:2 weak}, we have $\bar{u} =J^{1+\alpha} u$.
By choosing subsequences, we obtain
\begin{align}
J^{1+\alpha} u_m&\to  J^{1+\alpha} u\,
\text{ strongly in $C\bigl([0,T];L^2(\Omega)\bigr)$,}\label{con:6 weak}\\
J^{1+\alpha} u_m&\to  J^{1+\alpha} u\,
\text{ weak-$\star$ in $L^\infty\bigl(0,T;H^1_0(\Omega)\bigr)$,}
\label{con:7 weak}\\
J^{\alpha} u_m&\to  J^{\alpha} u\,
\text{ weakly in $L^2\bigl(0,T;H^2(\Omega)\bigr)$,} \label{con:5 weak}
\end{align}
where we used the boundedness of $\{J^{1+\alpha} u_m\}_{m=1}^\infty$ 
in~$L^\infty\bigl(0,T;H^1_0(\Omega)\bigr)$ that was already mentioned, and 
\eqref{con:4 weak}. 

Multiplying both sides of~\eqref{int dmka2} by a test function $\xi\in 
C_c^\infty((0,T)\times \Omega)$, integrating over $(0,T)\times\Omega$
and noting that $\Pi_m$ is a self-adjoint operator on~$L^2(\Omega)$ gives
\begin{align}\label{lim weak1}
\langle u_m,\xi \rangle_{L^2(0,T;L^2)}
-\kappa_\alpha\langle J^{\alpha}\Delta u_m ,\xi \rangle_{L^2(0,T;L^2)}
	+\langle h_m,\Pi_m\xi \rangle_{L^2(0,T;L^2)}
	= \langle G_m,\xi \rangle_{L^2(0,T;L^2)},
\end{align}
where $h_m(t):=\int_0^t \nabla\cdot(\textbf{F}(s)\partial_t^{1-\alpha}u_m(s))\,ds$.
Using~\eqref{con:3 weak}~and \eqref{con:5 weak}, as~$m\to\infty$ one has
\begin{equation}\label{lim weak2}
\begin{aligned}
\langle u_m,\xi \rangle_{L^2(0,T;L^2)}
    &\to \langle u,\xi \rangle_{L^2(0,T;L^2)},\\
\langle G_m,\xi \rangle_{L^2(0,T;L^2)}
    &\to \langle G,\xi \rangle_{L^2(0,T;L^2)},\\
\langle J^{\alpha}\Delta u_m ,\xi \rangle_{L^2(0,T;L^2)} 
    &\to \langle J^{\alpha}\Delta u ,\xi \rangle_{L^2(0,T;L^2)}.
\end{aligned}
\end{equation}
To find the limit of the most complicated term~$\langle h_m,\Pi_m\xi\rangle_{L^2(0,T;L^2)}$ in~\eqref{lim weak1}, we first integrate by parts 
twice with respect to the time variable:
\begin{equation}\label{lim weak3}
\begin{aligned}
h_m(t)=\int_0^t \nabla\cdot(\textbf{F}(s)\partial_t^{1-\alpha}u_m(s))\,ds
    &=\nabla\cdot(\textbf{F}(t)J^{\alpha}u_m(t))
    -\int_0^t \nabla\cdot(\textbf{F}'(s)J^{\alpha}u_m(s))\,ds\\
&=\nabla\cdot(\textbf{F}(t)J^{\alpha}u_m(t))
    -\nabla\cdot(\textbf{F}'(t)J^{1+\alpha}u_m(t))\\
&\qquad{}+\int_0^t \nabla\cdot(\textbf{F}''(s)J^{1+\alpha}u_m(s))\,ds.
\end{aligned}
\end{equation} 
It now follows from the boundedness of $\{J^{\alpha}u_m\}_{m=1}^\infty$ and 
$\{J^{1+\alpha}u_m\}_{m=1}^\infty$ in $L^2\bigl(0,T;H^2(\Omega)\bigr)$ and 
$L^\infty\bigl(0,T;H^1_0(\Omega)\bigr)$, respectively, that 
$\{h_m\}_{m=1}^\infty$ is bounded in~$L^2\bigl(0,T;L^2(\Omega)\bigr)$. 
Hence,
\begin{equation}\label{lim:weak4}
\lim_{m\rightarrow\infty} \langle h_m,\Pi_m\xi - \xi \rangle_{L^2(0,T;L^2)} = 0.
\end{equation}
On the other hand, by using~\eqref{lim weak3} and integration by parts with 
respect to~$x$, we have
\begin{align*}
\langle h_m,\xi \rangle_{L^2(0,T;L^2)}
&=\langle \nabla\cdot(\textbf{F}J^{\alpha}u_m),\xi \rangle_{L^2(0,T;L^2)}
    -\langle\nabla\cdot \left(\textbf{F}'J^{1+\alpha}u_m \right),
            \xi \rangle_{L^2(0,T;L^2)}\\
&\qquad{}-\int_0^T\int_\Omega \biggl(
    \int_0^t \textbf{F}''(s)J^{1+\alpha}u_m(s)\,ds
    \biggr)\cdot\nabla\xi(t)dx\,dt.
\end{align*}
Combining this identity with~\eqref{con:6 weak}--\eqref{con:5 weak} gives
\begin{align*}
\lim_{m\rightarrow\infty} \langle h_m,\xi \rangle_{L^2(0,T;L^2)}
	&=\langle \nabla\cdot(\textbf{F}J^{\alpha}u),\xi \rangle_{L^2(0,T;L^2)}
		-\langle\nabla\cdot(\textbf{F}'J^{1+\alpha}u),\xi \rangle_{L^2(0,T;L^2)}\\
	&\qquad-\int_0^T\int_\Omega \biggl(
    \int_0^t \textbf{F}''(s)J^{1+\alpha}u(s)\,ds\biggr)\cdot\nabla\xi(t)dx\,dt\\
&=\langle \nabla\cdot(\textbf{F}J^{\alpha}u),\xi \rangle_{L^2(0,T;L^2)}
- \int_0^T\int_\Omega \nabla\cdot\biggl(
    \int_0^t \textbf{F}'(s)J^{\alpha}u(s)\,ds\biggr)\,\xi(t)\,dx\,dt.
\end{align*}
Now invoking~\eqref{lim:weak4} yields
\begin{equation}
\lim_{m\rightarrow\infty} \langle h_m,\Pi_m\xi \rangle_{L^2(0,T;L^2)}
	=\langle \nabla\cdot(\textbf{F}J^{\alpha}u),\xi \rangle_{L^2(0,T;L^2)}  
	  -\int_0^T\int_\Omega \nabla\cdot\left(\int_0^t\textbf{F}'(s)J^{\alpha}u(s)\,ds\right)\,\xi(t)\,dx\,dt. \label{lim:weak5}
\end{equation}

Let $m\rightarrow\infty$ in~\eqref{lim weak1}. Using \eqref{lim weak2}~and \eqref{lim:weak5}, we deduce that for 
any~$\xi\in C_c^\infty((0,T)\times\Omega)$ one has
\begin{align*}
\biggl\langle u-\kappa_\alpha J^{\alpha}\Delta u 
	+ \nabla\cdot(\textbf{F}J^{\alpha}u)
-\nabla\cdot\biggl(\int_0^t \textbf{F}'(s)J^{\alpha}u(s)\,ds\biggr),
    \xi\biggr\rangle_{L^2(0,T;L^2)}
	= \langle g,\xi \rangle_{L^2(0,T;L^2)}.
\end{align*}
Since $C_c^\infty((0,T)\times\Omega)$ is dense in $L^2((0,T)\times\Omega)$, the above equation also holds true for any test function $\xi\in L^2((0,T)\times\Omega)$. Hence, $u$ satisfies~\eqref{prob0} a.e. on $(0,T)\times\Omega$.

The weak convergence of $u_m$ described in \eqref{con:3 weak},  
and~\cite[Corollary II.2.8]{BoyerFabrie2013} with \eqref{eq: pri weak00} 
together yield $\|u\|_{L^2(0,T;L^2)}^2\le C_3\|G\|_{L^2(0,T;L^2)}^2$. 
Similarly,  \eqref{con:5 weak} and \eqref{eq: pri weak000} imply that
$\kappa_\alpha\|J^\alpha\Delta u\|_{L^2(0,T;H^2)}^2
\leq C_4\|G\|_{L^2(0,T;L^2)}^2$. Thus, \eqref{eq:ubound0} is proved.

The uniqueness of the solution~$u$ follows from linearity~and \eqref{eq:ubound0},
because if $u_0=0$~and $g=0$, then $G=0$ and hence $u=0$.
\end{proof}

\section{Existence and uniqueness of the classical solution}
\label{sec:classical}
\begin{assumption}\label{ass:12}
In the rest of this paper, we assume that  
\[
\frac12<\alpha<1. 
\]
\end{assumption}

Assumption~\ref{ass:12} is not overly restrictive because \eqref{prob} is 
usually considered as a variant of the case~$\alpha=1$.  
We cannot avoid this restriction on $\alpha$ in Sections~\ref{sec:classical} and~\ref{sec:regclassical} since our analysis makes heavy use 
of $\partial_t^{1-\alpha}u$, and for typical solutions~$u$ of~\eqref{prob}, it 
will turn out that $\|\partial_t^{1-\alpha}u\|_{L^2(0,T;L^2(\Omega))} < \infty$ 
only for $1/2<\alpha<1$. To see this heuristically, assume that 
$u(x,t)=\phi(x)+v(x,t)$, where $v$ vanishes as~$t\to 0$ so $\phi(x)$ is the 
dominant component near~$t=0$; then $\partial_t^{1-\alpha}u(x,t) = 
\phi(x)\omega_\alpha(t)+\partial_t^{1-\alpha}v(x,t) \approx 
\phi(x)\omega_\alpha(t)$ near~$t=0$, and $\int_0^T \omega_\alpha^2(t)\,dt$ is 
finite only if $\alpha> 1/2$.

\subsection{\emph{A priori} estimates}
Since $\partial_t^{1-\alpha}1 = \omega_\alpha(t)$, we can rewrite~\eqref{dmka2} in terms of 
\[
v_m(t):= u_m(t)-u_{0m}
\] 
as
\begin{equation}\label{dmka3}
v_m'-\kappa_\alpha\partial_t^{1-\alpha}\Delta v_m
	+\Pi_m\bigl(\nabla\cdot(\textbf{F}(t)\partial_t^{1-\alpha}v_m)\bigr)\\
	= \Pi_m g(t)-\omega_\alpha(t)\bigl[ 
    \Pi_m\bigl(\nabla\cdot(\textbf{F}(t)u_{0m}\bigr)
	-\kappa_\alpha\Delta u_{0m}\bigr].
\end{equation}
We will require the following bound for~$u_{0m}$.

\begin{lemma}\label{lem:u0m}
If $u_0\in H^2(\Omega)$, then 
$\|u_{0m}\|_{H^2(\Omega)}\le C_{\mathrm{R}}\|u_0\|_{H^2(\Omega)}$ for all $m$, where the constant~$C_R$ was defined in~\eqref{CR}.
\end{lemma}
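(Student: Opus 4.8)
The plan is to reduce the claimed $H^2$ estimate to an $L^2$ estimate for the Laplacian and then to use that the Galerkin projector~$\Pi_m$ commutes with~$-\Delta$. Since $u_{0m}=\Pi_mu_0$ lies in $W_m\subset H^2(\Omega)\cap H^1_0(\Omega)$, the elliptic regularity estimate~\eqref{CR} applies and gives
\[
\|u_{0m}\|_{H^2(\Omega)}\le C_{\mathrm{R}}\|\Delta u_{0m}\|.
\]
Thus it suffices to prove $\|\Delta u_{0m}\|\le\|u_0\|_{H^2(\Omega)}$, and for that it is enough to establish the identity $-\Delta u_{0m}=\Pi_m(-\Delta u_0)$: indeed, since $\Pi_m$ is an orthogonal projection on~$L^2(\Omega)$, this yields $\|\Delta u_{0m}\|=\|\Pi_m(\Delta u_0)\|\le\|\Delta u_0\|\le\|u_0\|_{H^2(\Omega)}$.

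To verify $-\Delta u_{0m}=\Pi_m(-\Delta u_0)$, write $\lambda_k>0$ for the eigenvalue attached to~$w_k$, so $-\Delta w_k=\lambda_kw_k$. Both $-\Delta u_{0m}$ and $\Pi_m(-\Delta u_0)$ belong to~$W_m$, so it suffices to compare their inner products with $w_k$ for $k=1,\dots,m$. Using Green's identity $\langle-\Delta v,w\rangle=\langle\nabla v,\nabla w\rangle$ (valid whenever $v\in H^2(\Omega)\cap H^1_0(\Omega)$ and $w\in H^1_0(\Omega)$) together with $-\Delta w_k=\lambda_kw_k$ and $\langle u_{0m},w_k\rangle=\langle u_0,w_k\rangle$, one computes
\[
\langle-\Delta u_{0m},w_k\rangle=\langle\nabla u_{0m},\nabla w_k\rangle=\lambda_k\langle u_{0m},w_k\rangle=\lambda_k\langle u_0,w_k\rangle=\langle\nabla u_0,\nabla w_k\rangle=\langle-\Delta u_0,w_k\rangle,
\]
and the right-hand side equals $\langle\Pi_m(-\Delta u_0),w_k\rangle$ because $k\le m$. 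Combining this with the first paragraph proves the lemma.

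I do not expect a genuine obstacle here; the one point to watch is that the step $\lambda_k\langle u_0,w_k\rangle=\langle\nabla u_0,\nabla w_k\rangle$ applies Green's identity with $w=u_0$, so it requires $u_0\in H^1_0(\Omega)$ as well as $u_0\in H^2(\Omega)$, which is available wherever this lemma is invoked. If one prefers to bypass the commutation identity, an equivalent argument expands $u_0=\sum_kc_kw_k$ with $c_k=\langle u_0,w_k\rangle$, notes that $\{\lambda_kc_k\}_k$ are the Fourier coefficients of $-\Delta u_0\in L^2(\Omega)$, and concludes by Parseval that $\|\Delta u_{0m}\|^2=\sum_{k=1}^m\lambda_k^2c_k^2\le\sum_{k=1}^\infty\lambda_k^2c_k^2=\|\Delta u_0\|^2$.
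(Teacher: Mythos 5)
Your proof is correct and follows essentially the same route as the paper: reduce to $\|\Delta u_{0m}\|\le\|\Delta u_0\|$ via the elliptic regularity bound~\eqref{CR}, and obtain that inequality from the eigenfunction expansion — your commutation identity $-\Delta u_{0m}=\Pi_m(-\Delta u_0)$ is just a repackaging of the paper's Parseval computation $\|\Delta u_{0m}\|^2=\sum_{k=1}^m\lambda_k^2|\langle u_0,w_k\rangle|^2\le\|\Delta u_0\|^2$. Your remark that the step $\lambda_k\langle u_0,w_k\rangle=\langle-\Delta u_0,w_k\rangle$ tacitly uses $u_0\in H^1_0(\Omega)$ is accurate and applies equally to the paper's version, where the lemma is only ever invoked with $u_0\in H^2(\Omega)\cap H^1_0(\Omega)$.
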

\begin{proof}
Write $d^k_{0m}=d^k_m(0)=\langle u_0,w_k\rangle$ for $1\le k\le m$, and let 
$\lambda_k>0$ denote the $k$th Dirichlet eigenvalue of the Laplacian so that 
$-\Delta w_k=\lambda_kw_k$ for all~$k$.  In this way,
\[
u_{0m}(x)=\sum_{k=1}^md^k_{0m}w_k(x)
\quad\text{and}\quad
-\Delta u_{0m}(x)=\sum_{k=1}^md^k_{0m}\lambda_kw_k(x).
\]
If $u_0\in H^2(\Omega)$ then $\Delta u_0\in L_2(\Omega)$ so, using Parseval's 
identity,
\[
\|\Delta u_{0m}\|^2=\sum_{k=1}^m|\langle\Delta u_{0m},w_k\rangle|^2
	=\sum_{k=1}^m\lambda_k^2|\langle u_0,w_k\rangle|^2
	\le\sum_{k=1}^\infty\lambda_k^2|\langle u_0,w_k\rangle|^2
	=\sum_{k=1}^\infty |\langle u_0, \Delta w_k\rangle|^2 
	=\|\Delta u_0\|^2.
\]
Thus, $\|u_{0m}\|_{H^2(\Omega)}\le C_{\mathrm{R}}\|\Delta u_{0m}\|
\le C_{\mathrm{R}}\|\Delta u_0\|\le C_{\mathrm{R}}\|u_0\|_{H^2(\Omega)}$.
\end{proof}

We now prove upper bounds for $\|\partial_t^{1-\alpha}v_m(t)\|$ and $J^\alpha\bigl(\|\partial_t^{1-\alpha}\nabla v_m\|^2\bigr)(t)$ for any $t\in[0,T]$.  The argument used in the following lemma is based on the proof 
of~\cite[Theorem 3.1]{HLeS17}.

\begin{lemma}\label{thm:energyum1}
Let $m$ be a positive integer. Let $v_m(t)$ be the absolutely continuous 
solution of~\eqref{dmka3} that is guaranteed by Lemma~\ref{lem:existdmk}. Then,
for almost all $t\in [0,T]$,
\begin{equation}
\|\partial_t^{1-\alpha}v_m(t)\|^2 
	\le E_{2\alpha-1}\bigl(C_7t^{2\alpha-1}\bigr) 
	\biggl(C_6\|u_{0}\|_{H^2(\Omega)}^2+\int_0^t\|g(s)\|^2\,ds\biggr)
    \label{thmenergy1}
\end{equation}
and  
\begin{align}
J^\alpha\bigl(\|\partial_t^{1-\alpha}\nabla v_m\|^2\bigr)(t) 
\le\frac{1+C_7E_{2\alpha-1}\bigl(C_7t^{2\alpha-1}\bigr) 
\omega_{2\alpha}(t)}{\kappa_\alpha}
	\biggl(C_6\|u_0\|_{H^2(\Omega)}^2+\int_0^t\|g(s)\|^2\,ds\biggr),
\label{thmenergy2}
\end{align}
where 
\[
C_6:= C_{\mathrm{R}}^2 \bigl(\kappa_\alpha+\|\mathbf{F}\|_{1,\infty}\bigr)^2
\quad\text{and}\quad
C_7:=\frac{\Gamma(2\alpha-1)}{\Gamma(\alpha)^2}\biggl(1
	+\frac{T^{2\alpha-1}}{(2\alpha-1)\Gamma(\alpha)^2}
	+\frac{\|\mathbf{F}\|_\infty^2\Gamma(\alpha)T^{1-\alpha}}{\kappa_\alpha\Gamma(2\alpha -1)}
\biggr).
\]
\end{lemma}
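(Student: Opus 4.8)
The plan is to work with the shifted unknown $v_m=u_m-u_{0m}$, which satisfies~\eqref{dmka3}, and with its Riemann--Liouville derivative $w_m:=\partial_t^{1-\alpha}v_m$. Since $v_m$ is absolutely continuous with $v_m(0)=0$ (Lemma~\ref{lem:existdmk}), one has $w_m=J^\alpha v_m'$, $\partial_t^{1-\alpha}\Delta v_m=\Delta w_m$, and $w_m(t)\in W_m\subset H^1_0(\Omega)$ for almost every~$t$. First I would bound the ``initial-data'' part of the forcing on the right of~\eqref{dmka3}: Lemma~\ref{lem:u0m}, the elliptic estimate~\eqref{CR} and Lemma~\ref{lem:Fphi} give
\[
\bigl\|\kappa_\alpha\Delta u_{0m}-\Pi_m\bigl(\nabla\cdot(\mathbf{F}(t)u_{0m})\bigr)\bigr\|
\le(\kappa_\alpha+\|\mathbf{F}\|_{1,\infty})C_{\mathrm{R}}\|u_0\|_{H^2(\Omega)}=\sqrt{C_6}\,\|u_0\|_{H^2(\Omega)},
\]
so the right-hand side $R_m(t)$ of~\eqref{dmka3} satisfies $\|R_m(t)\|\le\|g(t)\|+\omega_\alpha(t)\sqrt{C_6}\,\|u_0\|_{H^2(\Omega)}$.

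The heart of the argument is to take the $L^2(\Omega)$ inner product of~\eqref{dmka3} with $w_m(t)$. Integrating by parts in~$x$ (using $w_m\in H^1_0(\Omega)$) turns the diffusion term into $\kappa_\alpha\|\nabla w_m(t)\|^2$ and the advection term into $-\langle\mathbf{F}(t)w_m(t),\nabla w_m(t)\rangle$, and Young's inequality absorbs the latter into half of the former, leaving, for almost every~$t$,
\[
\langle v_m'(t),J^\alpha v_m'(t)\rangle+\tfrac{\kappa_\alpha}{2}\|\nabla w_m(t)\|^2
\le\tfrac{\|\mathbf{F}\|_\infty^2}{2\kappa_\alpha}\|w_m(t)\|^2+\|R_m(t)\|\,\|w_m(t)\|.
\]
Applying the fractional integral $J^\alpha$ to this inequality (legitimate since $\omega_\alpha\ge0$) and invoking the first estimate of Lemma~\ref{lem:pd} --- this is exactly where $\alpha>1/2$ (Assumption~\ref{ass:12}) is needed --- replaces $J^\alpha\langle J^\alpha v_m',v_m'\rangle(t)$ by its lower bound $\tfrac12\|J^\alpha v_m'(t)\|^2=\tfrac12\|w_m(t)\|^2$. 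Doubling gives the basic inequality
\[
\|w_m(t)\|^2+\kappa_\alpha J^\alpha\bigl(\|\nabla w_m\|^2\bigr)(t)
\le\frac{\|\mathbf{F}\|_\infty^2}{\kappa_\alpha}J^\alpha\bigl(\|w_m\|^2\bigr)(t)
+2J^\alpha\bigl(\|R_m\|\,\|w_m\|\bigr)(t).
\]

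The step I expect to need the most care is recasting the right-hand side as $a(t)+C_7\,J^{2\alpha-1}(\|w_m\|^2)(t)$, where $a(t):=C_6\|u_0\|_{H^2(\Omega)}^2+\int_0^t\|g(s)\|^2\,ds$, since that is the form required by the fractional Gronwall lemma. For the first term, the elementary bound $(t-s)^{\alpha-1}\le T^{1-\alpha}(t-s)^{2\alpha-2}$ (valid as $1-\alpha>0$) dominates $J^\alpha(\|w_m\|^2)(t)$ by a constant multiple of $J^{2\alpha-1}(\|w_m\|^2)(t)$. For $2J^\alpha(\|R_m\|\,\|w_m\|)(t)$, split $\|R_m\|\le\|g\|+\omega_\alpha\sqrt{C_6}\|u_0\|_{H^2(\Omega)}$ and, to each of the two resulting pieces, apply the weighted Young inequality $2\,\omega_\alpha(t-s)\|w_m(s)\|\,\phi(s)\le\lambda^{-1}\omega_\alpha(t-s)^2\|w_m(s)\|^2+\lambda\,\phi(s)^2$; integrating in~$s$, the $\omega_\alpha(t-s)^2\|w_m(s)\|^2$-terms become multiples of $\tfrac{\Gamma(2\alpha-1)}{\Gamma(\alpha)^2}J^{2\alpha-1}(\|w_m\|^2)(t)$, while (taking $\lambda=1$ for the $\|g\|$-piece, and $\lambda$ a suitable $t$-independent constant for the $\omega_\alpha\sqrt{C_6}\|u_0\|_{H^2(\Omega)}$-piece, using that $\int_0^T\omega_\alpha(s)^2\,ds=\tfrac{T^{2\alpha-1}}{(2\alpha-1)\Gamma(\alpha)^2}<\infty$ because $2\alpha-1>0$) the $\lambda\phi^2$-terms integrate to exactly $\int_0^t\|g\|^2\,ds$ and $C_6\|u_0\|_{H^2(\Omega)}^2$. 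Collecting the three contributions identifies the constant $C_7$.

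It then remains to discard the non-negative term $\kappa_\alpha J^\alpha(\|\nabla w_m\|^2)(t)$ and apply Lemma~\ref{lem:Gronwall} with $\beta=2\alpha-1$, $b\equiv C_7$ and $y(t)=\|w_m(t)\|^2$ (note $a$ is non-decreasing), which yields~\eqref{thmenergy1}. Substituting~\eqref{thmenergy1} back into the basic inequality and using the monotonicity of $s\mapsto a(s)E_{2\alpha-1}(C_7s^{2\alpha-1})$ gives $J^{2\alpha-1}(\|w_m\|^2)(t)\le a(t)E_{2\alpha-1}(C_7t^{2\alpha-1})\int_0^t\omega_{2\alpha-1}(t-s)\,ds=a(t)E_{2\alpha-1}(C_7t^{2\alpha-1})\,\omega_{2\alpha}(t)$, whence $\kappa_\alpha J^\alpha(\|\nabla w_m\|^2)(t)\le a(t)\bigl[1+C_7E_{2\alpha-1}(C_7t^{2\alpha-1})\omega_{2\alpha}(t)\bigr]$, which is~\eqref{thmenergy2}.
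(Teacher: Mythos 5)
Your proposal is correct and follows essentially the same route as the paper: test \eqref{dmka3} against $\partial_t^{1-\alpha}v_m=J^\alpha v_m'$, apply $J^\alpha$ and invoke Lemma~\ref{lem:pd} (where $\alpha>1/2$ enters), recast the right-hand side as $a(t)+C_7J^{2\alpha-1}(\|\partial_t^{1-\alpha}v_m\|^2)(t)$ using the kernel identity $\omega_\alpha^2=\tfrac{\Gamma(2\alpha-1)}{\Gamma(\alpha)^2}\omega_{2\alpha-1}$ together with $(t-s)^{\alpha-1}\le t^{1-\alpha}(t-s)^{2\alpha-2}$, apply the fractional Gronwall inequality, and substitute back for the gradient bound. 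The only cosmetic difference is that you use a pointwise weighted Young inequality under the integral sign where the paper uses Cauchy--Schwarz followed by the arithmetic--geometric mean inequality; the resulting bounds are the same.
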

\begin{proof}
For notational convenience, set $z_m(t):= \partial_t^{1-\alpha}v_m(t) \in W_m$.
Taking the inner product of both sides of~\eqref{dmka3} 
with~$z_m(t)$ and integrating by parts with respect to~$x$, we 
obtain
\begin{equation}\label{eq:wh} 
\langle v_m', z_m\rangle+\kappa_\alpha\|\nabla z_m\|^2  
	=\langle g(t),z_m\rangle +\langle \textbf{F}(t)z_m,\nabla z_m\rangle 
    	-\bigl\langle     \nabla\cdot\bigl(\textbf{F}(t)u_{0m}\bigr)
		-\kappa_\alpha\Delta u_{0m},z_m\bigr\rangle\,\omega_\alpha(t).  
\end{equation}
By the Cauchy--Schwarz and  arithmetic--geometric inequalities, one has
\[
\bigl|\langle\textbf{F}(t)z_m,\nabla z_m\rangle\bigr| 
    \leq \|\textbf{F}\|_\infty \|z_m\| \|\nabla z_m\|
    \leq\frac{\kappa_\alpha}{2}\|\nabla z_m\|^2
    +\frac{\|\textbf{F}\|_\infty^2}{2\kappa_\alpha}\|z_m\|^2
\]
and, using Lemma~\ref{lem:Fphi},
\[
\bigl|\bigl\langle
\nabla\cdot(\textbf{F}(t)u_{0m})-\kappa_\alpha\Delta u_{0m},z_m
	\bigr\rangle\bigr|
\le\bigl[\|\mathbf{F}\|_{1,\infty}\|u_{0m}\|_{H^1(\Omega)}
	+\kappa_\alpha\|\triangle u_{0m}\|\bigr]\|z_m\|.
\]
Substituting these bounds into~\eqref{eq:wh} and then applying 
Lemma~\ref{lem:u0m}, we obtain
\begin{equation}\label{M0}
\langle v_m', z_m\rangle+\frac{\kappa_\alpha}{2}\|\nabla z_m\|^2 
	\leq \|g(t)\|\,\|z_m\| 
    +\frac{\|\textbf{F}\|_\infty^2}{2\kappa_\alpha}\|z_m\|^2 
    + \sqrt{C_6}\,\|u_0\|_{H^2(\Omega)}\|z_m\|\,\omega_\alpha(t).
\end{equation}
But $v_m(0)=0$, so $z_m=\partial_t^{1-\alpha}v_m 
={}^C\partial_t^{1-\alpha}v_m=J^\alpha(v_m')$ and thus
$\langle v_m', z_m\rangle=\langle v_m', J^\alpha(v_m')\rangle$. 
Applying~$J^\alpha$ to both sides of~\eqref{M0} and invoking 
Lemma~\ref{lem:pd} to handle the first term, we get
\begin{align}
\|z_m\|^2 + \kappa_\alpha J^\alpha\left(\|\nabla z_m\|^2\right) 
	&\leq 2J^\alpha\left(\|g\| \|z_m\|\right) 
		+\frac{\|\textbf{F}\|_\infty^2}{\kappa_\alpha}
		J^\alpha\left(\|z_m\|^2\right)  \notag\\
	 &\qquad{}+2\sqrt{C_6}\,\|u_0\|_{H^2(\Omega)}
		\,J^\alpha(\|z_m\|\,\omega_\alpha)
\quad\text{for $0<t\le T$.}  \label{M1}
\end{align}
By the Cauchy--Schwarz and arithmetic-geometric mean inequalities,
\[
\begin{aligned}
2J^\alpha(\|g\|\|z_m\|)(t) &= 2 \int_{s=0}^t \frac{(t-s)^{\alpha-1}}{\Gamma(\alpha)}\|g(s)\|\|z_m(s)\|\,ds \\
	&\le2\biggl(\int_{s=0}^t |g(s)|^2\,ds\biggr)^{1/2}
		\biggl(\int_{s=0}^t\frac{(t-s)^{2\alpha-2}}{\Gamma(\alpha)^2}\, 
			\|z_m(s)\|^2\,ds\biggr)^{1/2}\\
	&\le\int_{s=0}^t \|g(s)\|^2\,ds
	+\frac{\Gamma(2\alpha-1)}{\Gamma(\alpha)^2}
	\int_{s=0}^t\omega_{2\alpha-1}(t-s)\|z_m(s)\|^2\,ds,
\end{aligned}
\]
and if $0\le s<t$, then $(t-s)^{\alpha-1} = (t-s)^{1-\alpha} (t-s)^{2\alpha-2} 
\le t^{1-\alpha}(t-s)^{2\alpha-2}$, so 
\[
\frac{\|\textbf{F}\|_\infty^2}{\kappa_\alpha}\,J^\alpha(\|z_m\|^2)(t) 
\le\frac{\|\textbf{F}\|_\infty^2t^{1-\alpha}}{\kappa_\alpha\Gamma(\alpha)}
	\int_{s=0}^t(t-s)^{2\alpha-2}\|z_m(s)\|^2\,ds.
\]
For the final term in~\eqref{M1}, we have
\[
2\sqrt{C_6}\,\|u_0\|_{H^2(\Omega)}\,J^\alpha(\|z_m\|\,\omega_\alpha)
	\le C_6\|u_0\|_{H^2(\Omega)}^2
	+\bigl[J^\alpha(\|z_m\|\omega_\alpha)\big]^2
\]
with
\[
\begin{aligned}
\bigl[J^\alpha(\|z_m\|\omega_\alpha)(t)\bigr]^2 
	&=\biggl(\int_0^t\omega_\alpha(t-s)\|z_m(s)\|\,
		\omega_\alpha(s)\,ds\biggr)^2\\
	&\le\biggl(\int_0^t\omega_\alpha(s)^2\,ds\biggr)
	\biggl(\int_0^t\omega_\alpha(t-s)^2\|z_m(s)\|^2\,ds\biggr)\\
	&=\frac{\Gamma(2\alpha-1)t^{2\alpha-1}}{(2\alpha-1)\Gamma(\alpha)^4}
	\int_0^t\omega_{2\alpha-1}(t-s)\|z_m(s)\|^2\,ds.
\end{aligned}
\]
Hence, \eqref{M1} yields
\begin{multline}\label{M2}
\|z_m(t)\|^2+\kappa_\alpha J^\alpha\left(\|\nabla z_m\|^2\right)(t) 
	\le C_6\|u_0\|_{H^2(\Omega)}^2+\int_0^t\|g(s)\|^2\,ds\\
	+C_7\int_0^t\omega_{2\alpha-1}(t-s)\|z_m(s)\|^2\,ds
\quad\text{for $0<t\le T$.}
\end{multline}
Discard the $\kappa_\alpha$ term and then apply the fractional Gronwall 
inequality (Lemma~\ref{lem:Gronwall}) to get 
\eqref{thmenergy1}. Finally, after substituting the bound~\eqref{thmenergy1} 
into the right-hand side of~\eqref{M2}, it is straightforward to 
deduce~\eqref{thmenergy2}. 
\end{proof}

The next corollary follows easily from Lemma~\ref{thm:energyum1}.

\begin{corollary}\label{cor:energyum1}
\begin{align*}
\max_{0\le t\le T} \|\partial_t^{1-\alpha}v_m(t)\|^2
	&\le C_8\bigl[\|u_0\|^2_{H^2(\Omega)}+\|g\|^2_{L^2}\bigr], \\
\|\partial_t^{1-\alpha}\nabla v_m\|_{L_\alpha^2}^2
	&\le C_9\bigl[\|u_0\|^2_{H^2(\Omega)}+\|g\|^2_{L^2}\bigr]. 
\end{align*}
Here, for $i=8, 9$,  the constants $C_i = C_i(\alpha, T, \kappa_\alpha, \|\textbf{F}\|_{1,\infty})$  blow up as $\alpha\to(1/2)^+$ but are bounded as $\alpha\to 1^-$.
\end{corollary}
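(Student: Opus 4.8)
The plan is to obtain both inequalities directly from the pointwise-in-$t$ bounds~\eqref{thmenergy1}~and~\eqref{thmenergy2} of Lemma~\ref{thm:energyum1} by taking the supremum over $t\in[0,T]$, exploiting monotonicity to locate that supremum at the endpoint $t=T$. For the first inequality, I would note that the Mittag--Leffler function $E_{2\alpha-1}$ has a power series with nonnegative coefficients and is therefore non-decreasing on $[0,\infty)$; since $2\alpha-1>0$ by Assumption~\ref{ass:12}, the map $t\mapsto C_7t^{2\alpha-1}$ is non-decreasing, so $E_{2\alpha-1}(C_7t^{2\alpha-1})\le E_{2\alpha-1}(C_7T^{2\alpha-1})$ for $0\le t\le T$. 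Combining this with $\int_0^t\|g(s)\|^2\,ds\le\|g\|_{L^2}^2$ and $C_6\|u_0\|_{H^2(\Omega)}^2+\|g\|_{L^2}^2\le\max\{C_6,1\}\bigl(\|u_0\|_{H^2(\Omega)}^2+\|g\|_{L^2}^2\bigr)$, and then maximising over $t$ in~\eqref{thmenergy1}, gives the first bound with $C_8:=\max\{C_6,1\}\,E_{2\alpha-1}\bigl(C_7T^{2\alpha-1}\bigr)$.

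For the second inequality, recall from Definition~\ref{def:weightnorm} that $\|\partial_t^{1-\alpha}\nabla v_m\|_{L^2_\alpha}^2=\max_{0\le t\le T}J^\alpha\bigl(\|\partial_t^{1-\alpha}\nabla v_m\|^2\bigr)(t)$, so it suffices to bound the right-hand side of~\eqref{thmenergy2} uniformly in $t$. The prefactor $\kappa_\alpha^{-1}\bigl(1+C_7E_{2\alpha-1}(C_7t^{2\alpha-1})\,\omega_{2\alpha}(t)\bigr)$ is non-decreasing in $t$, because $\omega_{2\alpha}(t)=t^{2\alpha-1}/\Gamma(2\alpha)$ is increasing (again using $2\alpha-1>0$) and $E_{2\alpha-1}(C_7t^{2\alpha-1})$ is increasing as above. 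Together with $\int_0^t\|g(s)\|^2\,ds\le\|g\|_{L^2}^2$ and the same elementary inequality for the data, this yields the second bound with $C_9:=\max\{C_6,1\}\,\kappa_\alpha^{-1}\bigl(1+C_7E_{2\alpha-1}(C_7T^{2\alpha-1})\,\omega_{2\alpha}(T)\bigr)$.

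It remains to justify the claimed behaviour of $C_8$~and~$C_9$ as $\alpha$ approaches the endpoints, which reduces to tracking the building blocks $C_6=C_{\mathrm R}^2(\kappa_\alpha+\|\textbf F\|_{1,\infty})^2$, $C_7$, $\omega_{2\alpha}(T)$~and~$E_{2\alpha-1}\bigl(C_7T^{2\alpha-1}\bigr)$. As $\alpha\to1^-$ one has $\Gamma(2\alpha-1)\to1$, $\Gamma(\alpha)\to1$, $(2\alpha-1)^{-1}\to1$, $T^{2\alpha-1}\to T$~and~$T^{1-\alpha}\to1$, so $C_7$ converges to the finite value $1+T+\kappa_\alpha^{-1}\|\textbf F\|_\infty^2$, while $\omega_{2\alpha}(T)\to T$ and $E_{2\alpha-1}\to E_1=\exp$; hence $C_8$~and~$C_9$ stay bounded, consistent with \eqref{prob} becoming the classical Fokker--Planck equation in this limit. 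As $\alpha\to(1/2)^+$, both $\Gamma(2\alpha-1)\to+\infty$ and $(2\alpha-1)^{-1}\to+\infty$, so $C_7\to+\infty$, whence $C_7T^{2\alpha-1}\to+\infty$ and $E_{2\alpha-1}(C_7T^{2\alpha-1})\to+\infty$; therefore $C_8$~and~$C_9$ blow up. I do not anticipate a genuine obstacle---the corollary is a routine consequence of Lemma~\ref{thm:energyum1}---the only points needing a little care being the monotonicity of the $t$-dependent prefactors, which hinges on $2\alpha-1>0$, i.e.\ on Assumption~\ref{ass:12}, and the bookkeeping of the Gamma-function and Mittag--Leffler factors near $\alpha=1/2$~and~$\alpha=1$.
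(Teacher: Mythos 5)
Your argument is correct and is exactly the intended derivation: the paper gives no written proof (it states only that the corollary ``follows easily from Lemma~\ref{thm:energyum1}''), and your filling-in --- maximising the right-hand sides of \eqref{thmenergy1}~and \eqref{thmenergy2} over $t$ via the monotonicity of $E_{2\alpha-1}$ and of $t\mapsto t^{2\alpha-1}$ (which uses $2\alpha-1>0$), bounding $\int_0^t\|g\|^2\,ds$ by $\|g\|_{L^2}^2$, and then tracking $\Gamma(2\alpha-1)$ and $(2\alpha-1)^{-1}$ near the endpoints --- is precisely what is meant. The only cosmetic caveat is that the pointwise bound of the lemma holds for almost all $t$, so the first maximum should be read as an essential supremum (a point the paper also glosses over), and your $C_6$, hence $C_8$~and $C_9$, also carry a dependence on $\Omega$ through $C_{\mathrm{R}}$, consistent with the paper's standing conventions.
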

Corollary~\ref{cor:energyum1} implies an $L^2(\Omega)$ bound on $u_m(t)$, which we give in Corollary~\ref{cor:umL2}.

\begin{corollary}\label{cor:umL2}
With $C_8$ as in Corollary~\ref{cor:energyum1}, one has
\begin{subequations}
\begin{align}
\|v_m(t)\|^2&\le C_8\omega_{2-\alpha}^2(t)
\bigl[\|u_0\|^2_{H^2(\Omega)}+\|g\|^2_{L^2}\bigr] 
\quad\text{for $0\le t\le T$,}  \label{umL2a} 
\intertext{and}
\|v_m\|_{L_\alpha^2}^2
	&\le \frac{C_8T^{2-\alpha}}{\Gamma(2-\alpha)}
\bigl[\|u_0\|^2_{H^2(\Omega)}+\|g\|^2_{L^2}\bigr]. \label{umL2b}
\end{align}
\end{subequations}
\end{corollary}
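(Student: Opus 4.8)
The key structural fact is that $v_m(0)=0$: this is exactly what was used in the proof of Lemma~\ref{thm:energyum1} to identify $\partial_t^{1-\alpha}v_m$ with the Caputo derivative ${}^C\partial_t^{1-\alpha}v_m=J^\alpha(v_m')$. Since $v_m$ is absolutely continuous and vanishes at $t=0$, we also have $v_m=J^1(v_m')$, so the semigroup property $J^1=J^{1-\alpha}J^\alpha$ gives
\[
v_m=J^{1-\alpha}\bigl(J^\alpha(v_m')\bigr)=J^{1-\alpha}\bigl(\partial_t^{1-\alpha}v_m\bigr).
\]
Writing $z_m:=\partial_t^{1-\alpha}v_m$ (as in the proof of Lemma~\ref{thm:energyum1}), both estimates will follow by applying the fractional-integral bounds of Section~\ref{sec:preliminaries} to $v_m=J^{1-\alpha}z_m$ with exponent $\beta=1-\alpha\in(0,1/2)$, and then invoking the first bound of Corollary~\ref{cor:energyum1}, namely $\max_{0\le t\le T}\|z_m(t)\|^2\le C_8\bigl[\|u_0\|^2_{H^2(\Omega)}+\|g\|^2_{L^2}\bigr]$.

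To obtain \eqref{umL2a}, apply the first inequality of Lemma~\ref{lem:L1} with $\beta=1-\alpha$ and $\phi=z_m$. Since $t^{1-\alpha}/\Gamma(2-\alpha)=\omega_{2-\alpha}(t)$, this yields $\|v_m(t)\|\le\omega_{2-\alpha}(t)\,\|z_m\|_{L^\infty(0,t;L^2)}$ for $0\le t\le T$; squaring and bounding $\|z_m\|_{L^\infty(0,t;L^2)}^2\le C_8\bigl[\|u_0\|^2_{H^2(\Omega)}+\|g\|^2_{L^2}\bigr]$ gives \eqref{umL2a} at once.

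To obtain \eqref{umL2b}, use Lemma~\ref{lem:minsk} with $\beta=1-\alpha$ and $\phi=z_m$ instead, which gives the pointwise bound $\|v_m(t)\|^2\le\omega_{2-\alpha}(t)\,J^{1-\alpha}(\|z_m\|^2)(t)$. Apply $J^\alpha$ to both sides; since $1-\alpha>0$ the weight $\omega_{2-\alpha}$ is non-decreasing, so $\omega_{2-\alpha}(s)\le\omega_{2-\alpha}(t)$ for $0\le s\le t$, and together with $J^\alpha J^{1-\alpha}=J^1$ this leads to
\[
J^\alpha(\|v_m\|^2)(t)\le\omega_{2-\alpha}(t)\int_0^t\|z_m(s)\|^2\,ds
\le\omega_{2-\alpha}(T)\,T\,\max_{0\le s\le T}\|z_m(s)\|^2.
\]
Since $\|v_m\|_{L^2_\alpha}^2=\max_{0\le t\le T}J^\alpha(\|v_m\|^2)(t)$ by Definition~\ref{def:weightnorm} and $T\,\omega_{2-\alpha}(T)=T^{2-\alpha}/\Gamma(2-\alpha)$, inserting the Corollary~\ref{cor:energyum1} bound yields \eqref{umL2b}. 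I do not expect any real obstacle here: the argument is a short manipulation of fractional integrals once $v_m=J^{1-\alpha}(\partial_t^{1-\alpha}v_m)$ is in hand, and that identity is already essentially recorded in the proof of Lemma~\ref{thm:energyum1}; the only minor point to keep straight is that $\beta=1-\alpha<1/2$ here, so one must use the $L^\infty$-in-time version of Lemma~\ref{lem:L1} rather than its $L^2$-in-time version.
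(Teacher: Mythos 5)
Your proof is correct and follows essentially the same route as the paper: both rest on the identity $v_m=J^{1-\alpha}\bigl(\partial_t^{1-\alpha}v_m\bigr)$ (valid since $v_m(0)=0$ and $v_m$ is absolutely continuous), the fractional-integral bounds of Section~\ref{sec:preliminaries}, and the uniform bound on $\|\partial_t^{1-\alpha}v_m\|$ from Corollary~\ref{cor:energyum1}. The only cosmetic differences are that you use the $L^\infty$ inequality of Lemma~\ref{lem:L1} where the paper uses Lemma~\ref{lem:minsk} for \eqref{umL2a}, and for \eqref{umL2b} you apply $J^\alpha$ directly to the pointwise Lemma~\ref{lem:minsk} bound rather than convolving $\omega_\alpha$ with $\omega_{2-\alpha}^2$; both orderings yield the same constants.
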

\begin{proof}
As $u_m(t)$ is absolutely continuous, we have 
$v_m(t)=(J^1 u_m')(t)=J^{1-\alpha}(J^\alpha u_m')(t)$, where we used 
\cite[Theorem 2.2]{Diet10}. Thus~\cite[Theorem 2.22]{Diet10} can be invoked, 
which yields
\[
v_m(t)=J^{1-\alpha}\partial_t^{1-\alpha}(u_m(t)-u_m(0)) 
	\quad\text{for almost all $t$.}
\]
Set $z_m(t)=\partial_t^{1-\alpha}v_m(t)$, so 
$v_m(t)= J^{1-\alpha}z_m(t)$. Now Lemma~\ref{lem:minsk}~and 
Corollary~\ref{cor:energyum1} give
\begin{align*}
\|v_m(t)\|^2 = \|J^{1-\alpha} z_m(t)\|^2
&\le\omega_{2-\alpha}(t)
	\int_0^t\omega_{1-\alpha}(t-s)\|z_m(s)\|^2\,ds\\
&\le\omega_{2-\alpha}(t) C_8
	\bigl[\|u_0\|^2_{H^2(\Omega)}+\|g\|^2_{L^2(0,T;L^2)}\bigr]
	\int_0^t\omega_{1-\alpha}(t-s)\,ds \\
&=C_8\bigl[\|u_0\|^2_{H^2(\Omega)}+\|g\|^2_{L^2(0,T;L^2)}\bigr]
	\omega_{2-\alpha}^2(t).
\end{align*}
As $u_m(t)$ is continuous, the inequality~\eqref{umL2a} is valid 
for all~$t$.

Next, using~\eqref{umL2a} and the semigroup property 
$\omega_\alpha\ast\omega_\beta=\omega_{\alpha+\beta}$, we  get
\begin{align*}
\|v_m\|_{L_\alpha^2}^2
	&=\max_{0\le t\le T}\int_{s=0}^t\omega_\alpha(t-s)\|v_m(s)\|^2\,ds \\
	&\le C_8\bigl[\|u_0\|^2_{H^2(\Omega)}+\|g\|^2_{L^2(0,T;L^2)}\bigr]
	\max_{0\le t\le T}\int_{s=0}^t\omega_\alpha(t-s)\,
		\omega_{2-\alpha}(s)^2\,ds \\
	&\le C_8\bigl[\|u_0\|^2_{H^2(\Omega)}+\|g\|^2_{L^2(0,T;L^2)}\bigr]
	\biggl(\max_{0\le t\le T}\omega_{2-\alpha}(t)\biggr) 
	\biggl(\max_{0\le t\le T}\omega_2(t)\biggr), 
\end{align*}
which gives~\eqref{umL2b}.
\end{proof}

In the next lemma, we also provide upper bounds 
for~$\{v_m\}_m$ in $W^{1,2}(0,T;L^2)\cap L^2(0,T;H^2)$ and 
$\big\{\partial_t^{1-\alpha}\Delta v_m\big\}_m$ in  $L^2(0,T;L^2)$.  Recall 
that the constant~$\rho_\alpha>0$ was defined in Lemma~\ref{lem:rho}. 

\begin{lemma}\label{thm:energyum2}
Let $m$ be a positive integer. Let $v_m(t)$ be the absolutely continuous 
solution of~\eqref{dmka3} that is guaranteed by Lemma~\ref{lem:existdmk}. Then 
for almost all $t\in [0,T]$, one has
\begin{align}
\|\nabla v_m(t)\|^2 + \kappa_\alpha\rho_\alpha t^{\alpha-1}
	\int_0^t\|\Delta v_m\|^2\,ds 
&\leq\frac{t^{1-\alpha}}{\kappa_\alpha\rho_\alpha}\bigl[
	(C_{10}+C_{11})\|u_0\|_{H^2(\Omega)}^2
	+C_{10}\|g\|_{L^2}^2\bigr], \label{M3} \\
\int_0^t \|v'_m\|^2 \,ds 
&\le(C_{10}+C_{11}) C_R^2\|u_0\|^2_{H^2(\Omega)}+C_{10}\|g\|^2_{L^2}. \label{M7}
\intertext{and}
\int_0^t \|\partial_t^{1-\alpha}\Delta v_m(s)\|^2 \, ds
        &\leq\frac{1}{\kappa_\alpha^2}\bigl[
(C_{10}+C_{11})\|u_0\|^2_{H^2(\Omega)}+C_{10}\|g\|^2_{L^2}\bigr], \label{M6}
\end{align}
where
\[
C_{10}:=3\bigl[1+\|\mathbf{F}\|_{1,\infty}^2(C_8T+C_9\Gamma(\alpha)T^{1-\alpha})\bigr]
\quad\text{and}\quad
C_{11}:=\frac{6C_R^2(\kappa_\alpha^2+\|\mathbf{F}\|_{1,\infty}^2)}%
{(2\alpha-1)\Gamma(\alpha)^2}\,T^{2\alpha-1}.
\]
\end{lemma}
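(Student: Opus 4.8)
The plan is to prove all three inequalities from the single Galerkin identity \eqref{dmka3} by testing against three different multipliers that all lie in $W_m$, namely $-\Delta v_m(t)$ for \eqref{M3}, $-\partial_t^{1-\alpha}\Delta v_m(t)$ for \eqref{M6}, and $v_m'(t)$ for \eqref{M7}; in each case one integrates by parts in~$x$, integrates over~$[0,t]$, uses one of the coercivity Lemmas \ref{lem:pd}, \ref{lem:pd2}, \ref{lem:rho} on the principal part, and estimates the remaining terms by the Cauchy--Schwarz and arithmetic--geometric inequalities together with Lemmas \ref{lem:Fphi} and~\ref{lem:u0m}. Two preliminary observations will be used throughout. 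First, since $v_m(0)=0$ and $v_m$ is absolutely continuous, $\partial_t^{1-\alpha}v_m=J^\alpha v_m'$, hence $\partial_t^{1-\alpha}\nabla v_m=J^\alpha\nabla v_m'$ and $\partial_t^{1-\alpha}\Delta v_m=J^\alpha\Delta v_m'$. Second, the weighted bounds of Corollary~\ref{cor:energyum1} must be converted into ordinary $L^2(0,t;\cdot)$ bounds: for $0\le s\le t\le T$ one has $(t-s)^{\alpha-1}\ge T^{\alpha-1}$, so $\int_0^t\|\phi(s)\|^2\,ds\le\Gamma(\alpha)T^{1-\alpha}J^\alpha(\|\phi\|^2)(t)$; applying this with $\phi=\partial_s^{1-\alpha}\nabla v_m$, together with $\int_0^t\|\partial_s^{1-\alpha}v_m\|^2\,ds\le T\max_s\|\partial_s^{1-\alpha}v_m\|^2$ and Corollary~\ref{cor:energyum1}, gives
\[
\int_0^t\|\partial_s^{1-\alpha}v_m(s)\|_{H^1(\Omega)}^2\,ds
  \le\bigl(C_8T+C_9\Gamma(\alpha)T^{1-\alpha}\bigr)
  \bigl[\|u_0\|_{H^2(\Omega)}^2+\|g\|_{L^2}^2\bigr].
\]
Since Assumption~\ref{ass:12} gives $\alpha>1/2$, the function $\omega_\alpha$ lies in $L^2(0,T)$ with $\int_0^T\omega_\alpha(s)^2\,ds=T^{2\alpha-1}/\bigl((2\alpha-1)\Gamma(\alpha)^2\bigr)$, which together with $C_6\le2C_R^2(\kappa_\alpha^2+\|\mathbf{F}\|_{1,\infty}^2)$ explains the constant $C_{11}$. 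Setting
\[
\mathcal{R}_m(s):=\|g(s)\|+\|\mathbf{F}\|_{1,\infty}\,\|\partial_s^{1-\alpha}v_m(s)\|_{H^1(\Omega)}
  +\sqrt{C_6}\,\|u_0\|_{H^2(\Omega)}\,\omega_\alpha(s),
\]
the three facts above and the elementary inequality $3(a^2+b^2+c^2)\ge(a+b+c)^2$ yield $\int_0^t\mathcal{R}_m(s)^2\,ds\le(C_{10}+C_{11})\|u_0\|_{H^2(\Omega)}^2+C_{10}\|g\|_{L^2}^2$, and each of \eqref{M3}, \eqref{M6}, \eqref{M7} will reduce to this bound on $\mathcal R_m$.

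For \eqref{M3}, take the inner product of \eqref{dmka3} with $-\Delta v_m(t)\in W_m$ and integrate by parts in~$x$; since the test function lies in $W_m$, the projector $\Pi_m$ drops out. Using the vector identity of Lemma~\ref{lem:Fphi} for the $\mathbf{F}$-term and Lemma~\ref{lem:u0m} for the $u_{0m}$-term, the whole right-hand side is bounded by $\mathcal{R}_m(t)\,\|\Delta v_m(t)\|$, while the first term equals $\langle\nabla v_m'(t),\nabla v_m(t)\rangle$. Integrating over $[0,t]$, the first term contributes $\tfrac12\|\nabla v_m(t)\|^2$ (again because $v_m(0)=0$) and, applying Lemma~\ref{lem:rho} to $\Delta v_m$, the principal term contributes at least $\kappa_\alpha\rho_\alpha t^{\alpha-1}\int_0^t\|\Delta v_m\|^2\,ds$. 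On the right, Cauchy--Schwarz in time followed by the arithmetic--geometric inequality with weight $\kappa_\alpha\rho_\alpha t^{\alpha-1}$ absorbs half of $\int_0^t\|\Delta v_m\|^2\,ds$ into the left and leaves $\tfrac12(\kappa_\alpha\rho_\alpha)^{-1}t^{1-\alpha}\int_0^t\mathcal{R}_m(s)^2\,ds$; multiplying by $2$ and inserting the bound on $\int_0^t\mathcal R_m^2$ gives exactly \eqref{M3}.

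For \eqref{M6}, test \eqref{dmka3} with $-\partial_t^{1-\alpha}\Delta v_m(t)\in W_m$; after dropping $\Pi_m$ and integrating by parts in~$x$, the principal term becomes $\kappa_\alpha\|\partial_t^{1-\alpha}\Delta v_m(t)\|^2$, the right-hand side is bounded by $\mathcal{R}_m(t)\,\|\partial_t^{1-\alpha}\Delta v_m(t)\|$, and the first term equals $\langle\nabla v_m'(t),J^\alpha\nabla v_m'(t)\rangle$, which is nonnegative after integration in~$t$ by Lemma~\ref{lem:pd}. Integrating over $[0,t]$, then applying Cauchy--Schwarz in time and the arithmetic--geometric inequality with weight $\kappa_\alpha$ to absorb half of $\kappa_\alpha\int_0^t\|\partial_s^{1-\alpha}\Delta v_m\|^2\,ds$, we obtain $\int_0^t\|\partial_s^{1-\alpha}\Delta v_m\|^2\,ds\le\kappa_\alpha^{-2}\int_0^t\mathcal{R}_m(s)^2\,ds$, which is \eqref{M6}. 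Finally, for \eqref{M7}, test \eqref{dmka3} with $v_m'(t)\in W_m$: after integration by parts in~$x$ the Laplacian term contributes $\kappa_\alpha\langle J^\alpha\nabla v_m'(t),\nabla v_m'(t)\rangle$, again nonnegative after integration by Lemma~\ref{lem:pd}, while the remaining three right-hand terms are each bounded by $\tfrac16\|v_m'(t)\|^2$ plus a multiple of the corresponding squared factor; absorbing the three $\|v_m'\|^2$ contributions, integrating over $[0,t]$, and using $3(a^2+b^2+c^2)\ge(a+b+c)^2$ once more gives $\int_0^t\|v_m'\|^2\,ds\le\int_0^t\mathcal{R}_m(s)^2\,ds\le(C_{10}+C_{11})\|u_0\|_{H^2(\Omega)}^2+C_{10}\|g\|_{L^2}^2$, which implies \eqref{M7} since $C_R\ge1$.

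The integrations by parts and the Cauchy--Schwarz / arithmetic--geometric manipulations are routine; the genuinely delicate points, and where I expect the bookkeeping to be heaviest, are (i) that the three principal terms are coercive or nonnegative only \emph{after} integration in time, and only because $v_m(0)=0$ makes the Riemann--Liouville and Caputo fractional derivatives of $v_m$ coincide, so that Lemmas \ref{lem:pd}, \ref{lem:pd2} and~\ref{lem:rho} apply and the $t^{\alpha-1}$ weight produced by Lemma~\ref{lem:rho} must be matched against the $t^{1-\alpha}$ in \eqref{M3}; and (ii) the passage from the weighted $L^2_\alpha$-estimates of Corollary~\ref{cor:energyum1} to the plain $L^2(0,t;L^2(\Omega))$-estimates collected in $\mathcal R_m$, where Assumption~\ref{ass:12} ($\alpha>1/2$) is indispensable, since it is precisely what makes $\omega_\alpha$ square-integrable on $(0,T)$. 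Tracking the constants $C_6,\dots,C_{11}$ through these steps then produces the stated inequalities.
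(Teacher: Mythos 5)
Your proposal is correct and follows essentially the same route as the paper: the same three test functions $-\Delta v_m$, $v_m'$ and $-\partial_t^{1-\alpha}\Delta v_m$, coercivity of the principal parts via Lemmas~\ref{lem:rho} and~\ref{lem:pd} after integration in time (using $v_m(0)=0$), control of the forcing and initial-data terms through Lemmas~\ref{lem:Fphi} and~\ref{lem:u0m}, and the conversion of the weighted bounds of Corollary~\ref{cor:energyum1} into plain $L^2(0,t)$ bounds exactly as in the paper's inequality~\eqref{M5}. The only difference is cosmetic bookkeeping (your single quantity $\mathcal R_m$ versus the paper's $3\epsilon$-splitting), and the constants come out the same.
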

\begin{proof}
Take the inner product of both sides of~\eqref{dmka3} with~$-\Delta v_m\in W_m$ and integrate by parts with respect to $x$ to get 
\begin{multline*}
\frac12\frac{d}{dt} \|\nabla v_m\|^2
	+\kappa_\alpha\langle\partial_t^{1-\alpha}\Delta v_m,\Delta v_m\rangle  
	=\langle\nabla\cdot(\textbf{F}(t)\partial_t^{1-\alpha}v_m), 
		\Delta v_m\rangle
	-\langle g(t),\Delta v_m\rangle \\
	-\bigl\langle\kappa_\alpha\Delta u_{0m}
	-\nabla\cdot(\textbf{F}(t)u_{0m}),
        \Delta v_m\bigr\rangle\,\omega_\alpha(t). 
\end{multline*}
Integrating in time and noting that, by Lemma~\ref{lem:rho},
\[
\rho_\alpha t^{\alpha-1}\int_0^t\|\Delta v_m\|^2\,ds
\leq\int_0^t \langle\partial_s^{1-\alpha}\Delta v_m,\Delta v_m\rangle\,ds,
\]
we obtain
\begin{multline*}
\frac12 \|\nabla v_m(t)\|^2
	+\kappa_\alpha\rho_\alpha t^{\alpha-1}\int_0^t\|\Delta v_m\|^2\,ds
	\leq 3\epsilon\int_0^t\|\Delta v_m\|^2\,ds\\
+\frac{1}{4\epsilon}\int_0^t\Bigl[
	\bigl\|\nabla\cdot\bigl(\textbf{F}(s)\partial_s^{1-\alpha}v_m\bigr)\bigr\|^2
	+\|g(s)\|^2
	+\bigl\|\kappa_\alpha\Delta u_{0m}
		-\nabla\cdot\bigl(\textbf{F}(s)u_{0m}\bigr)\bigr\|^2\omega_\alpha(s)^2
	\Bigr]\,ds,
\end{multline*}
with a free parameter~$\epsilon>0$. Choosing 
$\epsilon=\kappa_\alpha\rho_\alpha t^{\alpha-1}/6$ and recalling 
Lemma~\ref{lem:Fphi} yields
\begin{align*}
\|\nabla v_m(t)\|^2&+\kappa_\alpha\rho_\alpha t^{\alpha-1}
	\int_0^t\|\Delta v_m\|^2\,ds
    \le\frac{3}{\kappa_\alpha\rho_\alpha t^{\alpha-1}}\int_0^t
	\Bigl(\|g(s)\|^2
        +2\kappa_\alpha^2\|\Delta u_{0m}\|^2\omega_\alpha(s)^2\Bigr)\,ds\\
&\qquad{}+\frac{3\|\textbf{F}\|_{1,\infty}^2}%
{\kappa_\alpha\rho_\alpha t^{\alpha-1}}
	\int_0^t \Bigl(\|\partial_s^{1-\alpha}v_m\|_{H^1(\Omega)}^2
        +2\|u_{0m}\|_{H^1(\Omega)}^2\omega_\alpha(s)^2\Bigr)\,ds\\
&\le\frac{t^{1-\alpha}}{\kappa_\alpha\rho_\alpha}\biggl(
	C_{11}\|u_0\|_{H^2(\Omega)}^2+3\|g\|_{L^2}^2
	+3\|\mathbf{F}\|_{1,\infty}^2
	\int_0^t\|\partial_s^{1-\alpha}v_m\|_{H^1(\Omega)}^2\,ds\biggr),
\end{align*}
by Lemma~\ref{lem:u0m}.
Invoking Corollary~\ref{cor:energyum1}, we have
\begin{align}\label{M5}
\int_0^t\|\partial_s^{1-\alpha}v_m\|_{H^1(\Omega)}^2
    &\le\int_0^t\biggl(\|\partial_s^{1-\alpha}v_m\|^2
        +\frac{\omega_\alpha(t-s)}{\omega_\alpha(t)}\,
            \|\partial_s^{1-\alpha}\nabla v_m\|^2\biggr)\,ds\nonumber\\
    &\le\Bigl(
        C_8\,t+C_9\Gamma(\alpha)t^{1-\alpha}\Bigr)
	\bigl[\|u_0\|^2_{H^2(\Omega)}+\|g\|^2_{L^2}\bigr],
\end{align}
and the bound~\eqref{M3} follows.

In a similar fashion, we next take the inner product of both sides 
of~\eqref{dmka3} with~$v'_m\in W_n$ and integrate by parts with respect to~$x$ 
to obtain
\begin{align*}
\|v'_m(t)\|^2+\kappa_\alpha\langle J^{\alpha}\nabla v'_m ,\nabla v'_m\rangle
	&=-\bigl\langle
    \nabla\cdot\bigl(\textbf{F}(t)\partial_t^{1-\alpha}v_m\bigr), v'_m
    \bigr\rangle\\
	&\qquad+\langle g(t),v'_m\rangle+\bigl\langle
	\nabla\cdot\bigl(\textbf{F}(t)u_{0m}\bigr)-\kappa_\alpha \Delta u_m^0,v'_m
    \bigr\rangle\,\omega_\alpha(t)\\
	&\leq3\epsilon\|v'_m(t)\|^2
	+\frac{1}{4\epsilon}\Bigl(
	\|g(t)\|^2
+\bigl\|\nabla\cdot\bigl(\textbf{F}(t)\partial_t^{1-\alpha}v_m\bigr)\bigr\|^2\\
&\qquad{}+\bigl\|\nabla\cdot\bigl(\textbf{F}(t)u_{0m}\bigr)
        -\kappa_\alpha\Delta u_{0m}\bigr\|^2\omega_\alpha^{2}(t)\Bigr). 
\end{align*}
Choosing $\epsilon=1/6$ and invoking Lemma~\ref{lem:Fphi} gives
\begin{align*}
\|v_m'\|^2+2\kappa_\alpha\langle J^\alpha\nabla v_m',\nabla v_m'\rangle
    &\le3\|g(t)\|^2
    +3\|\textbf{F}\|_{1,\infty}^2\|\partial_t^{1-\alpha}v_m\|_{H^1(\Omega)}^2\\
    &\qquad{}+6\bigl(\kappa_\alpha^2+\|\textbf{F}\|_{1,\infty}^2\bigr)
        \|u_{0m}\|_{H^2(\Omega)}^2\omega_\alpha^{2}(t).
\end{align*}
Integrating both sides of the inequality in time and invoking 
Lemma~\ref{lem:pd}, we deduce that
\begin{equation}\label{M4}
\int_0^t \|v'_m\|^2\, ds
    \le3 \|g\|_{L^2(0,T;L^2)}^2
	+3\|\textbf{F}\|_{1,\infty}^2 
	\|\partial_t^{1-\alpha}v_m\|_{L^2(0,t;H^1(\Omega))}^2
	+C_{11}\|u_{0m}\|^2_{H^2(\Omega)}.
\end{equation}
The second result \eqref{M7} now follows from~\eqref{M5}, \eqref{M4} and Lemma~\ref{lem:u0m}.

Using similar arguments, take the inner product of both sides 
of~\eqref{dmka3} with~$-\partial_t^{1-\alpha}\Delta v_m\in W_m$, integrate by parts with respect to~$x$ and note that 
$\partial_t^{1-\alpha}\Delta v_m=J^\alpha\Delta v'_m$
to obtain
\begin{align*}
\langle \nabla v'_m,J^\alpha\nabla v'_m\rangle 
	+\kappa_\alpha\|\partial_t^{1-\alpha}\Delta v_m\|^2 
	&=\langle\nabla\cdot(\textbf{F}(t)\partial_t^{1-\alpha}v_m), 
		\partial_t^{1-\alpha}\Delta v_m\rangle
	-\langle g(t),\partial_t^{1-\alpha}\Delta v_m\rangle \\
	&\quad-\bigl\langle\kappa_\alpha\Delta u_{0m}
	-\nabla\cdot(\textbf{F}(t)u_{0m}),
        \partial_t^{1-\alpha}\Delta v_m\bigr\rangle\,\omega_\alpha(t)\\
    &\leq \frac{\kappa_\alpha}{2}\|\partial_t^{1-\alpha}\Delta v_m\|^2
+\frac{3}{2\kappa_\alpha}\Big(
	\bigl\|\nabla\cdot\bigl(\textbf{F}(t)\partial_t^{1-\alpha}v_m\bigr)\bigr\|^2
	+\|g(t)\|^2\\
&\quad	+\bigl\|\kappa_\alpha\Delta u_{0m}
		-\nabla\cdot\bigl(\textbf{F}u_{0m}\bigr)\bigr\|^2\omega_\alpha^2(t)
		\Big).
\end{align*}
Now integrate in time, invoking 
Lemma~\ref{lem:pd} and using~\eqref{M5}, to deduce that
\begin{align*}
\int_0^t \|\partial_t^{1-\alpha}\Delta v_m(s)\|^2 \, ds
&\leq 
\frac{3}{\kappa_\alpha^2}\int_0^t\Big(
	\bigl\|\nabla\cdot\bigl(\textbf{F}(s)\partial_s^{1-\alpha}v_m\bigr)\bigr\|^2
	+\|g(s)\|^2\\
&\qquad	+\bigl\|\kappa_\alpha\Delta u_{0m}
		-\nabla\cdot\bigl(\textbf{F}u_{0m}\bigr)\bigr\|^2\omega_\alpha^2(s)
		\Big)\,ds\\
&\leq \frac{3}{\kappa_\alpha^2}\int_0^t\Bigl(\|g(s)\|^2
        +2\kappa_\alpha^2\|\Delta u_{0m}\|^2\omega_\alpha(s)^2\Bigr)\,ds\\
    &\qquad +\frac{3\|\textbf{F}\|_{1,\infty}^2}{\kappa_\alpha^2}\int_0^t
    \Bigl(\|\partial_s^{1-\alpha}v_m\|_{H^1(\Omega)}^2
        +2\|u_{0m}\|_{H^1(\Omega)}^2\omega_\alpha(s)^2\Bigr)\,ds,
\end{align*}
and \eqref{M6} now follows by~\eqref{M5} and Lemma~\ref{lem:u0m}, which completes the proof of the 
lemma.
\end{proof}

Inequality~\eqref{M5} may also be derived (with a different constant factor) by applying~\eqref{CR} to~\eqref{M6}.

We remark that the function~$\alpha\mapsto\rho_\alpha$ is monotone increasing
for~$\alpha\in(0,1)$, with $\rho_\alpha\to1$ as~$\alpha\to1$.  
Thus, $\rho_{1/2}<\rho_\alpha<1$ for~$1/2<\alpha<1$, with
$\rho_{1/2}=\sqrt{2\pi/27}=0.48240\ldots$.

\subsection{The classical solution}\label{sec:euweak}
In this section, by using the method of compactness, we show that there is a subsequence of $\{v_m\}_m$ such that the sum of its limit and the inital data 
satisfies equation~\eqref{prob} almost everywhere.
\begin{theorem}\label{the:exit}
Assume that $\alpha\in(1/2,1)$, $u_0\in H^2(\Omega) \cap H_0^1(\Omega)$,  $\textbf{F} \in W^{1,\infty}((0,T)\times\Omega)$ and $g\in L^2(0,T;L^2)$.
Then there exists a unique classical solution of~\eqref{prob}, in the sense of Definition~\ref{def:class}, such that
\begin{equation}\label{eq:ubound}
\sup_{0\leq t\leq T} \|u(t)\|_{H^1(\Omega)}^2
+ \|u'\|_{L^2}^2
+\|\partial_t^{1-\alpha} u\|_{L^2(0,T;H^2)}^2
\leq C_{12}\bigl[\|u_0\|^2_{H^2(\Omega)}+\|g\|^2_{L^2}\bigr],
\end{equation}
where 
\[
C_{12}:= \frac{C_8 T^{2-\alpha}}{\Gamma(2-\alpha)} 
	+ (C_{10}+C_{11})\biggl(1+ C_{\mathrm{R}}^2 + \frac{T^{1-\alpha}}{\kappa_\alpha\rho_\alpha}
	+\frac{1}{\kappa_\alpha^2}\biggr).
\]
\end{theorem}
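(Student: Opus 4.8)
The plan is to follow the Galerkin-plus-compactness strategy already used for Theorem~\ref{the:exit0}, but now extracting limits from the shifted sequence $\{v_m\}$ of Lemma~\ref{thm:energyum1}. First I would collect the \emph{a priori} bounds of Section~\ref{sec:classical}: since $\|G_m\|\le\|G\|$ and $\|u_{0m}\|_{H^2(\Omega)}\le C_{\mathrm R}\|u_0\|_{H^2(\Omega)}$ by Lemma~\ref{lem:u0m}, the estimates~\eqref{M3}, \eqref{M7}~and \eqref{M6} show that $\{v_m\}$ is bounded in $L^\infty(0,T;H_0^1(\Omega))$, that $\{v_m'\}$ is bounded in $L^2(0,T;L^2(\Omega))$, that $\{\Delta v_m\}$ is bounded in $L^2(0,T;L^2(\Omega))$ (so by~\eqref{CR} also $\{v_m\}$ is bounded in $L^2(0,T;H^2(\Omega))$), and that $\{\partial_t^{1-\alpha}\Delta v_m\}$ is bounded in $L^2(0,T;L^2(\Omega))$ (so again by~\eqref{CR} $\{\partial_t^{1-\alpha}v_m\}$ is bounded in $L^2(0,T;H^2(\Omega))$, since $\partial_t^{1-\alpha}v_m(t)\in W_m\subset H_0^1(\Omega)$).

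Next I would apply the Aubin--Lions--Simon lemma (Lemma~\ref{lem: Aubin}) with $B_0=H_0^1(\Omega)$, $B_1=B_2=L^2(\Omega)$, $p=+\infty$~and $r=2$ to obtain a subsequence (not relabelled) and $v\in C([0,T];L^2(\Omega))$ with $v_m\to v$ strongly in $C([0,T];L^2(\Omega))$; by passing to further subsequences and using weak/weak-$\star$ compactness we also get $v_m\to v$ weak-$\star$ in $L^\infty(0,T;H_0^1(\Omega))$, $v_m\to v$ weakly in $L^2(0,T;H^2(\Omega))$, and $v_m'\to v'$ weakly in $L^2(0,T;L^2(\Omega))$. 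Since $v_m(0)=0$ for every $m$ we get $v(0)=0$, hence $v=J^1v'$ and $\partial_t^{1-\alpha}v=J^\alpha v'$; as $J^\alpha$ is bounded on $L^2(0,T;L^2(\Omega))$, $\partial_t^{1-\alpha}v_m=J^\alpha v_m'\to J^\alpha v'=\partial_t^{1-\alpha}v$ weakly there, and comparison with the weak limit of the bounded sequence $\{\partial_t^{1-\alpha}v_m\}$ in $L^2(0,T;H^2(\Omega))$ identifies that limit, so in particular $\partial_t^{1-\alpha}v\in L^2(0,T;H^2(\Omega))$.

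Then I would pass to the limit in~\eqref{dmka3} tested against $\xi\in C_c^\infty((0,T)\times\Omega)$. Using that $u_{0m}=\Pi_m u_0\to u_0$ in $H_0^1(\Omega)$ (the $w_k$ are orthogonal in both $L^2(\Omega)$~and $H_0^1(\Omega)$) and $\Delta u_{0m}=\Pi_m(\Delta u_0)\to\Delta u_0$ in $L^2(\Omega)$, together with Lemma~\ref{lem:Fphi} and the boundedness of the sequences above — which makes every occurrence of $\Pi_m\xi$ interchangeable with $\xi$ in the limit, since $\|\Pi_m\xi-\xi\|\to0$ — each term converges, and with $u:=v+u_0$ the limit identity reduces (the $\omega_\alpha(t)\Delta u_{0m}$~and $\omega_\alpha(t)\nabla\cdot(\textbf{F}u_{0m})$ contributions cancelling) to~\eqref{proba} in $L^2((0,T)\times\Omega)$, hence a.e.  The regularity demanded by Definition~\ref{def:class} then follows from the two steps above together with $\partial_t^{1-\alpha}u=\partial_t^{1-\alpha}v+\omega_\alpha(t)u_0$; here Assumption~\ref{ass:12} enters precisely because $\omega_\alpha\in L^2(0,T)$ only when $\alpha>1/2$, which is what puts $\partial_t^{1-\alpha}u$ in $L^2(0,T;H^2(\Omega))$.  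That $u(0)=u_0$ is immediate from $v(0)=0$, and~\eqref{eq:ubound} follows from weak/weak-$\star$ lower semicontinuity applied to the bounds of Corollaries~\ref{cor:energyum1}--\ref{cor:umL2} and Lemma~\ref{thm:energyum2}, tracking the constants.  I expect this passage to the limit to be the main obstacle: because $\textbf{F}$ is only $W^{1,\infty}$ we cannot integrate by parts twice in time as was done for Theorem~\ref{the:exit0}, and must instead rely on the $L^2(0,T;H^2(\Omega))$ bound for $\partial_t^{1-\alpha}v_m$ and on correctly identifying the weak limits of $\partial_t^{1-\alpha}v_m$ and $\partial_t^{1-\alpha}\Delta v_m$ with the fractional derivatives of $v$; controlling the singular prefactor $\omega_\alpha(t)$ in front of the projected initial-data terms, and its interplay with $\Pi_m$, is the delicate point.

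Finally, for uniqueness, linearity reduces the claim to showing that a classical solution $u$ with $u_0=0$ and $g=0$ vanishes; note that one cannot simply quote the uniqueness part of Theorem~\ref{the:exit0}, which required $\textbf{F}\in W^{2,\infty}$.  I would take the $L^2(\Omega)$ inner product of~\eqref{proba} with $\partial_t^{1-\alpha}u(t)$, integrate by parts in $x$, use $u(0)=0$ so that $\partial_t^{1-\alpha}u=J^\alpha u'$, apply $J^\alpha$ and invoke Lemma~\ref{lem:pd} (as in the proof of Lemma~\ref{thm:energyum1}); estimating $|\langle\textbf{F}(t)\partial_t^{1-\alpha}u,\partial_t^{1-\alpha}\nabla u\rangle|$ by the arithmetic--geometric inequality and absorbing the gradient term yields $\|\partial_t^{1-\alpha}u(t)\|^2\le\kappa_\alpha^{-1}\|\textbf{F}\|_\infty^2\,J^\alpha(\|\partial_t^{1-\alpha}u\|^2)(t)$, whence the fractional Gronwall inequality (Lemma~\ref{lem:Gronwall}, with $a\equiv0$) forces $\partial_t^{1-\alpha}u\equiv0$ and therefore $u=J^1u'=J^{1-\alpha}(J^\alpha u')=J^{1-\alpha}\partial_t^{1-\alpha}u\equiv0$.
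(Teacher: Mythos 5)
Your proposal is correct and, for the existence part, follows essentially the same route as the paper: the \emph{a priori} bounds of Corollaries~\ref{cor:energyum1}--\ref{cor:umL2} and Lemma~\ref{thm:energyum2}, the Aubin--Lions--Simon lemma applied to $\{v_m\}$, identification of the weak limits of $\partial_t^{1-\alpha}v_m=J^\alpha v_m'$ and $\partial_t^{1-\alpha}\Delta v_m$, passage to the limit in~\eqref{dmka3}, and weak lower semicontinuity for~\eqref{eq:ubound}; your observation that the $\omega_\alpha(t)$ initial-data terms cancel when one substitutes $u=v+u_0$ is exactly what makes the limit identity reduce to~\eqref{proba}, and you correctly note that no double integration by parts in time is needed here because $\{\partial_t^{1-\alpha}v_m\}$ is already bounded in $L^2(0,T;H^2(\Omega))$ by~\eqref{M6}. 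The one place where you genuinely diverge is uniqueness: the paper simply asserts that uniqueness ``follows from~\eqref{eq:ubound}'', which strictly speaking only bounds the solution constructed by the Galerkin limit, whereas you give a self-contained argument for an \emph{arbitrary} classical solution with zero data --- testing against $\partial_t^{1-\alpha}u$, invoking Lemma~\ref{lem:pd} (which is where $\alpha>1/2$ enters) and the fractional Gronwall inequality of Lemma~\ref{lem:Gronwall} with $a\equiv0$. This is a more careful treatment than the paper's one-line claim, and your remark that one cannot simply quote the uniqueness of Theorem~\ref{the:exit0} (which assumed $\textbf{F}\in W^{2,\infty}$) is well taken; the only small point to tidy is that the integration by parts in $x$ in that energy identity needs $\partial_t^{1-\alpha}u(t)\in H^1_0(\Omega)$, which follows from $u(t)\in H^1_0(\Omega)$ but is not stated explicitly in Definition~\ref{def:class}.
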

\begin{proof}
From Corollary~\ref{cor:umL2} and Lemma~\ref{thm:energyum2} we obtain
\begin{equation}\label{eq:vm bounds}
\sup_{0\leq t\leq T} \|v_m(t)\|_{H^1(\Omega)}^2
+\|v_m'\|_{L^2}^2+\|\partial_t^{1-\alpha}v_m\|_{L^2(0,T;H^2)}^2
	\le C_{12}\bigl[\|u_0\|^2_{H^2(\Omega)}+\|g\|^2_{L^2}\bigr],
\end{equation}
which shows that the sequence $\{v_m\}_{m=1}^\infty$ is  
bounded in $L^\infty(0,T;H^1)\cap L^2(0,T;H^2\cap H^1_0)$ and that
the sequence $\{v'_m\}_{m=1}^\infty$ is  bounded in
$L^2(0,T;L^2)$. 
Since the embeddings 
$
H^2 \hookrightarrow H^1 \hookrightarrow L^2
$
are compact, it follows from Lemma~\ref{lem: Aubin} that there exists a subsequence of  
$\{v_m\}_{m=1}^\infty$ (still denoted by  $\{v_m\}_{m=1}^\infty$) such that 
\begin{equation}\label{con:1}
v_m\rightarrow v
\text{ strongly in } C([0,T];L^2)\cap L^2(0,T;H^1).
\end{equation}
Furthermore, from the upper bounds of $\{v_m\}_{m=1}^\infty$ we have
\begin{align}\label{con:3}
&v_m\rightarrow v
\text{ weakly in }L^\infty(0,T;H^1)\cap L^2(0,T;H^2\cap H^1_0)\nonumber\\
\text{and }
&
v'_m\rightarrow v'
\text{ weakly in } L^2(0,T;L^2).
\end{align}
By virtue of Lemma~\ref{lem:L1}, the strong convergence in~\eqref{con:1} implies that 
\begin{equation*}
J^\alpha v_m\rightarrow J^\alpha v
\text{ strongly in } C([0,T];L^2)\cap L^2(0,T;H^1).
\end{equation*}
This, together with Corollary~\ref{cor:energyum1} and~\eqref{M6}, yields
\begin{equation}\label{con:4}
\partial_t J^\alpha v_m\rightarrow \partial_t J^\alpha v
\text{ weakly in } L^\infty(0,T;L^2)\cap L^2(0,T;H^2).
\end{equation}

Multiplying both sides of~\eqref{dmka3} by a test function $\xi\in L^2(0,T;L^2)$, integrating over $(0,T)\times\Omega$
and noting that $\Pi_m$ is a self-adjoint operator on $L^2(\Omega)$, we deduce that
\begin{align*}
\langle v'_m,\xi \rangle_{L^2(0,T;L^2)}
&-\kappa_\alpha\langle\partial_t^{1-\alpha}\Delta v_m ,\xi \rangle_{L^2(0,T;L^2)}
	+\langle\nabla\cdot(\textbf{F}\partial_t^{1-\alpha}v_m),\Pi_m\xi \rangle_{L^2(0,T;L^2)}\nonumber\\
	&= \langle g,\Pi_m\xi \rangle_{L^2(0,T;L^2)}
	-\langle\omega_\alpha\bigl[ 
    \nabla\cdot(\textbf{F}u_{0m})
	-\kappa_\alpha\Delta u_{0m}\bigr],\Pi_m\xi \rangle_{L^2(0,T;L^2)}.
\end{align*}
Now let $m\to\infty$ in this equation and  recall~\eqref{con:3} and \eqref{con:4}. We get
\begin{align}\label{eq:lim1}
\langle u',\xi \rangle_{L^2(0,T;L^2)}
-\kappa_\alpha\langle\partial_t^{1-\alpha}\Delta u &,\xi \rangle_{L^2(0,T;L^2)}
	+\langle\nabla\cdot(\textbf{F}\partial_t^{1-\alpha}u),\xi \rangle_{L^2(0,T;L^2)}
	= \langle g,\xi \rangle_{L^2(0,T;L^2)}
\end{align}
for all $ \xi\in L^2(0,T;L^2)$, where $u:= v + u_0$.
From~\eqref{con:1}--\eqref{con:4}, we have 
\[
u'\in L^2(0,T;L^2)\quad \text{and}\quad 
\partial_t^{1-\alpha} u\in L^2(0,T;H^2).
\]
Hence, it follows from~\eqref{eq:lim1} that $u$ satisfies~\eqref{prob} a.e. in $(0,T)\times \Omega$.

Taking the limit as $m\to\infty$ in~\eqref{eq:vm bounds}, we 
obtain~\eqref{eq:ubound}. The uniqueness of the solution~$u$ follows 
from~\eqref{eq:ubound}, which completes the proof of the theorem.
\end{proof}
\begin{remark}
It follows from the uniqueness in Theorems~\ref{the:exit0} and~\ref{the:exit} 
that the mild solution will become the classical solution when 
$\alpha\in(1/2,1)$ and $u_0\in H^2(\Omega)\cap H^1_0(\Omega)$. Furthermore, the 
continuous dependence of both the mild  and classical solutions on the initial 
data $u_0$ follows from~\eqref{eq:ubound0} and~\eqref{eq:ubound}. 
\end{remark}

\section{Regularity of the classical solution}\label{sec:regclassical}
Recall that $1/2 < \alpha <1$ 
and that in general $C = C(\Omega, \kappa_\alpha, \textbf{F}, T)$. From Lemma~\ref{lem:f} onwards, we allow $C = C(\Omega, \kappa_\alpha, \textbf{F}, T, q)$, where $q$ appears in the statements of our results below.

From Theorem~\ref{the:exit}, for almost every 
$(t,x)\in (0,T)\times\Omega$ the solution $u(t,x)$ satisfies~\eqref{prob}. 
Using the identity $\partial_t^{1-\alpha}u=(J^\alpha u)'(t)
=(J^\alpha u')(t)+u(0)\omega_\alpha(t)$, we rewrite~\eqref{prob} as
\begin{equation}\label{eq:prob2}
u'-\nabla\cdot(\kappa_\alpha\nabla J^\alpha u'-\textbf{F}J^{\alpha}u')
	=g(t)+ \nabla\cdot\bigl[\kappa_\alpha\nabla u_{0}-\textbf{F}(t)u_{0}\bigr] 
	\omega_\alpha(t).
\end{equation}
From this equation and the fact that $J^\alpha\phi(0) = 0$ for any 
function $\phi\in L^2(0,T;L^2)$, we deduce that $u'(t)=O(t^{\alpha-1})$ 
when $t$ is close to $0$. By letting $z(t,x):= tu'(t,x)$,  we have $z(0) = 0$. 
The regularity of $z$ is examined in the following lemma.

\begin{lemma} \label{lem:reg z}
Assume that $\int_0^T\|tg'(t)\|^2\,dt$ is finite. Then, the function $z$ 
defined above satisfies
\begin{align}\label{eq:zbound}
\sup_{0\leq t\leq T} \|z(t)\|_{H^1(\Omega)}^2
+\|z'\|_{L^2}^2
+\|\partial_t^{1-\alpha} z\|_{L^2(0,T;H^2)}^2
\leq C_{13}\bigl(\|u_0\|_{H^2(\Omega)}^2+\|g\|_{L^2}^2
	+\|g_1\|_{L^2}^2\bigr) 
\end{align}
for some constant $C_{13}$. 
\end{lemma}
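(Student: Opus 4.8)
The plan is to derive an equation for $z(t,x) = t u'(t,x)$ by differentiating the reformulated equation~\eqref{eq:prob2} and multiplying through by~$t$, then to run essentially the same Galerkin-plus-\emph{a priori}-estimates machinery that produced Theorem~\ref{the:exit}, applied now to $z$ in place of $u$. First I would differentiate~\eqref{eq:prob2} with respect to $t$; since $\partial_t^{1-\alpha} u = J^\alpha u' + u_0\omega_\alpha(t)$, and $z' = u' + t u''$, one finds after multiplying by $t$ that $z$ satisfies an equation of the schematic form
\[
z' - \nabla\cdot\bigl(\kappa_\alpha\nabla\,\partial_t^{1-\alpha}z - \textbf{F}\,\partial_t^{1-\alpha}z\bigr)
= \tilde g(t) + (\text{lower-order terms in } u', \partial_t^{1-\alpha}u, u_0),
\]
where the right-hand side collects $tg'(t) + g(t)$, terms like $\omega_\alpha(t)$ and $t\omega_\alpha'(t)$ multiplying $\nabla\cdot(\kappa_\alpha\nabla u_0 - \textbf{F}u_0)$ and $\nabla\cdot(\textbf{F}'u_0)$, a commutator term $\nabla\cdot\bigl((t\textbf{F})'\,J^\alpha u'\bigr) - \nabla\cdot\bigl(\textbf{F}\,J^\alpha u'\bigr)$ type contribution arising because $t$ does not commute with $J^\alpha$, and the source $g_1$ built from $tg'(t)$. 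The point is that the principal part acting on $z$ is exactly the Fokker--Planck operator of~\eqref{prob}, and $z(0)=0$, so the structural hypotheses of Theorem~\ref{the:exit} are in force provided the new right-hand side lies in $L^2(0,T;L^2)$.

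The key step is therefore to bound that right-hand side. Here I would invoke the regularity already established: Theorem~\ref{the:exit} gives $u' \in L^2(0,T;L^2)$ and $\partial_t^{1-\alpha}u \in L^2(0,T;H^2)$ with norms controlled by $\|u_0\|_{H^2(\Omega)}^2 + \|g\|_{L^2}^2$, and Corollary~\ref{cor:energyum1} together with Corollary~\ref{cor:umL2} and Lemma~\ref{thm:energyum2} (applied at the Galerkin level and passed to the limit) control $\|\partial_t^{1-\alpha}v_m\|$, $\|\partial_t^{1-\alpha}\nabla v_m\|_{L^2_\alpha}$ and $\|v_m'\|_{L^2}$. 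The terms involving $\omega_\alpha(t)$ and $t\omega_\alpha'(t) = (\alpha-1)\omega_\alpha(t)$ are square-integrable on $(0,T)$ precisely because $\alpha > 1/2$ (this is the heuristic spelled out after Assumption~\ref{ass:12}), so they contribute a factor $\|u_0\|_{H^2(\Omega)}^2$ via Lemma~\ref{lem:u0m} and Lemma~\ref{lem:Fphi}; the $J^\alpha u'$ commutator terms are handled by Lemma~\ref{lem:L1} (with $\beta = \alpha > 1/2$) and the bound on $\|u'\|_{L^2}$; and the genuinely new data term is $g_1 := t g'(t) + g(t)$, which lies in $L^2(0,T;L^2)$ by the hypothesis that $\int_0^T \|tg'(t)\|^2\,dt < \infty$ and the fact that $g \in L^2$. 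Collecting these gives a bound of the form $\|\text{RHS}\|_{L^2}^2 \le C\bigl(\|u_0\|_{H^2(\Omega)}^2 + \|g\|_{L^2}^2 + \|g_1\|_{L^2}^2\bigr)$.

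With the right-hand side so controlled, the conclusion~\eqref{eq:zbound} follows by repeating verbatim the Galerkin argument of Sections~\ref{sec:Galerkin}--\ref{sec:classical}: set up the finite-dimensional problem for $z_m$ (or equivalently differentiate the equation for $v_m$ and multiply by $t$), derive the analogues of Lemma~\ref{thm:energyum1}, Corollary~\ref{cor:energyum1}, Corollary~\ref{cor:umL2} and Lemma~\ref{thm:energyum2} with the new forcing, and pass to the limit using Lemma~\ref{lem: Aubin} exactly as in the proof of Theorem~\ref{the:exit}, finally reading off $C_{13}$ from the accumulated constants. The main obstacle I anticipate is purely bookkeeping rather than conceptual: correctly identifying all the terms produced by differentiating~\eqref{eq:prob2} and multiplying by $t$ — in particular keeping track of how $t$ interacts with the Riemann--Liouville operators (so that $t\,\partial_t^{1-\alpha}u'$ is rewritten in terms of $\partial_t^{1-\alpha}z$ plus a controllable remainder) — and verifying that every resulting term is genuinely $L^2(0,T;L^2)$ with the right dependence on the data, using the sharpness of $\alpha > 1/2$ for the $\omega_\alpha$ factors. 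Once the equation for $z$ is written down cleanly, no new ideas beyond those already deployed for Theorem~\ref{the:exit} are needed.
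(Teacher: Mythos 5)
Your proposal is correct and follows essentially the paper's route: the paper likewise produces an equation \eqref{eq:prob3} for $z$ with the same principal operator and $z(0)=0$, bounds the new source $\bar G$ in $L^2(0,T;L^2(\Omega))$ via Lemma~\ref{lem:Fphi}, Lemma~\ref{lem: priori weak1} and Theorem~\ref{the:exit}, and then concludes by applying Theorem~\ref{the:exit} directly to that equation rather than re-running the Galerkin machinery. The one refinement to note is the order of operations: the paper multiplies the \emph{integrated} equation by $t$ and uses the commutation identity \eqref{eq:inter}, $t(J^\alpha u')(t)=(J^\alpha z)(t)+\alpha(J^{\alpha}u)(t)-u_0t\omega_\alpha(t)$, \emph{before} differentiating, so the $u_0\omega_\alpha$ terms cancel exactly and the ill-defined quantities $u''$ and $\partial_t^{1-\alpha}u'$ that your ``differentiate first, then multiply by $t$'' order would generate never appear.
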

\begin{proof}
For any $t>0$, multiplying both sides of~\eqref{eq:prob2} by $t$ and using the 
elementary identity
\begin{align}\label{eq:inter}
t(J^\alpha u')(t)&=(J^\alpha z)(t)+\alpha(J^{\alpha+1} u')(t)
	=(J^\alpha z)(t)+\alpha\big((J^{\alpha}u)(t)-u_0\omega_{\alpha+1}(t)\big)\\
	&=(J^\alpha z)(t)+\alpha(J^{\alpha}u)(t)-u_0t\omega_\alpha(t),\nonumber
\end{align}
we obtain a differential equation for~$z$: 
\[
z - \nabla\cdot(J^{\alpha}\kappa_\alpha\nabla z - \textbf{F}J^{\alpha}z)
=tg(t)+\alpha\nabla\cdot\big(\kappa_\alpha\nabla J^\alpha u
	-\mathbf{F}J^\alpha u\big).
\]
Differentiating both sides of this equation with respect to $t$ and noting 
that $J^{\alpha}z' = \partial_t^{1-\alpha}z$, we have
\begin{equation}\label{eq:prob3}
z' - \nabla\cdot(\partial_t^{1-\alpha}\kappa_\alpha\nabla z 
	- \textbf{F}\partial_t^{1-\alpha} z)
= \bar{G}(t,x),
\end{equation}
where
\[
\bar{G}:=g+tg'+\alpha\nabla\cdot\bigl(
	\kappa_\alpha\nabla\partial_t^{1-\alpha}u
	-\mathbf{F}'(t)J^\alpha u-\mathbf{F}(t)\partial_t^{1-\alpha}u\bigr).
\]
Applying Lemma~\ref{lem:Fphi} and letting $g_1(t)=tg'(t)$, we find that
\[
\|\bar{G}\|_{L^2}^2\le 4\Bigl(\|g\|_{L^2}^2+\|g_1\|_{L^2}^2
	+\alpha^2(\kappa_\alpha+\|\mathbf{F}\|_{1,\infty})^2
	\|\partial_t^{1-\alpha}u\|_{L^2(0,T;H^2)}^2
	+\alpha^2\|\mathbf{F}'\|_{1,\infty}^2\|J^\alpha u\|_{L^2(0,T,H^1)}^2\Bigr),
\]
with Lemma~\ref{lem: priori weak1}~and Theorem~\ref{the:exit} implying that
\[
\|\bar{G}\|_{L^2}^2\le C\bigl(\|u_0\|_{H^2(\Omega)}^2+\|g\|_{L^2}^2
	+\|g_1\|_{L^2}^2\bigr) \ \text{ for some constant }C.
\]
Thus, applying Theorem~\ref{the:exit} to equation~\eqref{eq:prob3} with initial 
data $z(0) = 0$, we deduce the bound~\eqref{eq:zbound}.
\end{proof}
From Theorem~\ref{the:exit}, for almost every $(t,x)\in (0,T)\times\Omega$ we 
have the identity
\begin{equation}\label{eq:ProSing}
u_t -\nabla\cdot(\partial_t^{1-\alpha}\kappa_\alpha\nabla u)) = f,
\end{equation}
where $f:= g-\nabla\cdot(\textbf{F}\partial_t^{1-\alpha}u)\in L^2(0,T;H^1)$. 
The regularity of solutions to problem~\eqref{eq:ProSing} subject to the initial 
condition $u_0\in H^2(\Omega) \cap H_0^1(\Omega)$ was studied 
in~\cite{Bill2010}. In order to apply~\cite[Theorem 5.7]{Bill2010}, we need at 
least an upper bound for $\int_0^t s\|f'(s)\|ds $ which is proved in the 
following lemma. Here and subsequently, notation such as $f'$ and $f^{(j)}$ 
indicates time derivatives, and we denote higher-order fractional derivatives by
$\partial^{j-\alpha}_tu:=\partial_t^{j-1}\partial_t^{1-\alpha}u
=(J^\alpha u)^{(j)}$ for~$j\in\{1,2,3,\ldots\}$ and~$0<\alpha<1$.  
\begin{lemma}\label{lem:f}
Let $u$ be the solution of~\eqref{prob}~and 
$f:=g-\nabla\cdot(\textbf{F}\partial_t^{1-\alpha}u)$. 
Then, for $q\in \{0,1,2,\dots\}$, $\textbf{F}\in W^{q,\infty}(0,T;L^2(\Omega))$, and for any $t\in(0,T]$,
there is a constant $C = C(\Omega, \kappa_\alpha, \textbf{F}, q)$ such that
\begin{equation}\label{eq:f1}
\int_0^t s^{2q}\|f^{(q)}(s)\|^2\,ds
	\le C\biggl(\|u_0\|_{H^2(\Omega)}^2+\sum_{j=0}^q
		\int_0^t s^{2j}\|g^{(j)}(s)\|^2\,ds\biggr).
\end{equation}
\end{lemma}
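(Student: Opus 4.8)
The plan is to prove the estimate~\eqref{eq:f1} by induction on~$q$, since the structure of the inequality---involving $g^{(j)}$ for $0\le j\le q$---strongly suggests a recursive argument in which the bound at level~$q$ feeds into the bound at level~$q+1$. For the base case $q=0$, the claim reduces to $\int_0^t\|f(s)\|^2\,ds\le C(\|u_0\|_{H^2(\Omega)}^2+\int_0^t\|g(s)\|^2\,ds)$; since $f=g-\nabla\cdot(\textbf{F}\partial_t^{1-\alpha}u)$, I would use Lemma~\ref{lem:Fphi} to bound $\|\nabla\cdot(\textbf{F}\partial_t^{1-\alpha}u(s))\|\le\|\textbf{F}\|_{1,\infty}\|\partial_t^{1-\alpha}u(s)\|_{H^1(\Omega)}$, and then absorb the right-hand side using the bound on $\|\partial_t^{1-\alpha}u\|_{L^2(0,T;H^2)}$ from Theorem~\ref{the:exit} (which controls it by $\|u_0\|_{H^2(\Omega)}^2+\|g\|_{L^2}^2$).

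For the inductive step, I would differentiate $f$ exactly $q$ times. Writing $\partial_t^{1-\alpha}u=(J^\alpha u)'=J^\alpha u'+u_0\omega_\alpha(t)$, one has $f=g-\nabla\cdot(\textbf{F}J^\alpha u')-\omega_\alpha(t)\nabla\cdot(\textbf{F}u_0)$. Applying the Leibniz rule for $\partial_t^q$, the key term is $\partial_t^q(\textbf{F}J^\alpha u')=\sum_{i=0}^q\binom{q}{i}\textbf{F}^{(q-i)}(J^\alpha u')^{(i)}=\sum_{i=0}^q\binom{q}{i}\textbf{F}^{(q-i)}\partial_t^{(i+1)-\alpha}u$ (using the notation $\partial_t^{j-\alpha}u=(J^\alpha u)^{(j)}$ introduced just above). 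The weighted term $\omega_\alpha(t)\nabla\cdot(\textbf{F}u_0)$ contributes, after $q$ differentiations, terms of the form $\omega_\alpha^{(q-i)}(t)\nabla\cdot(\textbf{F}^{(i)}u_0)$, i.e. expressions behaving like $t^{\alpha-1-(q-i)}$ near $t=0$; multiplying by $s^{q}$ as in~\eqref{eq:f1} restores integrability since $2(\alpha-1-(q-i))+2q\ge 2(\alpha-1)>-2$ when $\alpha>1/2$. Multiplying $f^{(q)}$ by $s^q$, invoking Lemma~\ref{lem:Fphi} on each $\nabla\cdot(\textbf{F}^{(q-i)}\,\cdot\,)$ term, and using $s^q=s^{i}s^{q-i}$ with $s^{q-i}\le T^{q-i}$, I would reduce the estimate of $\int_0^t s^{2q}\|f^{(q)}\|^2\,ds$ to bounding quantities of the form $\int_0^t s^{2i}\|\partial_t^{(i+1)-\alpha}u(s)\|_{H^1(\Omega)}^2\,ds$ for $0\le i\le q$, together with $\int_0^t s^{2q}\|g^{(q)}\|^2\,ds$.

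The crux is therefore to control $\int_0^t s^{2i}\|\partial_t^{(i+1)-\alpha}u(s)\|_{H^1(\Omega)}^2\,ds$ for $i\ge 1$ in terms of data. This is where the auxiliary function $z=tu'$ of Lemma~\ref{lem:reg z} does the work at level $i=1$: since $\partial_t^{2-\alpha}u=\partial_t(J^\alpha u')+\ldots$ and $z=tu'$, one relates $s\,\partial_t^{2-\alpha}u$ to $\partial_t^{1-\alpha}z$ plus lower-order terms via the identity~\eqref{eq:inter}, and then applies the bound $\|\partial_t^{1-\alpha}z\|_{L^2(0,T;H^2)}^2\le C_{13}(\|u_0\|_{H^2(\Omega)}^2+\|g\|_{L^2}^2+\|g_1\|_{L^2}^2)$ with $g_1=tg'$, noting $\|g_1\|_{L^2}^2=\int_0^T s^2\|g'(s)\|^2\,ds$ is exactly the $j=1$ term on the right of~\eqref{eq:f1}. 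For general $i$, I anticipate one needs an iterated version of this device---higher-order analogues $z_i(t)$ roughly like $t^i u^{(i)}(t)$ (or $t\cdot\partial_t z_{i-1}$), each satisfying an equation of the same type~\eqref{eq:prob3} with a forcing built from $g,tg',\ldots,t^ig^{(i)}$ and from lower-order $\partial_t^{j-\alpha}u$ already controlled---so that Theorem~\ref{the:exit} applies repeatedly. The main obstacle I expect is the bookkeeping of these weighted higher-order auxiliary functions: verifying that $z_i(0)=0$, that its source term indeed has the claimed $L^2$-norm bound after using the inductive hypothesis, and that the singular factors $\omega_\alpha^{(j)}(t)\sim t^{\alpha-1-j}$ arising from the initial-data term are tamed by the $s^{2q}$ weight precisely because $\alpha>1/2$ (cf.\ Assumption~\ref{ass:12}). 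Once the family $\{z_i\}$ is in place, assembling~\eqref{eq:f1} is routine: collect the contributions, use $s^{q-i}\le T^{q-i}$ and the binomial coefficients into the generic constant $C=C(\Omega,\kappa_\alpha,\textbf{F},q)$, and invoke the induction hypothesis for the terms with $j<q$.
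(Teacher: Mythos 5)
Your proposal follows essentially the same route as the paper: the case $q=0$ from Lemma~\ref{lem:Fphi} and Theorem~\ref{the:exit}, the higher derivatives of $f$ reduced via the Leibniz rule to weighted bounds on $\partial_t^{(i+1)-\alpha}u$, and those bounds obtained from the auxiliary function $z=tu'$ through the identity~\eqref{eq:inter} and Lemma~\ref{lem:reg z}, iterated for larger $q$ by introducing higher-order analogues of $z$ satisfying equations of the form~\eqref{eq:prob3} to which Theorem~\ref{the:exit} applies (the paper likewise only carries this out explicitly for $q\le2$ and asserts the general case by iteration). Only minor bookkeeping slips: $(J^\alpha u')^{(i)}=\partial_t^{(i+1)-\alpha}u-u_0\omega_\alpha^{(i)}$, so identifying the two while also keeping the separated $\omega_\alpha\nabla\cdot(\textbf{F}u_0)$ term double-counts that contribution, and the integrability threshold for $\int_0^t s^{p}\,ds$ is $p>-1$ rather than $p>-2$ --- neither affects the argument.
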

\begin{proof}
Inequality~\eqref{eq:f1} holds for $q=0$ by virtue of~\eqref{eq:ubound} (with~$t$ playing the role of~$T$) because
\[
\|f(t)\|^2\le C\bigl(\|g(t)\|^2
	+\|\partial_t^{1-\alpha}u\|_{H^1(\Omega)}\bigr)^2.
\]
For the case~$q=1$, we note first that
\begin{align*}
t^2\|f'(t)\|^2&=t^2\bigl\|
g'(t)-\nabla\cdot\bigl(\mathbf{F}'(t)\partial_t^{1-\alpha}u
+\mathbf{F}(t)\partial_t^{2-\alpha}u\bigr)\bigr\|^2\\
	&\le Ct^2\bigl(\|g'(t)\|^2
	+\|\partial_t^{1-\alpha}u\|_{H^1(\Omega)}^2
+\|\partial_t^{2-\alpha}u\|_{H^1(\Omega)}^2\bigr).
\end{align*}
By~\eqref{eq:inter} we have
$(J^\alpha z)(t)=t(J^\alpha u')(t)-\alpha(J^{\alpha+1}u')(t)$,
and differentiating with respect to~$t$ gives
\[
\partial_t^{1-\alpha}z=t(J^\alpha u')'(t)-(\alpha-1)(J^\alpha u')(t) 
	=t\partial_t^{2-\alpha}u-(\alpha-1)\partial_t^{1-\alpha}u,
\]
where we used the identities 
$(J^\alpha u')(t)=\partial_t^{1-\alpha}u-u_0\omega_\alpha(t)$~and 
$(\alpha-1)\omega_\alpha(t)=t\omega_{\alpha-1}(t)$.  Thus,
\begin{equation}\label{eq:uz}
t\partial_t^{2-\alpha} u = 
\partial_t^{1-\alpha} z + (\alpha-1)\partial_t^{1-\alpha} u.
\end{equation}
Hence, by Theorem~\ref{the:exit}~and Lemma~\ref{lem:reg z} (with~$t$ again playing the role of~$T$),
\begin{equation}\label{eq:fu1}
\int_0^t s^2\|\partial_s^{2-\alpha}u\|_{H^2(\Omega)}^2\,ds
	\le C\biggl(\|u_0\|_{H^2(\Omega)}^2+\int_0^t\bigl[\|g(s)\|^2+s^2\|g'(s)\|^2
	\bigr]\,ds\biggr),
\end{equation}
implying that the desired inequality \eqref{eq:f1} holds for~$q=1$. 

Multiply both sides of~\eqref{eq:uz} by $t$ and then differentiate with respect 
to~$t$, obtaining
\begin{align}\label{eq:uz2}
t^2\partial_t^{3-\alpha} u = 
\partial_t^{1-\alpha} z + t\partial_t^{2-\alpha} z 
+  (\alpha-1)\partial_t^{1-\alpha} u 
+ (\alpha-3)t\,\partial_t^{2-\alpha} u.
\end{align}
Since $z$ satisfies~\eqref{eq:prob3} --- an equation similar 
to~\eqref{proba} but with a different source~$\bar G$ and with $z(0)=0$ --- we 
get an estimate for $z$ corresponding to~\eqref{eq:fu1}: 
\begin{equation*}\label{eq:fz1}
\int_0^t s^2\|\partial_s^{2-\alpha} z\|^2_{H^2(\Omega)}\,ds
	\le C\int_0^t\bigl[\|\bar G(s)\|^2+s^2\|\bar G'(s)\|^2\bigr]\,ds.
\end{equation*} 
This inequality, together with~\eqref{eq:ubound}, \eqref{eq:zbound}, \eqref{eq:fu1}~and
\eqref{eq:uz2}, yields
\begin{equation}\label{eq:fu2}
\int_0^t s^4\|\partial_t^{3-\alpha} u\|^2_{H^2(\Omega)}\,ds
	\le C\biggl(\|u_0\|_{H^2(\Omega)}^2+\int_0^t\bigl[\|g(s)\|^2+s^2\|g'(s)\|^2
	+s^4\|g''(s)\|^2\bigr]\,ds\biggr),
\end{equation}
which implies the desired inequality \eqref{eq:f1} for~$q=2$.

The general case follows by iterating the arguments above; cf.~\cite{MMAK2018}.
\end{proof}

We can now prove regularity estimates for the classical solution $u$.

\begin{theorem}\label{th: regu}
Let $g_j(t):=t^jg^{(j)}(t)$ for $j=1,2,3,\dots$ For $q\in \{1,2,3,\dots\}, \textbf{F}\in W^{q,\infty}(0,T;L^2(\Omega))$ and for any 
$t\in(0,T]$, 
\[
t^q\|\Delta u^{(q)}(t)\|\leq Ct^{-(\alpha-1/2)}\biggl(\|u_0\|_{H^2(\Omega)}
	+\sum_{j=0}^{q+1}\|g_j\|_{L^2}\biggr)
\]
and
\[
t^{q}\| u^{(q)}(t)\|\le Ct^{1/2}\biggl(\|u_0\|_{H^2(\Omega)}
	+\sum_{j=0}^q\|g_j\|_{L^2}\biggr).
\]
\end{theorem}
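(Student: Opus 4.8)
The plan is to regard $u$ not as the solution of the full problem~\eqref{prob} but as the solution of the \emph{pure} time-fractional diffusion equation~\eqref{eq:ProSing}, $u_t-\nabla\cdot(\partial_t^{1-\alpha}\kappa_\alpha\nabla u)=f$, with $f=g-\nabla\cdot(\textbf{F}\partial_t^{1-\alpha}u)$ and $u(0)=u_0$, and then to invoke the sharp pointwise-in-time regularity estimates of~\cite[Theorem~5.7]{Bill2010} for that equation. This is legitimate because $u$ is already known to exist and be unique (Theorem~\ref{the:exit}), so $f$ is a fixed function and~\eqref{eq:ProSing} is a genuinely linear problem; moreover $u(0)=u_0\in H^2(\Omega)\cap H^1_0(\Omega)$ since $u=v+u_0$ with $v(0)=0$, so the structural hypotheses of~\cite{Bill2010} are met.

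The estimates of~\cite{Bill2010} bound $t^q\|u^{(q)}(t)\|$ and $t^q\|\Delta u^{(q)}(t)\|$ pointwise in $t$, the latter with an additional factor $t^{-(1-\alpha)}$, in terms of $\|u_0\|_{H^2(\Omega)}$ and of weighted $L^1$-in-time norms $\int_0^t s^j\|f^{(j)}(s)\|\,ds$ of the source and its time derivatives --- exactly the type of quantity whose control was flagged in the remark preceding Lemma~\ref{lem:f}. The first step is a Cauchy--Schwarz inequality in time,
\[
\int_0^t s^j\|f^{(j)}(s)\|\,ds\le t^{1/2}\Bigl(\int_0^t s^{2j}\|f^{(j)}(s)\|^2\,ds\Bigr)^{1/2},
\]
which both converts these into the weighted $L^2$-in-time norms handled by Lemma~\ref{lem:f} and accounts for the factor $t^{1/2}$ in the second assertion of the theorem (and hence, together with the extra $t^{-(1-\alpha)}$, for the factor $t^{-(\alpha-1/2)}=t^{-(1-\alpha)}\,t^{1/2}$ in the first).

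It then remains to insert Lemma~\ref{lem:f}: for $\textbf{F}\in W^{q,\infty}(0,T;L^2(\Omega))$ one has
\[
\int_0^t s^{2j}\|f^{(j)}(s)\|^2\,ds
\le C\Bigl(\|u_0\|_{H^2(\Omega)}^2+\sum_{i=0}^{j}\int_0^t s^{2i}\|g^{(i)}(s)\|^2\,ds\Bigr)
\le C\Bigl(\|u_0\|_{H^2(\Omega)}^2+\sum_{i=0}^{j}\|g_i\|_{L^2}^2\Bigr),
\]
using $g_i(s)=s^ig^{(i)}(s)$ and $\int_0^t\|g_i(s)\|^2\,ds\le\|g_i\|_{L^2}^2$ at the last step. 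Summing over the relevant range of $j$ --- up to $q$ for the $L^2$-estimate and up to $q+1$ for the $H^2$-estimate --- substituting into the bounds of~\cite{Bill2010}, merging the $\|u_0\|_{H^2(\Omega)}$ term produced there into the same right-hand side, and taking square roots yields the two asserted inequalities after absorbing the generic constants.

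I expect the proof itself to be short, so the real work lies at the interface with~\cite{Bill2010}: one has to read off from that reference the precise form of the right-hand sides of its estimates together with the powers of $t$ appearing there, check that the $L^2(0,T;H^1)$ spatial regularity of $f$ meets (or exceeds) its hypotheses, and keep careful track of the number of time derivatives of $g$ required --- one more for the $\|\Delta u^{(q)}(t)\|$ bound than for the $\|u^{(q)}(t)\|$ bound, which is the source of the upper index $q+1$ versus $q$ in the two sums.
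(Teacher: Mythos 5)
Your proposal follows essentially the same route as the paper: treat $u$ as the solution of the pure time-fractional diffusion equation~\eqref{eq:ProSing} with fixed source $f=g-\nabla\cdot(\textbf{F}\partial_t^{1-\alpha}u)$, quote the pointwise-in-time regularity estimates of~\cite{Bill2010} (the paper uses Theorems~4.4, 5.4 and~5.6 there, with suitable parameters), convert $\int_0^t s^j\|f^{(j)}(s)\|\,ds$ to weighted $L^2$ norms by Cauchy--Schwarz, and close with Lemma~\ref{lem:f}. The structure, the role of each ingredient, and the $q$ versus $q+1$ bookkeeping are all as in the paper.

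One concrete slip in your accounting of powers of $t$: you assert that the extra factor in the $\|\Delta u^{(q)}(t)\|$ estimate from~\cite{Bill2010} is $t^{-(1-\alpha)}$ and that $t^{-(\alpha-1/2)}=t^{-(1-\alpha)}\,t^{1/2}$. The latter identity is false: $t^{-(1-\alpha)}\,t^{1/2}=t^{\alpha-1/2}$, which for $\alpha\in(1/2,1)$ tends to $0$ as $t\to0$, whereas the theorem's factor $t^{-(\alpha-1/2)}$ blows up --- so your version would claim a strictly stronger (and unjustified) bound near $t=0$. The factor actually supplied by~\cite{Bill2010}, as used in the paper, is $t^{-\alpha}$ in front of $\sum_{j=0}^{q+1}\int_0^t s^j\|f^{(j)}(s)\|\,ds$; combined with the $t^{1/2}$ from Cauchy--Schwarz this gives $t^{1/2-\alpha}=t^{-(\alpha-1/2)}$ as stated. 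Since you yourself flag the interface with~\cite{Bill2010} as the place where care is needed, this is precisely the detail to fix; everything else in your argument goes through.
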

\begin{proof}
%

By~\eqref{eq:ProSing}, it follows from \cite[Theorem~4.4]{Bill2010} with 
$r=2$~and $\nu=\alpha$, and from \cite[Theorem~5.6]{Bill2010} with 
$r=0$, $\mu=2$~and $\nu=\alpha$, that
\[
t^q\|\Delta u^{(q)}(t)\|\leq C
\bigg(\|u_0\|_{H^2(\Omega)} +t^{-\alpha}
\sum_{j=0}^{q+1}\int_0^t s^j\|f^{(j)}(s)\|\,ds\bigg).
\]
Similarly, from~\cite[Theorem 4.4]{Bill2010} with $r=2$~and $\nu=\alpha$, and 
from \cite[Theorem~5.4]{Bill2010} with $r=\mu=0$,
and
\[
t^q\| u^{(q)}(t)\|\leq C
\bigg(t^\alpha\|u_0\|_{H^2(\Omega)}+
\sum_{j=0}^q\int_0^t s^j\|f^{(j)}(s)\|\,ds\bigg).
\]
The theorem follows by Lemma~\ref{lem:f} since
$\int_0^ts^j\|f^{(j)}(s)\|\,ds\le 
t^{1/2}\bigl(\int_0^ts^{2j}\|f^{(j)}(s)\|^2\,ds\bigr)^{1/2}$.
\end{proof}

\begin{corollary}\label{cor:reg}
Let $\eta>1/2$. If $\|g^{(j)}(t)\| \le Mt^{\eta-1-j}$ for $0\le j\le q+1, \textbf{F}\in W^{q,\infty}(0,T;L^2(\Omega))$~and
$t\in(0,T]$, then
\[
t^q\|\Delta u^{(q)}(t)\| \le C\bigl(t^{-(\alpha-1/2)}\|u_0\|_{H^2(\Omega)}
	+Mt^{\eta-\alpha}\bigr)
\quad\text{and}\quad
t^q\|u^{(q)}(t)\| \le C\bigl(t^{1/2}\|u_0\|_{H^2(\Omega)}
	+Mt^\eta\bigr).
\]
\end{corollary}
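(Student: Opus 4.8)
The plan is to obtain the corollary as a direct specialization of Theorem~\ref{th: regu}, applied with $t$ in the role of~$T$. First I would verify the standing hypotheses needed to invoke that theorem (and, through it, Lemma~\ref{lem:reg z}, Lemma~\ref{lem:f} and Theorem~\ref{the:exit}): from $\|g^{(j)}(t)\|\le Mt^{\eta-1-j}$ with $\eta>1/2$ one gets $g\in L^2(0,T;L^2)$ and $\int_0^T\|tg'(t)\|^2\,dt<\infty$, since each of $\|g(t)\|^2$ and $\|tg'(t)\|^2$ is bounded by a constant multiple of $t^{2\eta-2}$, which is integrable near~$0$ precisely because $\eta>1/2$; the same reasoning shows that all of the integrals $\int_0^t s^{2j}\|g^{(j)}(s)\|^2\,ds$ occurring in Lemma~\ref{lem:f} are finite. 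The requirement $\textbf{F}\in W^{q,\infty}(0,T;L^2(\Omega))$ is assumed outright.

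The one computation of substance is to estimate the norms $\|g_j\|_{L^2(0,t;L^2)}$ that appear on the right-hand sides of Theorem~\ref{th: regu}. Since $g_j(s)=s^jg^{(j)}(s)$, the hypothesis gives $\|g_j(s)\|\le Ms^j\cdot s^{\eta-1-j}=Ms^{\eta-1}$, so
\[
\|g_j\|_{L^2(0,t;L^2)}^2=\int_0^ts^{2j}\|g^{(j)}(s)\|^2\,ds\le M^2\int_0^ts^{2\eta-2}\,ds=\frac{M^2}{2\eta-1}\,t^{2\eta-1},
\]
whence $\|g_j\|_{L^2(0,t;L^2)}\le CMt^{\eta-1/2}$ uniformly for $0\le j\le q+1$ (the number of summands being absorbed into~$C$). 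Feeding this into the two bounds of Theorem~\ref{th: regu}, each read with $t$ in place of~$T$, produces
\[
t^q\|\Delta u^{(q)}(t)\|\le Ct^{-(\alpha-1/2)}\bigl(\|u_0\|_{H^2(\Omega)}+CMt^{\eta-1/2}\bigr)
\]
and
\[
t^q\|u^{(q)}(t)\|\le Ct^{1/2}\bigl(\|u_0\|_{H^2(\Omega)}+CMt^{\eta-1/2}\bigr).
\]
Distributing the prefactors and simplifying the exponents via $t^{-(\alpha-1/2)}t^{\eta-1/2}=t^{\eta-\alpha}$ and $t^{1/2}t^{\eta-1/2}=t^\eta$ gives exactly the two asserted inequalities.

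Since the argument is nothing more than a scaling substitution in Theorem~\ref{th: regu} followed by an elementary integral, I do not expect a genuine obstacle. The only point that deserves a sentence of justification is the legitimacy of applying Theorem~\ref{th: regu} on the shrinking interval $[0,t]$ for each fixed $t\in(0,T]$: this is valid because every \emph{a priori} estimate in Sections~\ref{sec:classical}--\ref{sec:regclassical} was derived with an arbitrary endpoint playing the role of~$T$, so the constants there may be taken uniform over $t\in(0,T]$. If one insists that the generic constant~$C$ not depend on~$\eta$, the factor $(2\eta-1)^{-1/2}$ above must either be displayed explicitly or absorbed into a constant $C=C(\Omega,\kappa_\alpha,\textbf{F},T,q,\eta)$; this is a cosmetic matter.
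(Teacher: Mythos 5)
Your proposal is correct and follows essentially the same route as the paper: the paper's entire proof is the observation that the hypothesis on~$g$ gives $\|g_j\|_{L^2(0,t;L^2)}\le CMt^{\eta-1/2}$ (using $\eta>1/2$ for integrability), which is then substituted into the two bounds of Theorem~\ref{th: regu} read on the interval $(0,t]$. Your additional remarks on verifying the standing hypotheses and on the $\eta$-dependence of the constant are fine but not needed beyond what the paper records.
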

\begin{proof}
The assumption on~$g$ ensures that $\|g_j\| \le Mt^{\eta-1/2}$.
\end{proof}

The alternative and longer analysis in \cite[Theorems 6.2~and 6.3]{MMAK2018} 
shows that these bounds can be improved to
\[
t^q\|\Delta u^{(q)}(t)\|\le 
C\bigl(\|u_0\|_{H^2(\Omega)}+Mt^{\eta-\alpha}\bigr)
\quad\text{and}\quad
t^q\|u^{(q)}(t)\|\le C\bigl(t^\alpha\|u_0\|_{H^2(\Omega)}+Mt^\eta\bigr),
\]
for any $\alpha\in(0,1)$~and $\eta>0$.  

\providecommand{\href}[2]{#2}
\providecommand{\arxiv}[1]{\href{http://arxiv.org/abs/#1}{arXiv:#1}}
\providecommand{\url}[1]{\texttt{#1}}
\providecommand{\urlprefix}{URL }

\end{document}